\newcommand{\dd}{\mathrm{d}}
\newtheorem{lemma}{Lemma}[section]
\newtheorem{theorem}{Theorem}[section]
\newtheorem{claim}{Claim}[section]
\newtheorem*{theorem*}{Theorem}
\theoremstyle{definition}
\theoremstyle{remark}
\newtheorem{remark}{Remark}[section]
\title{Fast expansion into harmonics on the ball}
\author[J. Kileel]{Joe Kileel}
\address{Department of Mathematics, University of Texas at Austin, Austin, TX 78712 USA}
\email{jkileel@math.utexas.edu}
\author[N.F. Marshall]{Nicholas F. Marshall}
\address{Department of Mathematics, Oregon State University, Corvallis, OR 97330 USA}
\email{marsnich@oregonstate.edu}
\author[O. Mickelin]{Oscar Mickelin}
\address{Program in Applied and Computational Mathematics, Princeton University, Princeton, NJ 08540 USA}
\email{hm6655@princeton.edu}
\author[A. Singer]{Amit Singer}
\address{Department of Mathematics and Program in Applied and Computational Mathematics, Princeton University, Princeton, NJ 08540 USA}
\email{amits@math.princeton.edu}
\thanks{Code accompanying this paper is available at \url{https://github.com/oscarmickelin/fle_3d}}
\begin{document}

\begin{abstract}
We devise fast and provably accurate algorithms to transform between an
$N\times N \times N$ Cartesian
voxel representation of a three-dimensional function
and its expansion into the {ball harmonics}, that is, the eigenbasis of the Dirichlet Laplacian on the unit ball in $\mathbb{R}^3$. Given $\varepsilon > 0$, our algorithms achieve relative $\ell^1$ - $\ell^\infty$ accuracy $\varepsilon$ in time $\mathcal{O}(N^3 (\log N)^2 + N^3 |\log \varepsilon|^2)$, while the na\"{i}ve direct application of the expansion operators has time complexity $\mathcal{O}(N^6)$. We illustrate our methods on numerical examples.
\end{abstract}

\keywords{Laplacian eigenfunctions, spherical Bessel, spherical harmonics, fast transforms}

\subjclass{65R10, 65D18, 42-04, 33C10, 33C55}

\maketitle

\section{Introduction}
We consider the problem of expanding discretized functions $f:[-1,1]^3 \rightarrow \mathbb{C}$ supported on the unit ball $\mathbb{B} = \{ x \in \mathbb{R}^3 : \|x\|_{ \ell^2} \leq 1\}$
into the Dirichlet Laplacian eigenfunctions on the unit ball. 
Given samples of the function $f$ on an $N \times N \times N$ Cartesian grid (we refer to such a discretization as a \emph{volume}), we seek a fast method to obtain the corresponding eigenbasis expansion coefficients. Conversely, given its expansion coefficients we also want a fast method to evaluate the corresponding function $f$ on the grid.
The Dirichlet Laplacian eigenfunctions can be written in spherical coordinates $(r,\theta,\phi)$ as
\begin{equation}\label{eq:sph_bessel1}
\psi_{k, \ell, m}(r, \theta , \phi) = c_{\ell k}j_\ell(\lambda_{\ell k} r)  Y_{\ell}^m(\theta,\phi)
\chi_{[0,1)}(r),
\end{equation}
for $ m \in \{-\ell,\ldots,\ell\}$, $ \ell \in \mathbb{Z}_{\ge 0}$, and $ k \in \mathbb{Z}_{>0}$, where the $c_{\ell k}$ are positive normalization constants,
$j_\ell$ are spherical Bessel functions of the first kind,
 $\lambda_{\ell k}$ is the $k$-th positive root of $j_\ell$,
 $Y_{\ell}^m(\theta,\phi)$ are spherical harmonics and $\chi_{[0,1)}$ is an indicator function for $[0,1)$, which extends the functions to $\mathbb{R}^3$; for precise definitions, 
see Appendix~\ref{notation}. 
These eigenfunctions are solutions to the eigenvalue problem
\begin{align}
-\Delta \psi &= \lambda^2 \psi \, \quad 
  \text{in } \mathbb{B}, \\
\psi &= 0 \qquad \,\, \text{on } \partial \mathbb{B},
\end{align}
where $\Delta$ is the Laplacian, see \S \ref{sec:lapeigedirdef}. These functions are the \emph{ball harmonics} in the sense that they are the standing waves associated with the resonant frequencies of the ball with fixed Dirichlet boundary conditions.

The ball harmonics
have a number of beneficial numerical properties, namely they are \emph{orthonormal}, \emph{frequency-ordered}, and \emph{steerable}.
Orthonormality stems from the eigenfunction formulation and the fact that the Laplacian with Dirichlet boundary conditions is a self-adjoint operator.  
The frequency-ordering property arises by ordering the eigenfunctions according to their real eigenvalue.
It allows one to perform low-pass filtering on volumes expanded into the basis, by retaining the expansion coefficients corresponding to the lowest frequencies and setting the remaining coefficients to zero. Thirdly, the steerability property means that rotations of the basis functions can be efficiently obtained as closed-form linear combinations of the basis functions via Wigner D-matrices \cite[Eq.~(9.49)]{chirikjian2016harmonic}. This allows for lossless rotation of volumes expanded into the basis. We illustrate the low-passing property in Figure~\ref{fig:downsampling}.

\begin{figure}
    \centering
    \includegraphics[width=0.95\textwidth]{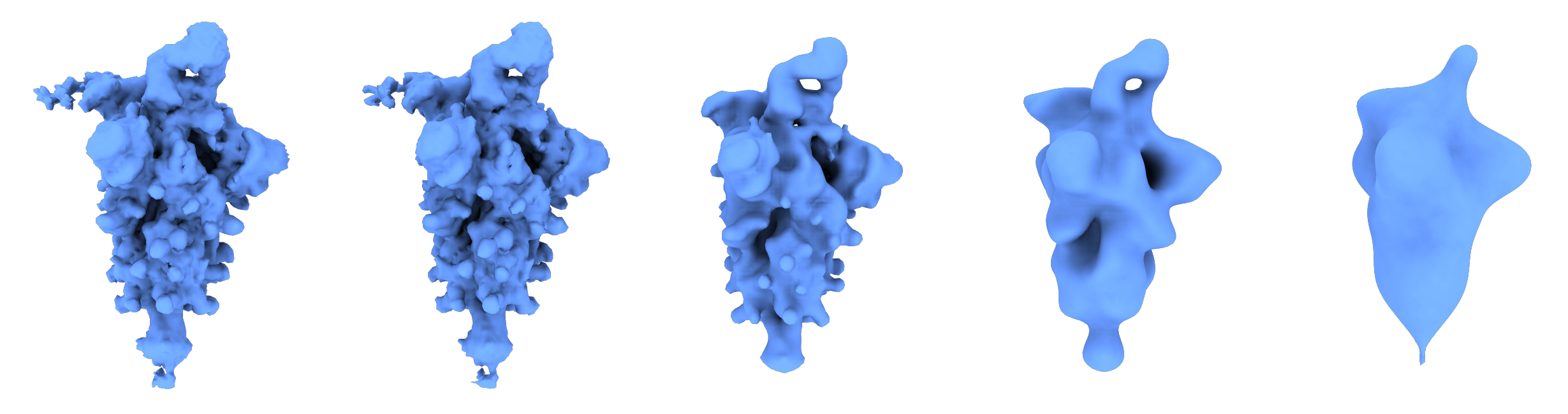}
    \caption{Illustration of low-pass filtering in the ball harmonics basis $\psi_{k,\ell,m}$ for volumes of size $128\times 128 \times 128$. The ground truth volume (leftmost panel) is a 3D density map of the SARS-CoV-2 Omicron spike glycoprotein complex  \cite{guo2022structures} downloaded from the online electron microscopy data bank~\cite{lawson2016emdatabank}. Subsequent panels expand the ground truth volume in ball harmonics and decrease the number of basis functions by dividing the bandlimit by successive factors of $2$, so by retaining the basis functions with corresponding $\lambda_{\ell k}$ at most 201.06, 100.53, 50.21, 25.10, respectively, and use $564641$, $69545$, $8253$ and $1007$ basis functions, respectively. The volumes are rendered using UCSF ChimeraX \cite{goddard2018ucsf}.
    }
    \label{fig:downsampling}
\end{figure}

\subsection{Directly related works}
Separable basis functions of the form in \eqref{eq:sph_bessel1} with spherical harmonics being the angular components play a crucial role in several imaging techniques, such as cryo-electron microscopy and fluctuation X-ray scattering for 3D reconstruction of molecules \cite{kam1980reconstruction,donatelli2015iterative,bendory2023autocorrelation}. In these inverse problems, experimentally computable quantities provide partial information about the expansion of the solution into such a basis. These hinge on using a basis that is closed under rotations, which by the Peter-Weyl theorem requires angular components represented by spherical harmonics \cite[Thm.~8.13]{chirikjian2016harmonic}. These basis functions are also a natural choice in other three-dimensional data processing applications, such as geophysics \cite{lowrie2020fundamentals}, quantum mechanics \cite{banerjee2015spectral}, graph neural networks \cite{gasteiger2020directional}, neuroimaging \cite{galinsky2014automated}, the solution of partial differential equations \cite{gumerov2009broadband}, cosmology \cite{abramo2010cmb,lanusse2012spherical,leistedt20123dex,rassat20123d}, certain types of classification problems \cite{wang2009rotational}, acoustical applications generating personalized spatial sound \cite{politis2016applications}, and rotational alignment of three-dimensional objects \cite{kovacs2002fast}. In such applications, however, there is freedom in choosing the radial basis functions, and several possible candidates exist. They include Laguerre basis functions \cite{leistedt2012exact,mcewen2013fourier,leistedt2013flaglets,wulker2020fast}, Chebyshev polynomials \cite{hollerbach2000spectral}, and prolate spheroidal wave functions (also known as Slepian functions) \cite{sharon2020method}.

In many prior works and  applications, it is assumed that samples of the 3D function $f$ are available on a spherical grid. Since the basis functions in \eqref{eq:sph_bessel1} are separable on a spherical grid, product quadrature can then be used to efficiently compute basis coefficients. Importantly, the setting of the present paper is different, in that we assume access to samples of $f$ only on a Cartesian grid. The separability of the basis functions can therefore not be exploited a priori. That said, the case of radial Laguerre functions (instead of spherical Bessel functions as in this paper) on an unstructured (and not necessarily spherical) grid was previously studied  \cite{wulker2020fast}. In our setting of an $N\times N \times N$ Cartesian grid, the computational complexity of the approach of \cite{wulker2020fast} would be $\mathcal{O}( N^{(6-3/e)}) \approx \mathcal{O}( N^{4.9})$ (see \cite[Thm.~7.1 and the last row of Table 1]{wulker2020fast}), which is significantly higher than that of this article. Overall we are not aware of  any provably accurate expansion algorithms on Cartesian grids with time complexity comparable to the algorithms of this paper.

To our best knowledge, the  approach taken in this paper is most closely related to the techniques of \cite{salehin2010photoacoustic,akramus2012frequency}. However, those papers differ from ours in critical aspects.  Specifically they: i) deal with a different problem by solving inverse problems arising from the inhomogeneous Helmholtz equation; ii) assume access to functions on a spherical grid; and iii) are heuristic in nature and do not prove accuracy guarantees of their algorithms. Indeed, besides operating on a Cartesian grid, a major contribution of this article is in proving accuracy guarantees of our basis expansion method. 
This is done by showing that certain analytical identities for continuous functions $f$ have discrete counterparts when using only samples of $f$. We show that these discrete counterparts can be interpreted as computing inner products of the samples with all basis functions and, moreover, that they can be evaluated quickly and accurately. The approach in this paper may also have extensions to establishing rigorous fast methods for  \cite{leistedt2012exact,mcewen2013fourier,leistedt2013flaglets}, which use some similar analytical formulas.

All together the present article thus provides a rigorous argument for choosing the radial basis functions in \eqref{eq:sph_bessel1}, because we contribute fast and accurate basis expansion algorithms for the choice of ball harmonics. As mentioned, the methods have precise accuracy guarantees, and they are linear in the input size (up to logarithmic factors). 

In passing, we mention that there are other approaches to function representations on the ball not based on spherical harmonics, which do not enjoy the closure under arbitrary rotations property due to the Peter-Weyl theorem.
These include products of Chebyshev polynomials in the radial direction and Fourier modes in the two spherical angular variables in the so-called Double Fourier Sphere method, see \cite{boulle2020computing} for more.

\subsection{Connection to 2D work} The three-dimensional functions in \eqref{eq:sph_bessel1} are closely related to a certain set of basis functions in two variables: the eigenfunctions of the Dirichlet Laplacian on the unit disk in $\mathbb{R}^2$. A subset of the authors previously devised fast algorithms for basis expansion into this two-dimensional basis \cite{marshall2023fast}. The techniques of the present article extend those in \cite{marshall2023fast} to 3D.
In doing so, this paper overcomes several significant technical challenges new to the 3D case:
\begin{itemize}
\item Firstly, in 2D, the accuracy analysis uses well-known results about the uniform quadrature rule on the circle. In contrast, in 3D, the accuracy analysis involves careful estimates for quadrature grids on the sphere, and a number of special function identities.
The error estimates are critical for determining a sufficient number of discretization points to guarantee the 3D algorithms are both fast and accurate.
\item Secondly, in 2D, the non-radial computation is handled by the 
Fast Fourier Transform, which is algebraic, exact and fast in practice.  This simplifies both the error analysis and practical implementation. 
In contrast, in 3D, the algorithms herein use fast approximate spherical harmonics transforms, which introduce additional sources of error as well as implementation tradeoffs between asymptotically fast versus practically fast implementations.
\item Thirdly, the increased dimensionality of the problem in 3D compared to 2D poses implementation and computational complexity challenges.
\end{itemize}

\subsection{Conventions}
Our work includes expository supplementary material, which precisely describes the notation used in the main paper. 
While we try to use standard notation throughout, conventions for special functions and operators vary across mathematical communities, which can lead to ambiguity.
In particular, Appendix~\ref{notation} sets conventions for spherical coordinates in \S \ref{sec:sphericalcoor}, 
spherical Bessel functions in \S \ref{sec:spherebessel},
spherical harmonics in \S \ref{sec:sphereharmonic}, 
ball harmonics in \S \ref{sec:lapeigedirdef},
quadrature rules on $\mathbb{S}^2$ in \S \ref{Clenshaw-Curtis-sec},
spherical harmonic transforms in \S \ref{sec:spherehartransform},
and
fast spherical harmonic transforms in \S 
\ref{sec:fastsphehartransform}.

\section{Main result} \label{sec:main-informal}
{
\subsection{Setup}\label{sec:setup}
Let $f= (f_{j_1,j_2,j_3})$ be an $N \times N \times N$ volume whose entries represent function values at the points
$$
x_{j_1 j_2 j_3} = (hj_1 -1, hj_2 - 1, hj_3 -1), \quad \text{where} \quad h = 1/\lfloor (N+1)/2 \rfloor,
$$
for $j_1,j_2,j_3 \in \{0,\ldots,N-1\}$. More precisely, we assume there is an underlying function (denoted by the same letter, by abuse of notation) ${f} : [-1,1]^3 \rightarrow \mathbb{C}$, which is supported on the unit ball $\mathbb{B}$, such that $f_{j_1 j_2 j_3} = {f}(x_{j_1 j_2 j_3})$.  Let 
\begin{equation} \label{eq:f-list}
x_1, \ldots , x_V \quad \text{ and } \quad f_1, \ldots , f_V
\end{equation}
be an enumeration of $(x_{j_1 j_2 j_3})$ and $(f_{j_1 j_2 j_3})$, respectively, where $V = N^3$ is the number of voxels in the volume $f$. For a given bandlimit $\lambda >0$, let 
\begin{equation}\label{eq:psi-list}
\psi_1,\ldots,\psi_n
\end{equation}
be an enumeration of the functions $\{ \psi_{k,\ell,m} : \lambda_{k,\ell,m} \le \lambda\}$, where the ball harmonics $\psi_{k,\ell,m}$ and constants $\lambda_{k,\ell,m}$ introduced in \eqref{eq:sph_bessel1} are precisely detailed in \S \ref{notation}, and the choice of the bandlimit $\lambda > 0$ is discussed in \S \ref{sec:bandlimit}.

\subsection{Ball harmonics transforms}
We are interested in devising fast and accurate methods for applying the linear operator $B: \mathbb{C}^n \rightarrow \mathbb{C}^V$ defined by
\begin{align}
    (B \alpha)_j &= \sum_{i=1}^n \alpha_i \psi_i(x_j)h^{3/2}, \quad \text{ for } j=1, \ldots ,V, \label{eq:defB}
\end{align}
which maps expansion coefficients to voxelated volumes, and its adjoint $B^*: \mathbb{C}^V \rightarrow \mathbb{C}^n$ defined by
\begin{align}
    (B^* f)_i &= \sum_{j=1}^V f_j \overline{\psi_i(x_j)}h^{3/2}, \quad \text{ for } i=1, \ldots ,n,\label{eq:defB*}
\end{align}
 which maps voxelated volumes to expansion coefficients. 

 \begin{remark}
In \eqref{eq:defB} and \eqref{eq:defB*}, the normalization constant $h^{3/2}$ is included to make the operator $B^*B$ close to the identity. Observe that
$$
(B^*B)_{i,j} = \sum_{k=1}^{V} \overline{\psi_i(x_k)} \psi_j(x_k) h^3.
$$
Since $h^3$ is the volume of a voxel, the right-hand side is a Riemann sum for the inner product between $\psi_i$ and $\psi_j$. Therefore, since the functions $\psi_i$ are orthonormal, it approximately equals $1$ when $i=j$ and $0$ otherwise; more precisely, if the number of basis functions $n$ is fixed, then $B^* B \to I$ as $V \rightarrow \infty$. See Remark~\ref{rmkbandlimith} and Figure~\ref{fig:condb} for further discussion.
\end{remark}

\subsection{Main analytic result}\label{subsec:main-inf}

Our main analytic result describes the computational complexity of new fast algorithms 
(see Algorithms~\ref{algoBH} and \ref{algoB} below) for applying the operators $B$ and $B^*$. The main result is informally stated as follows.

\begin{theorem*}[Informal Statement] 
Assume $n = \mathcal{O}(V)$.
Using the algorithms described in this paper, we can apply the operators $B$ and $B^*$ defined in \eqref{eq:defB} and \eqref{eq:defB*} with relative error $\varepsilon$ 
in  $\mathcal{O}(V(\log V)^2 + V |\log \varepsilon |^2) $ operations.
\end{theorem*}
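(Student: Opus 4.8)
The plan is to exploit an exact analytic identity that separates each ball harmonic into an angular spherical harmonic and a radial plane-wave integral over a single frequency sphere, then to recognize the discretized operators $B$ and $B^{*}$ as compositions of a non-uniform FFT, fast spherical-harmonic transforms (\S\ref{sec:fastsphehartransform}), and a one-dimensional radial transform, each of which is individually fast.

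First I would record the key identity. Starting from the plane-wave expansion $e^{i\xi\cdot x}=4\pi\sum_{\ell,m} i^{\ell} j_\ell(|\xi|\,|x|)\,\overline{Y_\ell^m(\hat\xi)}\,Y_\ell^m(\hat x)$ and integrating a single plane wave of frequency radius $\lambda_{\ell k}$ against $Y_\ell^m$ over the sphere, orthonormality of the $Y_\ell^m$ isolates one term and yields, for $|x|<1$,
\begin{equation}
\psi_{k,\ell,m}(x)\;=\;\frac{c_{\ell k}}{4\pi\, i^{\ell}}\int_{\mathbb{S}^2} e^{\,i\lambda_{\ell k}\,\omega\cdot x}\,Y_\ell^m(\omega)\,\dd\omega,\qquad |x|<1.
\end{equation}
Substituting this into \eqref{eq:defB*} and interchanging the finite sum over voxels with the sphere integral shows that $(B^{*}f)_{k,\ell,m}$ equals, up to the explicit scalar $c_{\ell k}h^{3/2}/(4\pi(-i)^{\ell})$, the degree-$(\ell,m)$ spherical-harmonic coefficient of $\omega\mapsto\hat f(\lambda_{\ell k}\omega)$, where $\hat f(\xi):=\sum_{j=1}^{V} f_j\,e^{-i\xi\cdot x_j}$. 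Because $f$ is supported on $\mathbb{B}$, only voxels with $|x_j|<1$ contribute, exactly where the identity is valid, so this reformulation is exact; all approximation is deferred to the numerical stages. This is the discrete counterpart of the continuous identity advertised in the introduction.

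This representation factors $B^{*}$ into three fast stages, and $B$ is obtained by running their adjoints in reverse order. First I would evaluate $\hat f$ on a spherical frequency grid of $\mathcal{O}(N)$ radii $\rho_1,\dots,\rho_R$ and an $\mathcal{O}(N^2)$-node angular rule on $\mathbb{S}^2$ (the Gauss--Legendre $\times$ uniform rule of \S\ref{Clenshaw-Curtis-sec}); since the total number of target frequencies is $\mathcal{O}(N^3)=\mathcal{O}(V)$, a single non-uniform FFT computes all of them in $\mathcal{O}(V\log V+V|\log\varepsilon|^2)$ operations, the $|\log\varepsilon|$-dependence coming from the spreading width required to reach tolerance $\varepsilon$. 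Next I would apply a fast spherical-harmonic transform on each of the $\mathcal{O}(N)$ spheres to obtain $[\hat f]_{\ell m}(\rho_s)$ for all $\ell,m$; at $\mathcal{O}(N^2(\log N)^2)$ per sphere this totals $\mathcal{O}(V(\log V)^2)$. Finally, for each fixed $(\ell,m)$ I would transfer the coefficients from the uniform radii $\rho_s$ to the Bessel-root radii $\lambda_{\ell k}$ by a local radial interpolation of stencil width $\mathcal{O}(|\log\varepsilon|)$, contributing $\mathcal{O}(V|\log\varepsilon|)$, followed by the diagonal scaling. Summing these gives the claimed $\mathcal{O}(V(\log V)^2+V|\log\varepsilon|^2)$, using $n=\mathcal{O}(V)$ so that all coefficient arrays have $\mathcal{O}(V)$ entries.

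The hard part will be twofold. Analytically, one must compose the three independent error sources — the angular quadrature error on $\mathbb{S}^2$, the radial interpolation error, and the non-uniform FFT tolerance — into a single relative $\ell^1$--$\ell^\infty$ bound $\varepsilon$; this needs quantitative smoothness control of $\omega\mapsto\hat f(\rho\omega)$ and of $\rho\mapsto[\hat f]_{\ell m}(\rho)$, which follow from the fact that these are restrictions of the Fourier transform of data supported in the unit ball (hence essentially bandlimited with band $\sim 1$, so resolved by $\mathcal{O}(N)$ uniform radii and interpolable to the roots $\lambda_{\ell k}$), together with the near-isometry $B^{*}B\approx I$ from the remark following \eqref{eq:defB*} to convert operator error into relative coefficient error. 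Combinatorially, the crux is verifying that one shared spherical grid simultaneously resolves every basis function up to bandlimit $\lambda=\mathcal{O}(N)$, so that the target count stays $\mathcal{O}(V)$ rather than the naive $\mathcal{O}(N^4)$ obtained by attaching a separate sphere to each $(\ell,k)$; this rests on the sphere-quadrature-order estimates flagged in the introduction as new to the 3D setting, and is where the bulk of the technical work will lie.
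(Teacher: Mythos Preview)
Your proposal is correct and takes essentially the same approach as the paper: the same plane-wave identity (your displayed equation is exactly \eqref{eq:exact_integral} in the paper's analysis), the same three-stage factorization into NUFFT, fast spherical-harmonic transforms, and radial interpolation, and the same complexity accounting. Two minor deviations are inessential---the paper uses Chebyshev radial nodes with global polynomial interpolation (Lemma~\ref{lem:num_radial_nodes}) rather than uniform nodes with local stencils, and Clenshaw--Curtis rather than Gauss--Legendre on the sphere---but one point in your plan should be dropped: you do \emph{not} need the near-isometry $B^{*}B\approx I$. The theorem is purely a statement about approximating the matrix--vector products $B\alpha$ and $B^{*}f$ themselves in the $\ell^1$--$\ell^\infty$ sense, and the paper proves this directly by bounding the discretization and fast-transform errors entrywise (Lemmas~\ref{lem:num_radial_nodes}--\ref{lem:num_angular_nodes} feeding into Claims~\ref{claim1}--\ref{claim3}); invoking the near-isometry would be superfluous and suggests a slight confusion between ``approximating $B^{*}f$'' and ``computing true expansion coefficients.''
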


See Theorem \ref{thm:main} for a precise statement of this result. The fast algorithms for applying $B^*$ and $B$ are detailed in   Algorithms~\ref{algoBH} and \ref{algoB}, respectively.

\section{Algorithms}\label{sec:algorithms}
This section presents the paper's fast algorithms for applying approximations to the operators $B^*$ and $B$ in Algorithms~\ref{algoBH}--\ref{algoB}, respectively. 
We start by giving a precise statement of our main analytic result, which provides error guarantees and the computational complexity for the algorithms.

\begin{theorem}\label{thm:main}
Assume $\lambda \leq 6^{1/3}\pi^{2/3} \lfloor (V^{1/3}+1)/2 \rfloor$ and $\varepsilon >0$ is a user-specified  parameter satisfying $ |\log_2 \varepsilon| \leq 5.3V^{1/3}$. Then Algorithms~\ref{algoBH}--\ref{algoB} take $\mathcal{O}(V(\log V)^2 + V |\log \varepsilon |^2) $ operations and their outputs  $\widetilde{B}^* f$ and 
$\widetilde{B} \alpha $ satisfy the accuracy bounds 
\begin{equation}\label{eq:main-bound}
    \| \widetilde{B}^*f - B^*f\|_{\ell^\infty} \leq \varepsilon \|f\|_{\ell^1} \quad \text{ and } \quad  \,   \| \widetilde{B}\alpha - B\alpha\|_{\ell^\infty} \leq \varepsilon \|\alpha\|_{\ell^1} ,
\end{equation}
for all inputs $f$ and $\alpha$. 
\end{theorem}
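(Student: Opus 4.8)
\section*{Proof proposal}

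The plan is to analyze Algorithms~\ref{algoBH} and \ref{algoB} stage by stage and to reduce both accuracy bounds in \eqref{eq:main-bound} to a single entrywise estimate. Each algorithm applies a fixed \emph{linear} operator, being a composition of a nonuniform FFT, a spherical harmonic transform, and a radial interpolation, all linear; write $\widetilde{B}^*$ and $\widetilde{B}$ for these operators. For any matrix $M$ one has $\|Mf\|_{\ell^\infty}\le \|M\|_{\max}\|f\|_{\ell^1}$, where $\|M\|_{\max}$ denotes the largest absolute entry, so the bound $\|\widetilde{B}^*f-B^*f\|_{\ell^\infty}\le \varepsilon\|f\|_{\ell^1}$ is implied by $\|\widetilde{B}^*-B^*\|_{\max}\le\varepsilon$. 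Since $\|\cdot\|_{\max}$ is invariant under conjugate transpose, the analysis of $B$ is entirely parallel, with each building block replaced by its adjoint, so it suffices to establish $\|\widetilde{B}^*-B^*\|_{\max}\le\varepsilon$ together with the operation count for $B^*$ alone.

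The engine of the computation is an exact identity, valid for continuous $f$ supported on $\mathbb{B}$, expressing the ball-harmonic coefficient through the three-dimensional Fourier transform $\hat f$. Inserting the Rayleigh plane-wave expansion $e^{-i\xi\cdot x}=4\pi\sum_{\ell,m}(-i)^\ell j_\ell(|\xi|\,|x|)\,\overline{Y_\ell^m(\hat\xi)}\,Y_\ell^m(\hat x)$ into $\hat f(\xi)=\int_{\mathbb{B}} f(x)e^{-i\xi\cdot x}\,\dd x$ and using orthogonality of spherical harmonics shows that the $(\ell,m)$ spherical-harmonic mode of $\hat f$ on the sphere of radius $\rho$ equals, up to an explicit constant, the order-$\ell$ spherical Bessel transform $\int_0^1 f_{\ell m}(r)\,j_\ell(\rho r)\,r^2\,\dd r$, where $f_{\ell m}(r)$ is the $(\ell,m)$ angular mode of $f$ at radius $r$. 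Specializing to $\rho=\lambda_{\ell k}$ and invoking orthogonality of $\{j_\ell(\lambda_{\ell k}\,\cdot)\}_k$ on $[0,1]$ with weight $r^2$ identifies $(B^*f)_{k,\ell,m}$, up to the normalization $c_{\ell k}$, with the value at $\lambda_{\ell k}$ of the $(\ell,m)$ spherical-harmonic mode of $\hat f$. This is what enables a separable computation even though $f$ is sampled on a Cartesian grid.

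Accordingly I would structure the algorithm in three stages and bound the error introduced by each. Stage (i): evaluate $\hat f$ at frequency nodes placed on spheres $\{\rho_s\}$, with angular positions given by a quadrature grid on $\mathbb{S}^2$, via a nonuniform FFT on the voxel data. Stage (ii): apply a fast spherical harmonic transform on each shell to obtain the modes $\hat f_{\ell m}(\rho_s)$. Stage (iii): interpolate each mode in the radial variable from the sampled radii to the Bessel roots $\lambda_{\ell k}$. The accuracy analysis then controls four entrywise perturbations: the $\mathbb{S}^2$ quadrature error in stage (ii), which together with the Nyquist-type hypothesis $\lambda\le 6^{1/3}\pi^{2/3}\lfloor(V^{1/3}+1)/2\rfloor$ fixes a sufficient angular resolution; the spreading error of the nonuniform FFT in stage (i) and the radial-interpolation error of stage (iii), each decaying exponentially in a width parameter taken $\sim|\log\varepsilon|$, where the hypothesis $|\log_2\varepsilon|\le 5.3\,V^{1/3}$ ensures this width does not exceed the problem size; and the error of the fast spherical harmonic transform. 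Summing these and absorbing constants gives $\|\widetilde{B}^*-B^*\|_{\max}\le\varepsilon$.

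For the complexity I would count the three stages: the nonuniform FFT at $\mathcal{O}(V)$ target points costs $\mathcal{O}(V\log V)$ up to a factor polynomial in $|\log\varepsilon|$; the fast spherical harmonic transforms over the $\mathcal{O}(V^{1/3})$ radial shells, each at bandlimit $\mathcal{O}(V^{1/3})$, contribute the dominant $\mathcal{O}(V(\log V)^2)$ term; and the $\varepsilon$-dependent spreading in the nonuniform FFT together with the radial interpolations, summed over the $\mathcal{O}(V^{2/3})$ angular modes, contribute the $\mathcal{O}(V|\log\varepsilon|^2)$ term. I expect the main obstacle to be the accuracy of stage (ii): unlike the two-dimensional setting, where the trapezoidal rule on the circle is exact for bandlimited data, here one must prove that the chosen $\mathbb{S}^2$ quadrature integrates the relevant products of spherical harmonics against sampled Bessel profiles to within $\varepsilon$ \emph{uniformly} over all $(\ell,m,k)$ with $\lambda_{\ell k}\le\lambda$, and then propagate this bound through the approximate spherical harmonic transform without losing entrywise control. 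Pinning the quadrature order, the bandlimit $\lambda$, and the grid spacing $h$ together quantitatively, so that the explicit constants in the hypotheses emerge, is the crux of the argument; a secondary technical point is organizing the spreading and interpolation so that only the stated power $|\log\varepsilon|^2$, rather than a higher one, appears.
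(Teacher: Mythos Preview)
Your proposal is essentially correct and mirrors the paper's approach: the same plane-wave identity (Theorem~\ref{thm:main_identity}), the same three-stage pipeline of NUFFT, fast spherical harmonic transform, and radial Chebyshev interpolation, and the same identification of the $\mathbb{S}^2$-quadrature estimate (Lemma~\ref{lem:num_angular_nodes}) as the technical crux.

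One genuine difference: the paper does \emph{not} deduce the $\widetilde{B}$ bound from the $\widetilde{B}^*$ bound via the conjugate-transpose invariance of $\|\cdot\|_{\max}$, but instead repeats the full error-propagation argument separately (Claim~\ref{claim2}). The reason is that Algorithms~\ref{algoBH} and~\ref{algoB} are specified with different internal tolerances (compare~\eqref{eq:nufst} with~\eqref{eq:error_nufft_B_proof}), so $\widetilde{B}\neq(\widetilde{B}^*)^*$ as actually written; your shortcut would require either redesigning Algorithm~\ref{algoB} to apply the literal adjoint operator of Algorithm~\ref{algoBH}, or else checking that each fast subroutine's $\ell^1$--$\ell^\infty$ guarantee transfers to its adjoint with the stated constant. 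That is a legitimate route and arguably cleaner, but it is not what the paper does.

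Two technical ingredients you will need to make explicit when filling in details: the uniform bound $c_{\ell k}h^{3/2}\le\pi^2(3/2)^{1/4}$ on the normalization constants (Lemma~\ref{lem:cj}), and the Chebyshev Lebesgue-constant bound $\sum_q|u_q(\lambda_i)|\le 2+\tfrac{\pi}{2}\log Q$, both of which control how per-node discretization errors propagate into the final entrywise estimate. Finally, a minor bookkeeping correction: in the paper's Claim~\ref{claim3} the $V|\log\varepsilon|^2$ complexity term is attributed to the fast spherical harmonic transforms, not to NUFFT spreading plus radial interpolation as you suggest.
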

The proof of Theorem~\ref{thm:main} is in Appendix~\ref{appendix:proofs}. 
Several immediate remarks are helpful.

\begin{remark}[Other grids] \label{rem:grid}
We remark that the proof of \Cref{thm:main} carries through for points $x_j$ and samples $f_j$ on arbitrary grids, provided $\|x_j\|_{\ell^\infty} \leq 1$. Algorithmically, this is achieved by using a type 3 
(nonuniform to nonuniform) NUFFT transform (see \cite{barnett2019parallel} and \cite[Chapter~7.3]{plonka2018numerical}) in Step 1 and 3 of Algorithms~\ref{algoBH} and \ref{algoB}, respectively.
\end{remark}

\begin{remark}[Choice of norms]
Bounds of the type in \eqref{eq:main-bound} using $\ell^1$ - $\ell^\infty$ bounds are common in the analysis of algorithms that use the non-uniform fast Fourier transform (NUFFT), see for example
\cite[Eq.~(9)]{barnett2021aliasing}.
Since the algorithms of this paper use NUFFT, we obtain the  $\ell^1$ - $\ell^\infty$ form of \eqref{eq:main-bound}. However, we note that Theorem~\ref{thm:main} also ensures accuracy with $\ell^2$ - $\ell^2$ bounds with the same asymptotic computational complexity. Since $$\frac{\| \widetilde{B}^*f - B^*f\|_{\ell^2}}{\|f\|_{\ell^2}} \leq \sqrt{nV}\frac{\| \widetilde{B}^*f - B^*f\|_{\ell^\infty}}{\|f\|_{\ell^1}},$$ running  Algorithms~\ref{algoBH}--\ref{algoB} with accuracy parameter $\varepsilon/\sqrt{nV}$ ensures accuracy $\| \widetilde{B}^*f - B^*f\|_{\ell^2} \leq \varepsilon \|f\|_{\ell^2}$ in time $\mathcal{O}(V(\log V)^2 + V |\log \varepsilon/\sqrt{nV} |^2) = \mathcal{O}(V(\log V)^2 + V |\log \varepsilon |^2) $, using the fact that $n = \mathcal{O}(V)$ with the choice of bandlimit in Theorem~\ref{thm:main} (see \S\ref{sec:bandlimit}).
We emphasize that while this argument that converts $\ell^1-\ell^\infty$ bounds into $\ell^2 - \ell^2$ bounds works in theory, in practice when using finite precision arithmetic, it may lead to overly optimistic estimates when the accuracy parameter $\varepsilon$ is close to machine precision. Indeed, in this case 
$\varepsilon / \sqrt{n V}$ may be below machine precision, making the conversion fail in computer arithmetic with insufficiently high precision arithmetic. In \S \ref{sec:accuracy}, we report both $\ell^1-\ell^\infty$ and $\ell^2-\ell^2$ error estimates.

\end{remark}

\begin{remark}[Dense matrix operators]
When the number of basis functions $n$ is on the order of the number of voxels $V$,
a direct application of $B$ or $B^*$ requires forming a matrix with order $V^2$ elements and then matrix-vector products involve order $V^2$ operations. For example, if a $100 \times 100 \times 100$ volume is given, then $V=10^6$. Storing a dense matrix with $V^2 = 10^{12}$ entries using single precision floating point numbers would require $4$ terabytes of storage.
If this matrix cannot fit in memory, it either needs to be constructed online or moved in and out of memory, creating additional implementation challenges. 
\end{remark}

\begin{remark}[No smoothness assumptions]\label{rem:no_smoothness}
     Theorem \ref{thm:main} does not make any smoothness assumptions on the input function $f$. Theoretically speaking, to ensure that the basis coefficients are finite in the limit as $V\rightarrow \infty$, the underlying function should be at least square-integrable on the unit ball, which practically speaking does not restrict the class of volumes that can be considered.
     
     The fact that the algorithm works for noisy inputs is essential for applications such as low-pass filtering or 
    cryo-electron tomography, where measurement data is heavily corrupted by noise, so we cannot even assume the continuity of the underlying function $f$.  Even in the absence of noise, data is not necessarily smooth, due e.g. to sharp boundaries, corners, or cusps. 
    We note however that if the input $f_{j_1,j_2,j_3}$ were to consist of samples from a sufficiently smooth function, then a fast algorithm could be created by interpolating the volume data $f_{j_1,j_2,j_3}$ from the given Cartesian grid onto a product grid of the form $\{r_1, \ldots , r_R\}\times \{\theta_1 , \ldots , \theta_T\} \times \{\phi_1 , \ldots , \phi_P\}$ in the radial and spherical variables. Because of the separable structure of the basis functions in \eqref{eq:sph_bessel1}, a product-structured quadrature rule could then be applied to compute the basis coefficients with reduced complexity. Further the product quadrature could be computed using fast spherical harmonic transforms in the spherical variables. Assuming $R,T,P$ are $\mathcal{O}(N)$, then the resulting computational complexity would be the same as our method, that is $\mathcal{O}(N^3 (\log N)^2)$. 
    That said, the behavior of this algorithm, especially for noisy volumes, would be unpredictable. 
 The final accuracy of the basis coefficients would be related to the accuracy of the interpolation step and, therefore, to the smoothness of the underlying function $f$ that provides the discretizations $f_{j_1,j_2,j_3}$
 and noise.  
 Hence this straightforward algorithm is of limited utility to various applications of interest. 
 In contrast, the methods described in this paper are simply fast ways to apply the matrix operators $B$ and $B^*$ defined in  \eqref{eq:defB} and \eqref{eq:defB*}, respectively, and their $\ell^1$-$\ell^\infty$ accuracy guarantees hold for all inputs.
\end{remark}

\begin{remark}(Real-valued basis transform)
Statements analogous to Theorem~\ref{thm:main} hold for the real-valued basis functions of \eqref{eq:def_real_psiklm}. Our code provides the option of choosing between the real- and complex-valued bases, as well as converting computed basis coefficients between real and complex form via the transformation in \eqref{eq:def_real_psiklm}.
\end{remark}

\subsection{Key analytic identities}\label{sec:analytic_identities}
The key idea behind the algorithms is the analytic identities described in this section.
With the notation from \eqref{eq:def_spherical_gamma}, we write the basis functions $\psi_{k,\ell , m}$ in \eqref{eigenfundef} as 
\begin{equation}\label{eq:def_eigenf}
\psi_{k, \ell, m}(x) = c_{\ell k} j_\ell(\lambda_{\ell k} r_x) Y_{\ell}^m(\gamma_x) \chi_{[0,1)}(r_x).
\end{equation}
We will likewise write the frequency variable $\omega$ for the Fourier-transform of $\psi_{k, \ell, m}$ in spherical coordinates $\omega = r_\omega \gamma_\omega$, where $r_\omega \geq 0$ and $\gamma_\omega \in \mathbb{S}^2$, see
\S \ref{sec:sphericalcoor}.

We are studying expansions into the ball harmonics, i.e., into the basis defined by \eqref{eq:def_eigenf}, for $ m \in \{-\ell,\ldots,\ell\}$, $ \ell \in \mathbb{Z}_{\ge 0}$, and $ k \in \mathbb{Z}_{>0}$. 
Our fast algorithms build on the following identity
\begin{equation}\label{eq:plane_wave_expansion}
e^{\imath\omega \cdot x} = 4\pi  \sum_{\ell'=0}^\infty \sum_{m'=-\ell'}^{\ell'} \imath^{\ell'} j_{\ell'}(r_x r_\omega) Y_{\ell'}^{m'}(\gamma_x)\overline{Y_{\ell'}^{m'}(\gamma_\omega)} ,
\end{equation}
which is known as Lord Rayleigh's plane-wave expansion \cite[\S 4.7.4]{chirikjian2016harmonic}. The following result is a straightforward consequence of the plane-wave expansion. 
\begin{lemma}
Writing $\omega = r_\omega \gamma_\omega$ as in \eqref{eq:def_spherical_gamma}, the Fourier transform of $\psi$ satisfies
\begin{equation}
\widehat{\psi}_{ k, \ell, m}(\omega) =\left( 4\pi (-\imath)^\ell \int_0^1 c_{\ell k} j_{\ell}(\lambda_{\ell k}r_x)   j_\ell(r_x r_\omega)  r_x^2\dd r_x \right) Y_\ell^m(\gamma_{\omega}).\label{eq:fourier_psi}
\end{equation}
\end{lemma}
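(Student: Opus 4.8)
The plan is to compute the Fourier transform directly from its definition by inserting the plane-wave expansion \eqref{eq:plane_wave_expansion} and then collapsing the resulting double sum using the orthonormality of the spherical harmonics. Working in the Fourier convention whose exponential kernel is $e^{-\imath\omega\cdot x}$ (which is what produces the phase $(-\imath)^\ell$ appearing in \eqref{eq:fourier_psi}), I would start from
\begin{equation}
\widehat{\psi}_{k,\ell,m}(\omega) = \int_{\mathbb{R}^3} \psi_{k,\ell,m}(x)\, e^{-\imath\omega\cdot x}\,\dd x = c_{\ell k}\int_{\mathbb{B}} j_\ell(\lambda_{\ell k} r_x)\, Y_\ell^m(\gamma_x)\, e^{-\imath\omega\cdot x}\,\dd x,
\end{equation}
where the indicator $\chi_{[0,1)}(r_x)$ in \eqref{eq:def_eigenf} has restricted the domain of integration to the unit ball $\mathbb{B}$.

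First I would substitute the plane-wave expansion. Since $\omega\cdot x$ is real and the spherical Bessel functions $j_{\ell'}$ are real-valued, conjugating \eqref{eq:plane_wave_expansion} gives
\begin{equation}
e^{-\imath\omega\cdot x} = 4\pi\sum_{\ell'=0}^\infty\sum_{m'=-\ell'}^{\ell'} (-\imath)^{\ell'} j_{\ell'}(r_x r_\omega)\,\overline{Y_{\ell'}^{m'}(\gamma_x)}\, Y_{\ell'}^{m'}(\gamma_\omega).
\end{equation}
Inserting this and passing to spherical coordinates $\dd x = r_x^2\,\dd r_x\,\dd\gamma_x$, the integrand factorizes termwise into a purely radial part and a purely angular part, so that
\begin{equation}
\widehat{\psi}_{k,\ell,m}(\omega) = 4\pi c_{\ell k}\sum_{\ell'=0}^\infty\sum_{m'=-\ell'}^{\ell'} (-\imath)^{\ell'} Y_{\ell'}^{m'}(\gamma_\omega)\left(\int_0^1 j_\ell(\lambda_{\ell k} r_x) j_{\ell'}(r_x r_\omega)\, r_x^2\,\dd r_x\right)\left(\int_{\mathbb{S}^2} Y_\ell^m(\gamma_x)\,\overline{Y_{\ell'}^{m'}(\gamma_x)}\,\dd\gamma_x\right).
\end{equation}

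Next I would invoke the orthonormality relation $\int_{\mathbb{S}^2} Y_\ell^m\,\overline{Y_{\ell'}^{m'}}\,\dd\gamma_x = \delta_{\ell\ell'}\delta_{mm'}$ (see \S\ref{sec:sphereharmonic}), which annihilates every term except $\ell'=\ell$, $m'=m$ and thereby reduces the double sum to the single surviving summand, giving exactly \eqref{eq:fourier_psi}. The only step requiring genuine care — and the main, though mild, obstacle — is justifying the interchange of the infinite summation with the integral. I expect this to be controlled either by the uniform convergence of \eqref{eq:plane_wave_expansion} on the compact set $\mathbb{B}$ (on which the $Y_\ell^m$ and $j_\ell$ are bounded), or, more directly, by noting that all angular integrals with $(\ell',m')\neq(\ell,m)$ vanish identically, so only finitely many terms contribute and dominated convergence applies without difficulty. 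Everything else is the separation of the radial and angular variables followed by the orthonormality identity.
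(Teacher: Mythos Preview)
Your proposal is correct and follows essentially the same route as the paper's proof: insert the (conjugated) plane-wave expansion into the Fourier integral, separate variables, and collapse the sum via orthonormality of the spherical harmonics. The only minor difference is in how the interchange of sum and integral is justified---the paper appeals to the superexponential decay of $j_{\ell'}(z)$ as $\ell'\to\infty$ for fixed $z$ and invokes Fubini's theorem, whereas you suggest uniform convergence on the compact ball; either works.
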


\begin{proof}
Using \eqref{eq:plane_wave_expansion} in the definition of the Fourier transform results in
\begin{align*}
 &\widehat{\psi}_{ k, \ell, m}(\omega) = \int_{\mathbb{R}^3} \!\! \psi_{ k, \ell, m}(x) e^{-\imath\omega \cdot x}\dd x \\
 &= 4\pi \int_{\mathbb{R}^3} \!\! \psi_{ k, \ell, m}(x)  \left( \sum_{\ell'=0}^\infty \sum_{m'=-\ell'}^{\ell'} (-\imath)^{\ell'} j_{\ell'}(r_x r_\omega) \overline{Y_{\ell'}^{m'}(\gamma_x)}Y_{\ell'}^{m'}(\gamma_{\omega}) \right)\dd x \\
 &= 4\pi  \! \int_{\mathbb{R}^3} \!\!\! c_{\ell k} j_{\ell}(\lambda_{\ell k}r_x) Y_{\ell}^{m}(\gamma_x) \chi_{[0,1)}(r_x) \\
 &\qquad \qquad \qquad \times \sum_{\ell'=0}^\infty \sum_{m'=-\ell'}^{\ell'}  \!\!\!\! (-\imath)^{\ell'} j_{\ell'}(r_x r_\omega) \overline{Y_{\ell'}^{m'}(\gamma_x)}Y_{\ell'}^{m'}(\gamma_{\omega})\dd x \\
 &= \left( 4\pi (-\imath)^\ell \int_0^1 c_{\ell k} j_{\ell}(\lambda_{\ell k}r_x)   j_\ell(r_x r_\omega)  r_x^2\dd r_x \right) Y_\ell^m(\gamma_{\omega}),
 \end{align*}
where the last step used the orthogonality of the spherical harmonics from \eqref{eq:orthogonality_sph_harm}. We note that interchanging the integral over the sphere with the summations in the calculations is justified since $j_\ell(z)$ decays super exponentially when $z$ is fixed and $\ell \rightarrow \infty$ (see 
\eqref{jnJN} and \cite[Eq. (10.19.1)]{dlmf}), so Fubini's Theorem applies.
\end{proof}
The lemma is used to prove the next theorem, which is the basis of our fast algorithms.  
\begin{theorem}\label{thm:main_identity}
Let $f = \sum_{( k,\ell,m) \in \mathcal{I}} \alpha_{k, \ell, m} \psi_{k, \ell, m}$ for a finite index set $$\mathcal{I}  \subseteq \left\{ (k, \ell, m) : m \in \{-\ell,\ldots,\ell\}, \ell \in \mathbb{Z}_{\ge 0},  k \in \mathbb{Z}_{>0}\right\},$$ and define
\begin{equation}\label{eq:analytic_beta_equation}
    \beta_{\ell, m}(\rho) = \frac{\imath^\ell}{4\pi} \int_{\mathbb{S}^2} \widehat{f}(\rho \gamma_\omega) \overline{Y_\ell^m(\gamma_{\omega})} \dd\sigma(\gamma_\omega).
\end{equation}
It then holds that
\begin{equation}\label{eq:final_identity}
    \alpha_{k, \ell, m} = c_{\ell k}\beta_{ \ell, m }(\lambda_{\ell k}), \quad \text{for all} \quad (k,\ell,m) \in \mathcal{I}. 
\end{equation}
\end{theorem}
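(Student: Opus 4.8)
The plan is to combine the linearity of the Fourier transform with the explicit formula for $\widehat{\psi}_{k,\ell,m}$ from the preceding Lemma, and then to extract the coefficient $\alpha_{k,\ell,m}$ by applying two orthogonality relations in sequence: first the orthogonality of the spherical harmonics in the angular variable, and then the weighted orthogonality of the spherical Bessel functions in the radial variable. Since $\mathcal{I}$ is finite, all sums are finite and may be freely interchanged with the integrals below, so no convergence discussion is needed.

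First I would write $\widehat{f} = \sum_{(k',\ell',m')\in\mathcal{I}} \alpha_{k',\ell',m'}\widehat{\psi}_{k',\ell',m'}$ by linearity and substitute the Lemma's formula, which shows that on the sphere of radius $\rho$ each summand factors as $\widehat{\psi}_{k',\ell',m'}(\rho\gamma_\omega) = R_{k'\ell'}(\rho)\,Y_{\ell'}^{m'}(\gamma_\omega)$, where $R_{k'\ell'}(\rho) = 4\pi(-\imath)^{\ell'}\int_0^1 c_{\ell' k'} j_{\ell'}(\lambda_{\ell'k'}r_x)\,j_{\ell'}(r_x\rho)\,r_x^2\,\dd r_x$ is a purely radial factor. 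Inserting this into the definition \eqref{eq:analytic_beta_equation} of $\beta_{\ell,m}(\rho)$ and using $\int_{\mathbb{S}^2} Y_{\ell'}^{m'}(\gamma_\omega)\overline{Y_\ell^m(\gamma_\omega)}\,\dd\sigma(\gamma_\omega) = \delta_{\ell\ell'}\delta_{mm'}$ collapses the angular integral, leaving only the terms with $\ell'=\ell$ and $m'=m$. The prefactor $\imath^\ell$ in \eqref{eq:analytic_beta_equation} cancels the factor $(-\imath)^\ell$ carried by $R_{k'\ell}$, and the two factors of $4\pi$ cancel, so that $\beta_{\ell,m}(\rho) = \sum_{k'} \alpha_{k',\ell,m}\, c_{\ell k'} \int_0^1 j_\ell(\lambda_{\ell k'}r_x)\,j_\ell(r_x\rho)\,r_x^2\,\dd r_x$, where the sum runs over those $k'$ with $(k',\ell,m)\in\mathcal{I}$.

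The decisive step is then to evaluate at $\rho = \lambda_{\ell k}$. The radial integral becomes $\int_0^1 j_\ell(\lambda_{\ell k'}r_x)\,j_\ell(\lambda_{\ell k}r_x)\,r_x^2\,\dd r_x$, which is exactly the weighted inner product appearing in the orthonormality of the ball harmonics: writing $\int_{\mathbb{B}} \psi_{k,\ell,m}\,\overline{\psi_{k',\ell,m}}\,\dd x = \delta_{kk'}$ in spherical coordinates and again using spherical-harmonic orthogonality in the angular variable yields $c_{\ell k}c_{\ell k'}\int_0^1 j_\ell(\lambda_{\ell k}r_x)\,j_\ell(\lambda_{\ell k'}r_x)\,r_x^2\,\dd r_x = \delta_{kk'}$. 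Multiplying the previous display by $c_{\ell k}$ and substituting this relation collapses the sum over $k'$ to the single term $k'=k$, giving $c_{\ell k}\,\beta_{\ell,m}(\lambda_{\ell k}) = \alpha_{k,\ell,m}$, which is the claimed identity \eqref{eq:final_identity}.

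I expect the only genuinely delicate point to be the bookkeeping around the normalization constants $c_{\ell k}$: the identity comes out clean precisely because these constants are fixed so as to make the $\psi_{k,\ell,m}$ orthonormal, whence the radial Bessel integral evaluated at $\rho=\lambda_{\ell k}$ reduces to a Kronecker delta. Everything else — the linearity, the phase cancellation $\imath^\ell(-\imath)^\ell=1$, and the interchange of finite sums with the angular integral — is routine.
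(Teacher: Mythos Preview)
Your proposal is correct and follows essentially the same approach as the paper's proof: expand $\widehat{f}$ via linearity and the preceding Lemma, use spherical-harmonic orthogonality to reduce to a radial sum, then evaluate at $\rho=\lambda_{\ell k}$ and invoke the Bessel orthogonality relation to isolate $\alpha_{k,\ell,m}$. Your added remarks on the finiteness of $\mathcal{I}$, the phase cancellation $\imath^\ell(-\imath)^\ell=1$, and the role of $c_{\ell k}$ in making the radial integral a Kronecker delta are all accurate and simply make explicit what the paper leaves implicit.
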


\begin{proof}
Using \eqref{eq:fourier_psi}, we obtain
\begin{equation}
\begin{split}
        &\beta_{\ell, m}(\rho) = \frac{\imath^\ell}{4\pi}\int_{\mathbb{S}^2} \sum_{(k',\ell',m') \in \mathcal{I}} \alpha_{k',\ell',m'}\widehat{\psi}_{k',\ell', m'}(\rho \gamma_\omega) \overline{Y_\ell^m(\gamma_{\omega})} \dd \sigma(\gamma_\omega) \\
    &=  \sum_{(k',\ell',m') \in \mathcal{I}} \alpha_{k',\ell',m'} \delta_{\ell \ell'} \delta_{m m'} \int_0^1 c_{\ell k'} j_{\ell}(\lambda_{\ell k'}r_x) j_\ell(r_x \rho)  r_x^2 \dd r_x .
    \end{split}
\end{equation}
By orthogonality of the spherical Bessel functions, it follows that
\begin{equation}
\begin{split}
\beta_{\ell, m}(\lambda_{\ell k}) &= \sum_{(k',\ell',m') \in \mathcal{I}} \alpha_{k',\ell',m'} \delta_{\ell \ell'} \delta_{m m'} \int_0^1 c_{\ell k'} j_{\ell}(\lambda_{\ell k'}r_x) j_\ell(\lambda_{\ell k}r_x)  r_x^2 \dd r_x \\
&= \sum_{(k',\ell',m') \in \mathcal{I}} \frac{\alpha_{k',\ell',m'}}{c_{\ell k}} \delta_{k k'} \delta_{\ell,\ell'} \delta_{m m'} = \frac{\alpha_{k,\ell, m}}{c_{\ell k}},
\end{split}
\end{equation}
for any $(k,\ell, m)$ in $\mathcal{I}$.
\end{proof}

\subsection{Choice of bandlimit} \label{sec:bandlimit}
An important parameter in the algorithm is the choice of bandlimit for the expansion. In this section, we derive a maximum bandlimit based on equating the number of pixels and basis functions.

Let $V = V(h)$ be the number of voxels of size $h \times h \times h$ contained in the unit ball, where $h = 1/\lfloor (N+1)/2\rfloor$ for positive integer $N$. By \cite{heath1999lattice} we have
\begin{equation}
V(h) = \frac{4 \pi}{3} h^{-3} + 
\mathcal{O}\left(h^{-\frac{21}{16}-\varepsilon}\right), \quad \text{as} \quad h \rightarrow 0,
\end{equation}
for any fixed $\varepsilon > 0$.
The number of Dirichlet eigenvalues of the Laplacian on the unit-ball in $\mathbb{R}^3$ that are less than or equal to $\lambda^2$ is by Weyl's law \cite{guo2021note}:
\begin{equation} \label{eigenvalueas}
\mathcal{N}(\lambda) = \frac{2}{9 \pi} \lambda^3 - \frac{1}{4}   \lambda^{2} + o(\lambda^2) \quad \text{as} \quad \lambda \rightarrow \infty. 
\end{equation}
Equating $V(h) = \mathcal{N}(\lambda)$ gives a suitable bandlimit condition since beyond this bandlimit, there will be more basis functions than voxels. 
For simplicity, we equate the leading-order terms of $V(h)$ and $\mathcal{N}(\lambda)$, which gives
\begin{equation} \label{eq:bandlimit_on_lambda}
 \lambda = 6^{1/3 }\pi^{2/3} \frac{1}{h} .
\end{equation}
Since we discarded the second term in
\eqref{eigenvalueas}, this estimate on $\lambda$ is a slight underestimate compared to equating the exact expressions of $V(h)$ and $\mathcal{N}(\lambda)$.
We remark that, in practice, 
  it may be advantageous to compute expansions using a fraction of the bandlimit \eqref{eq:bandlimit_on_lambda}, say, half the maximum bandlimit if noise in the data may make the high-frequency coefficients less informative;
  in any case, choosing $\lambda$ on the order of $h^{-1}$ such that it is less than \eqref{eq:bandlimit_on_lambda}
   is reasonable.

\subsection{Precise setting}
We now make the setting of Theorem~\ref{thm:main} precise. 
We must specify the enumerations \eqref{eq:f-list} and \eqref{eq:psi-list} in the definitions of the operators $B$ and $B^*$.
We fix the enumeration of the $V = N^3$ voxels to be lexicographic, with no loss of generality.  
For the ball harmonics we switch between 
indexing them by the triples $(k,\ell , m)$ and enumerating them by sequential indices
\begin{equation}
\psi_1, \psi_2, \ldots , \psi_n,
\end{equation}
after fixing an ordering of the $(k,\ell,m)$ triples. 
The triples are ordered by sorting the ball harmonics via increasing values of $\lambda_{\ell k}$ with ties broken in the order $m = 0, -1, 1, -2, 2, \ldots , -\ell, \ell $.
Optionally, our implementation lets users specify integers $L$ and $K$, and consider only basis triples with $\ell \leq L$ and $k \leq K$.
The values of the indices $(k,\ell,m)$ corresponding to the sequential ordering are denoted by
\begin{equation}\label{eq:kilimi}
k_i, \ell_i , m_i, \quad \text{ for } \quad i = 1, \ldots , n.
\end{equation}
For convenience, for a given bandlimit $\lambda$ (see Section~\ref{sec:bandlimit} for one choice of $\lambda$) we \nolinebreak let
\begin{equation} \label{eq:list-lambda}
\lambda_1 \leq \ldots \leq \lambda_n ,
\end{equation}
be an enumeration of the $\lambda_{\ell k}$ with magnitude at most $\lambda$, where the $\lambda_{\ell k}$ are repeated according to their multiplicity. 

We will refer to the maximum values of $\ell$ and $k$ that correspond to a $\lambda_{\ell k} \leq \lambda$ by
\begin{align}\label{eq:def_of_L}
    L &= \max \{\ell \in \mathbb{Z}_{\geq 0}: \text{ there exists } k \in \mathbb{Z}_{>0} \text{ with } \lambda_{\ell k} \leq \lambda \}, \\
    K &= \max \{k \in \mathbb{Z}_{>0} : \text{ there exists } \ell \in \mathbb{Z}_{\geq 0} \text{ with } \lambda_{\ell k} \leq \lambda \}.\label{eq:def_of_K}
\end{align}
By \cite[Eq. 1.6]{elbert2001some}, the roots $\lambda_{\ell k}$ satisfy $\lambda_{\ell k} > \ell + 1  +\pi(k-1/2)$.  Therefore $\lambda_{\ell k} > \ell $ and $\lambda_{\ell k} > k$, which imply the (slightly pessimistic) bounds
\begin{equation}\label{eq:bandlimit_on_L}
    L \leq \lambda \quad    \text{and} \quad K \leq \lambda .
\end{equation}

\subsection{Fast algorithms}\label{sec:full_algorithm}
This section leverages the analytical identities of Section~\ref{sec:analytic_identities} to produce the fast algorithms of Theorem~\ref{thm:main}.

The overall idea of the algorithms is to compute the basis expansion coefficients by using \eqref{eq:analytic_beta_equation} and \eqref{eq:final_identity}. However, these identities only apply to continuous functions $f:[-1,1]^3 \rightarrow \mathbb{C}$, whereas Theorem~\ref{thm:main} assumes a discretized input. A main insight of our algorithms is that  \eqref{eq:analytic_beta_equation} and \eqref{eq:final_identity} have discrete counterparts, where replacing the continuous Fourier-transforms by the discrete Fourier-transforms approximately replaces the continuous basis coefficients $\alpha_{k,\ell, m}$ in \eqref{eq:final_identity} by the vector $B^*f$ from \eqref{eq:defB*}. 
It may not be immediately clear that a discrete version of these identities would lead to numerically accurate code.  However, a careful analysis proves that the resulting algorithms are both fast and accurate when using enough discretization points. In Lemmas~\ref{lem:num_radial_nodes} and \ref{lem:num_angular_nodes}, we establish sufficient estimates for the number of discretization points needed.

Crucially, we obtain the computational complexity of Theorem~\ref{thm:main} by not computing \eqref{eq:analytic_beta_equation} and \eqref{eq:final_identity} for \textit{all} $\lambda_{\ell k}$, but rather only for a smaller set of points $\rho_q$.  We then extend the result to all $\lambda_{\ell k}$ via 1D polynomial interpolation. In particular, with $\lambda_1$ and $\lambda_n$ as defined in \eqref{eq:list-lambda}, we take the smaller set of points to be Chebyshev nodes of the first kind on the interval $[\lambda_1, \lambda_n]$, i.e.,
\begin{equation}\label{eq:chebyshev_grid}
    \rho_q = \frac{\lambda_n - \lambda_1}{2} \cos\left(\frac{2q+1}{Q}\cdot \frac{\pi}{2}\right) + \frac{\lambda_1 + \lambda_n}{2}, \quad q = 0, \ldots , Q-1.
\end{equation}
The one-dimensional polynomial interpolating the function values $y_q$ from the source points $\rho_q$ to the target point $x$ is then
\begin{align}\label{eq:def_interp_poly}
P(x) = \sum_{q=0}^{Q-1} y_q u_q(x)  \text{ where } u_q(x) = \frac{\prod_{r \neq q} (x-\rho_r) }{\prod_{r \neq q} (\rho_q-\rho_r)}, \text{ for } q = 0, \ldots , Q-1.
\end{align}

All together, the fast procedure for applying $\widetilde{B}^*$ is as follows (see also Figure~\ref{fig:alg_illustration}):
\begin{enumerate}
\item Use the nonuniform fast Fourier transform (NUFFT) to approximately evaluate $\widehat{f}$ in \eqref{eq:analytic_beta_equation} on a grid with Chebyshev nodes in the radial direction and suitable spherical nodes.
\item Use a fast spherical harmonics transform to approximate the integral in \eqref{eq:analytic_beta_equation} at Chebyshev nodes $\rho$.
\item Use fast interpolation from the Chebyshev nodes $\rho$ to the points $\lambda_{\ell k}$ to approximate \eqref{eq:final_identity} (see Remark~\ref{rem:interpolation_methods} and the references therein for choices of methods for fast interpolation).
\end{enumerate}

\begin{figure}
    \centering
\includegraphics[width=\textwidth]{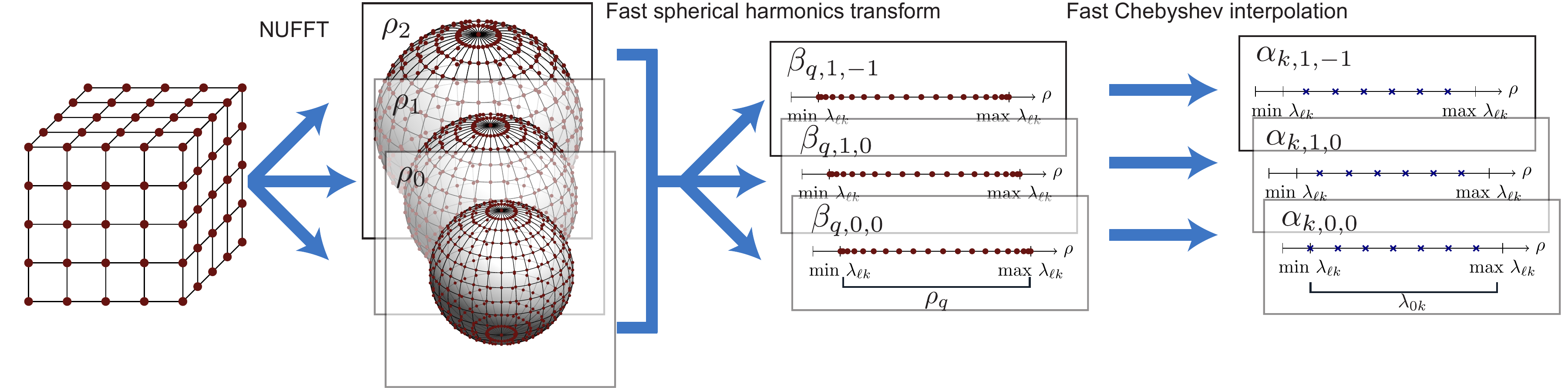}
    \caption{Illustration of the algorithm for applying $B^*$, where the arrows correspond to the steps of the algorithm.}
    \label{fig:alg_illustration}
\end{figure}

Note that each of the three steps above involved in applying the approximation to $B^*$ are linear transforms whose adjoints can be applied with the same low computational complexity.  For example, the adjoint of a type-I NUFFT is a type-II NUFFT.  Thus, approximately applying the operator $B$ can be achieved by applying each of the adjoints in the reverse order, with the same overall computational complexity as the approximate application of $B^*$. We summarize the two algorithms in Algorithms~\ref{algoBH} and \ref{algoB}.

\begin{algorithm}[ht!] 
\caption{Fast application of $\widetilde{B}^*$,}\label{algoBH} 
\setcounter{AlgoLine}{0}
\KwIn{Discretized volume $f \in \mathbb{C}^{V}$, bandlimit $\lambda$, and precision $\varepsilon$.}
\Constants{Maximum number $n$ of basis functions, 
$\varepsilon^\texttt{dis}$ given by \eqref{eq:epsdis} below,
$\varepsilon^\texttt{nuf}$, $\varepsilon^\texttt{fsh}$ and $\varepsilon^\texttt{in}$ given by \eqref{eq:nufst} below,
$
\textstyle
S = \lceil \max\{ 2e 6^{1/3} \pi^{2/3} \lfloor (V^{1/3}+1)/2 \rfloor  , 4|\log_2 (27.6\varepsilon^\texttt{dis})| \} \rceil,\,$ and
$Q = \lceil \max\left\{ 5.3 V^{1/3} , |\log_2 \varepsilon^\texttt{dis}| \right\} \rceil.
$
}
\KwOut{$\widetilde{B}^* f = \alpha \in \mathbb{C}^n$ approximating $B^* f$ to relative error $\varepsilon$ (see Theorem \ref{thm:main}).}
Let $\rho_q$ and $\gamma_{s,t}$ be given by \eqref{eq:chebyshev_grid} and \eqref{eq:angular_grid}, respectively. Using the NUFFT, calculate
$$
a_{qst} = \sum_{j=1}^V f_j e^{-\imath x_j \cdot \rho_{q}\gamma_{s,t}},
    $$
with relative error $\varepsilon^\texttt{nuf}$, where $q \in \{0, \ldots , Q-1\}$, $s \in \{ 0, \ldots, S\} $ and $t \in \{0, \ldots , S-1\}$.

Using a fast spherical harmonics transform, calculate
$$
\beta_{q, \ell, m} = \frac{ \imath^\ell}{4\pi} \sum_{s=0}^S\sum_{t=0}^{S-1} w_s a_{q s t}  \overline{Y_\ell^m(\gamma_{s,t})},
$$
for $q \in \{  0, \ldots , Q-1\}$, $\ell \in \{ 0, \ldots , L\}$ and $m \in  \{-\ell, \ldots , \ell\}$, where the $w_s$ are quadrature weights given by \eqref{eq:clenshaw_curtis_weights}.

For $\ell = 0, \ldots , L$, and $m = -\ell, \ldots , \ell$, use fast Chebyshev interpolation to calculate
$$
\alpha_i = 
\sum_{q=0}^{Q-1} c_i \beta_{q \ell_i m_i} u_q(\lambda_i)  h^{3/2},
$$
where $u_q(\cdot)$ are the interpolating polynomials given by \eqref{eq:def_interp_poly}.
\end{algorithm}

\begin{algorithm}[ht!] 
\caption{Fast application of $\widetilde{B}$.}\label{algoB} 
\setcounter{AlgoLine}{0}
\KwIn{Coefficients $\alpha \in \mathbb{C}^n$, bandlimit $\lambda$, and precision $\varepsilon$.}
\Constants{Number of voxels $V$, $\varepsilon^\texttt{dis}$ given by \eqref{eq:epsdis2},
$\varepsilon^\texttt{in}$,$\varepsilon^\texttt{fsh}$ and
$\varepsilon^\texttt{nuf}$ given by \eqref{eq:error_nufft_B_proof},
$
\textstyle
S = \lceil \max\{ 2e 6^{1/3} \pi^{2/3} \lfloor (V^{1/3}+1)/2 \rfloor  , 4|\log_2 (27.6\varepsilon^\texttt{dis})| \} \rceil,\,$ and
$Q = \lceil \max\left\{ 5.3 V^{1/3} , |\log_2 \varepsilon^\texttt{dis}| \right\} \rceil.
$}

\KwOut{$f \in \mathbb{C}^V$ approximating $B \alpha$ to relative error $\varepsilon$ (see Theorem \ref{thm:main})}
For $q= 0, \ldots , Q-1$, $\ell = 0, \ldots , L$, and $m = -\ell, \ldots , \ell$, use the adjoint of fast Chebyshev interpolation to apply
$$
\beta^*_{q, \ell, m} =
\sum_{\substack{i: \ell_i = \ell, \\ \quad m_i = m}} c_i 
\alpha_{i}
 u_q(\lambda_i)  h^{3/2},
$$
where the sum is over $i$ such that the corresponding sequential indices from \eqref{eq:kilimi} satisfy $\ell_i = \ell$ and $m_i = m$.

Using a fast spherical harmonics transform, calculate
$$
  a_{q s t}^* 
    = \sum_{\ell=0}^
    {L} \frac{ (-\imath)^\ell}{4\pi}
    \sum_{m =-\ell}^{\ell} w_s 
    \beta_{q, \ell, m}^* 
   Y_\ell^m(\gamma_{s,t}),
$$
where $q \in \{  0, \ldots , Q-1\}$, $s \in \{0, \ldots, S\}$, $t \in \{ 0, \ldots , S-1\}$ and the $w_s$ are quadrature weights given by \eqref{eq:clenshaw_curtis_weights}.

With $\rho_q$ given in \eqref{eq:chebyshev_grid} and $\gamma_{s,t}$ in \eqref{eq:angular_grid},
use the NUFFT to calculate the quantities
$$
    f_j = \sum_{q=0}^{Q-1}\sum_{s=0}^S \sum_{t=0}^{S-1} a_{q s t}^* e^{\imath x_j \cdot \rho_{q}\gamma_{s,t}},
$$
for each $j = 1, \ldots , V$.
\end{algorithm}

\section{Key discrete results}\label{sec:discrete_results}
The most important part of both the proof of Theorem~\ref{thm:main} and the practical implementation of Algorithms~\ref{algoBH} and \ref{algoB} is ensuring that the scaling of the number of discretization nodes is correct. In this section, we derive a bound on the number of nodes that achieves a desired level of precision with the complexity written in Theorem~\ref{thm:main}.

\subsection{Number of radial nodes}
Our first lemma states the number of radial nodes required to obtain a prescribed accuracy with the desired complexity.
\begin{lemma} \label{lem:num_radial_nodes}
Let the number of radial Chebyshev nodes in \eqref{eq:chebyshev_grid} be
\begin{equation}
Q = \left\lceil \max\{ 5.3 V^{1/3} , \log_2 \eta^{-1}\}  \right\rceil.
\label{eq:num_radial_nodes}
\end{equation}
Write $x_j = r_{x_j}\gamma_{x_j}$ as in \eqref{eq:def_spherical_gamma}, with $0 \leq r_{x_j} \leq 1$. Let $P_{\ell,m,j}$ be the degree $Q-1$ interpolating polynomial in \eqref{eq:def_interp_poly} satisfying
$$
P_{\ell,m,j}(\rho_q) =  j_\ell(r_{x_j} \rho_q)\overline{Y^m_\ell(\gamma_{x_j})},
$$
for $q \in \{0,\ldots,Q-1\}$, 
where $\rho_q$ are the Chebyshev nodes for $[\lambda_1,\lambda_n]$ defined in \eqref{eq:chebyshev_grid} and $\lambda_n$ satisfies \eqref{eq:bandlimit_on_lambda}. Then,
$$
|P_{\ell,m,j}(\rho_k) -  j_\ell(r_{x_j} \rho_k)\overline{Y^m_\ell(\gamma_{x_j})}| \le \eta,
$$ 
for all $\rho \in [\lambda_1,\lambda_n]$, $\ell \in \{0,\ldots,L\}$, $m\in \{-\ell, \ldots, \ell\}$, and $j \in \{1,\ldots,V\}$, where $L$ is defined in \eqref{eq:def_of_L}.
\end{lemma}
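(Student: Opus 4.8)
The plan is to reduce the claim to a one–dimensional polynomial interpolation error bound for the spherical Bessel function and then invoke the theory of Chebyshev interpolation of entire functions of exponential type. First I would observe that, as a function of the interpolation variable $\rho$, the spherical harmonic factor $\overline{Y^m_\ell(\gamma_{x_j})}$ is constant, so it factors out of the interpolation: writing $g_{\ell,j}(\rho) = j_\ell(r_{x_j}\rho)$ and letting $\widetilde P_{\ell,j}$ be the degree $Q-1$ interpolant of $g_{\ell,j}$ at the nodes $\rho_q$ from \eqref{eq:chebyshev_grid}, we have $P_{\ell,m,j}(\rho) = \overline{Y^m_\ell(\gamma_{x_j})}\,\widetilde P_{\ell,j}(\rho)$. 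Hence the error is
\[
|P_{\ell,m,j}(\rho) - j_\ell(r_{x_j}\rho)\overline{Y^m_\ell(\gamma_{x_j})}| = |Y^m_\ell(\gamma_{x_j})|\,|\widetilde P_{\ell,j}(\rho)-g_{\ell,j}(\rho)|,
\]
and I would bound $|Y^m_\ell(\gamma_{x_j})| \le \sqrt{(2\ell+1)/(4\pi)}$ using the spherical-harmonic addition theorem (Unsöld), so that the entire $\rho$-dependence is carried by the scalar interpolation problem for $g_{\ell,j}$.

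The core of the argument is a bound on this scalar error that is uniform in $\ell$ and in $r_{x_j}\le 1$. The key structural facts are that $z \mapsto j_\ell(z)$ is entire of exponential type $1$ and satisfies $|j_\ell(x)|\le 1$ for all real $x$ (see \cite{dlmf}); by Bernstein's theorem for functions of exponential type that are bounded on the real axis, this yields $|j_\ell(z)| \le e^{|\mathrm{Im}\,z|}$ throughout $\mathbb{C}$. Consequently, after the affine rescaling of $[\lambda_1,\lambda_n]$ to $[-1,1]$, the function $g_{\ell,j}$ is entire of type $\tau := r_{x_j}(\lambda_n-\lambda_1)/2 \le \lambda_n/2$ and is bounded by $1$ on $[-1,1]$. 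I would then apply the classical Chebyshev interpolation error estimate on a Bernstein ellipse $E_\sigma$ with $\sigma=e^\xi$: since points of $E_\sigma$ have imaginary part at most $\sinh\xi$, the rescaled $g_{\ell,j}$ is bounded there by $e^{\tau\sinh\xi}$, and the degree $Q-1$ interpolation error obeys
\[
\|\widetilde P_{\ell,j}-g_{\ell,j}\|_{[\lambda_1,\lambda_n]} \le \frac{4\, e^{\tau\sinh\xi}}{e^\xi-1}\,e^{-(Q-1)\xi}.
\]
Optimizing over $\xi$ (choosing $\cosh\xi\approx (Q-1)/\tau$, valid since $Q-1\gg\tau$) produces a super-geometric bound of the form $C\,(e\tau/(2Q))^{Q}$, which is the uniform scalar estimate I need; crucially the type bound $\tau\le\lambda_n/2$ is independent of $\ell$, so a single $Q$ works for every $\ell\le L$.

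Finally I would verify that the stated choice $Q=\lceil\max\{5.3V^{1/3},\log_2\eta^{-1}\}\rceil$ makes the product $\sqrt{(2\ell+1)/(4\pi)}\,(e\tau/(2Q))^Q$ at most $\eta$. Using $\tau\le\lambda_n/2$ together with the bandlimit $\lambda_n\le 6^{1/3}\pi^{2/3}\lfloor(V^{1/3}+1)/2\rfloor$ from \eqref{eq:bandlimit_on_lambda}, one checks that $5.3V^{1/3}\ge 2e\tau$, so on either branch of the maximum we have $Q\ge 2e\tau$ and hence $e\tau/(2Q)\le 1/4$, giving an error of order $C\sqrt{\ell}\,2^{-2Q}$. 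The deliberate factor of two in the threshold (taking $Q$ at least $2e\tau$ rather than the bare geometric-decay threshold $e\tau$) is what supplies the extra $2^{-Q}$ of slack: it absorbs both the algebraic prefactor $C\sqrt{\ell}\lesssim V^{1/6}$ and, through the branch $Q\ge\log_2\eta^{-1}$, the target accuracy $\eta$. The main obstacle I anticipate is precisely this bookkeeping of constants — establishing the $\ell$-uniform Bernstein growth bound and pushing the Bernstein-ellipse optimization through with explicit constants, so that the threshold lands at $2e\tau\le 5.3V^{1/3}$ with enough slack to dominate the $\sqrt{\ell}$ growth of the spherical harmonics uniformly over $\ell\le L$ and $r_{x_j}\le 1$.
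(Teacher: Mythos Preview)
Your approach is correct and will go through, but it differs from the paper's in a substantive way. Both arguments reduce to a scalar Chebyshev interpolation bound for $\rho\mapsto j_\ell(r_{x_j}\rho)\overline{Y^m_\ell(\gamma_{x_j})}$ on $[\lambda_1,\lambda_n]$; the divergence is in how the $\ell$-dependence is handled. The paper applies the classical remainder bound $|R(\rho)|\le (C_Q/Q!)\,((b-a)/4)^Q$ and controls $C_Q=\max|\partial_\rho^Q h|$ directly via the integral representation $\imath^\ell j_\ell(z)=\tfrac12\int_0^\pi e^{\imath z\cos\theta}P_\ell(\cos\theta)\sin\theta\,\dd\theta$: differentiating under the integral and applying Cauchy--Schwarz to the Legendre polynomial gives $|\partial_\rho^Q j_\ell(r_{x_j}\rho)|\le 1/\sqrt{2\ell+1}$, which \emph{exactly cancels} the Uns\"old bound $|Y^m_\ell|\le\sqrt{(2\ell+1)/(4\pi)}$, producing $C_Q\le 1/\sqrt{4\pi}$ uniformly in $\ell$. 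Your Bernstein-ellipse route (entire of type $1$, bounded by $1$ on $\mathbb{R}$, hence $|j_\ell(z)|\le e^{|\mathrm{Im}\,z|}$) also delivers an $\ell$-uniform bound on the Bessel part, but you do not see the cancellation: you carry a residual $\sqrt{2\ell+1}\lesssim V^{1/6}$ prefactor and must burn the slack from taking $Q\ge 2e\tau$ (rather than $e\tau$) to absorb it. Both strategies land on the same threshold $Q\approx 5.3V^{1/3}$; the paper's derivative bound makes the bookkeeping cleaner and explains why the constant is tight, while your argument is more structural and would port to any radial kernel of exponential type $\le 1$ bounded on the real axis.
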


\begin{proof}[Proof of Lemma \ref{lem:num_radial_nodes}] 
Let $h : [a,b] \rightarrow \mathbb{R}$ be a smooth function, and $P$ be the polynomial that interpolates $h$ at $Q$ Chebyshev nodes.  Then, the residual $R(\rho) = h(\rho) - P(\rho)$  satisfies
\begin{equation*}
    |R(\rho)| \leq \frac{C_{Q}}{Q!} \left(\frac{b-a}{4}\right)^{Q}, 
    \quad \text{where} \,\, C_Q := \max_{\rho \in [a,b]} |h^{(Q)}(\rho) |
\end{equation*}
for all $\rho \in [a,b]$; see \cite[Lemma~2.1]{rokhlin1988fast}.
Set
\begin{equation*}
[a,b] := [\lambda_{1}, \lambda_{m}], \quad \text{and} \quad
h(\rho) := j_\ell(r_{x_j} \rho)\overline{Y^m_\ell(\gamma_{x_j})}. 
\end{equation*}
First, we estimate $|R(\rho)|$ by
\begin{equation*}
    |R(\rho)| \leq \frac{C_{Q}}{Q !} \left(\frac{\lambda_{m} - \lambda_{1}}{4}\right)^{Q} \leq \frac{C_{Q} }{Q !}
    \left(
\left(\frac{\sqrt{3}\pi}{16}\right) ^{2/3} (V^{1/3}+1)
    \right)^Q,
\end{equation*}
where we used the fact that
$$
\lambda_n-\lambda_1 \le \lambda_n \leq   \left(\frac{\sqrt{3}\pi}{2}\right) ^{2/3}  (V^{1/3}+1); $$ 
see \S \ref{sec:bandlimit} for the last inequality. 
Next, we estimate 
$$
C_{Q} := \max_{\rho \in [\lambda_1,\lambda_n]} \left| \frac{\partial ^ {Q}}{\partial \rho^{Q}}\left(j_\ell(r_{x_j} \rho)\right)\overline{Y^m_\ell(\gamma_{x_j})} \right|,
$$
by expanding the function $j_\ell(r_{x_j} \rho)$ using the integral identity in \cite[Eq. 10.54.2]{dlmf} and obtain
\begin{eqnarray*}
\left|\frac{\partial ^ Q}{\partial \rho^Q} j_\ell(r_{x_j} \rho) \right| &=& 
\left|\frac{1}{2} \int_0^{\pi}\frac{\partial ^ Q}{\partial \rho^Q} \left( e^{\imath r_{x_j}\rho \cos(\theta)} P_\ell(\cos \theta) \sin \theta \right)\dd \theta \right| \\
&=&  \left| \frac{1}{2} \int_0^{\pi} \left(\imath r_{x_j} \cos(\theta) \right)^Q e^{\imath r_{x_j}\rho \cos(\theta)} P_\ell(\cos \theta) \sin \theta\dd \theta \right| \\
&\leq&  \frac{1}{2} \int_0^{\pi}  |P_\ell(\cos \theta)| \sin \theta \text{d}\theta = \frac{1}{2} \int_{-1}^{1}  |P_\ell(x)| \dd x \\
&\leq& \frac{1}{2} \sqrt{\frac{2}{2\ell + 1}} \sqrt{2} = \frac{1}{\sqrt{2\ell + 1}},
\end{eqnarray*}
where the first inequality follows from the triangle inequality and the fact that $0 \leq r_{x_j} \leq 1$. The second inequality follows from Cauchy-Schwarz and the fact that  $\sqrt{2/(2\ell+1)}$ is the $L^2$-norm of $P_\ell$ on the interval $[-1,1]$, see \cite[Table 18.3.1]{dlmf}. Combining this with the bound $|Y_\ell^m(\gamma_{x_j})| \leq \sqrt{\frac{2\ell + 1}{4\pi}}$ from \eqref{eq:unsold_bound}, we obtain 
$$
C_Q \leq \frac{1}{\sqrt{4\pi}}.
$$
In combination with Stirling's approximation \cite{robbins1955remark}, it follows that
\begin{equation}\label{eq:optimize_for_Q}
\begin{split}
|R(\rho)| &\le \frac{1}{\sqrt{4\pi} Q!} \left( \left(\frac{\sqrt{3}\pi}{16}\right) ^\frac{2}{3} (V^\frac{1}{3}+1)
    \right)^Q \\
    &\leq  
     \frac{1}{\sqrt{8Q}\pi} \left( \frac{e}{Q}\left(\frac{\sqrt{3}\pi}{16}\right) ^\frac{2}{3} (V^\frac{1}{3}+1)
    \right)^Q. 
    \end{split}
\end{equation}
We note that $V^{1/3} \geq 1$, $2e\left(\frac{\sqrt{3}\pi}{16}\right) ^{2/3} \leq 2.65$ and $\frac{1}{\sqrt{8Q}\pi} \leq 1$.
Therefore, in order to achieve  error $|R(\rho)| \leq \eta$, it suffices to set $Q$ such that
\begin{equation} \label{eq:nonlinear_q}
 \eta \geq \left( \frac{ 2.65 V^{1/3}}{Q} \right)^Q.
\end{equation}
Setting $\frac{ 2.65 V^{1/3}}{Q} = 1/2$
and solving for $Q$ gives
$
Q = 5.3 V^{1/3}
$. This implies that choosing
$
Q \ge \max\{ 5.3 V^{1/3} , \log_2 \eta^{-1}\}
$
is sufficient to achieve  error less than $\eta$. 
\end{proof}

\begin{remark}(Optimizing the bound for $Q$)
The bounds used in the proof of Lemma~\ref{lem:num_radial_nodes} to show the asymptotic behavior of $Q$ are slightly conservative. In practice, we can numerically obtain smaller values of $Q$ through \eqref{eq:optimize_for_Q}, by choosing the smallest integer $Q$ satisfying
$$
\frac{1}{\sqrt{4\pi} Q!} \left( \left(\frac{\sqrt{3}\pi}{16}\right) ^{2/3} (V^{1/3}+1)
    \right)^Q \leq \eta.
$$
\end{remark}

\subsection{Number of spherical nodes}
Our second lemma similarly gives the number of spherical nodes needed to obtain a prescribed accuracy with the desired complexity.

\begin{lemma} \label{lem:num_angular_nodes}
Let $\eta > 0$ be given.
With $\gamma_{s,t}, w_s$ from \eqref{eq:angular_grid} and \eqref{eq:clenshaw_curtis_weights}, $L$ from \eqref{eq:def_of_L} and $\lambda_1, \lambda_n$ from \eqref{eq:list-lambda}, 
let $\rho \in [\lambda_1,\lambda_n]$ and define
$$
R_{\ell, m, j}(\rho) = \left| \frac{ \imath^\ell}{4\pi} \sum_{s = 0}^{S}\sum_{t = 0}^{S-1} w_s  \overline{Y_\ell^m(\gamma_{s,t})}e^{-\imath x_j \cdot \rho\gamma_{s,t}} - j_\ell(r_{x_j} \rho)\overline{Y^m_\ell(\gamma_{x_j})} \right|,
$$
 where $x_j = r_{x_j} \gamma_{x_j}$ is written in the form in \eqref{eq:def_spherical_gamma} with $0 \leq r_{x_j} \leq 1$.
If $\lambda_n$ satisfies \eqref{eq:bandlimit_on_lambda} and $S$ satisfies
\begin{equation}
\label{eq:num_angular_nodes_phi}
 \textstyle
S \geq \lceil \max\{ 2e 6^{1/3} \pi^{2/3} \lfloor (V^{1/3}+1)/2 \rfloor , 4\log_2(27.6\eta^{-1}) \} \rceil,
\end{equation}
then
$
R_{\ell, m, j}(\rho) \leq \eta,
$
for $\ell \in \{0,\ldots,L\}$, $m \in \{-\ell, \ldots , \ell\}$, and $j \in \{1,\ldots,V\}$.
\end{lemma}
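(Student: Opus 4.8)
\emph{Proof proposal.} The plan is to recognize $R_{\ell,m,j}(\rho)$ as the error committed by the spherical Clenshaw--Curtis quadrature rule (with nodes \eqref{eq:angular_grid} and weights \eqref{eq:clenshaw_curtis_weights}) when applied to the integrand $g(\gamma_\omega)=\overline{Y_\ell^m(\gamma_\omega)}\,e^{-\imath x_j\cdot\rho\gamma_\omega}$. By conjugating the plane-wave expansion \eqref{eq:plane_wave_expansion} and invoking orthonormality of the spherical harmonics, exactly as in the derivation of \eqref{eq:fourier_psi}, the \emph{exact} integral of $g$ against $\imath^\ell/(4\pi)$ equals $j_\ell(r_{x_j}\rho)\overline{Y^m_\ell(\gamma_{x_j})}$. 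So the first step is to substitute the plane-wave expansion for $e^{-\imath x_j\cdot\rho\gamma_{s,t}}$ into the finite sum and interchange summation with quadrature, obtaining
\[
\frac{\imath^\ell}{4\pi}\sum_{s,t} w_s \overline{Y_\ell^m(\gamma_{s,t})}\,e^{-\imath x_j\cdot\rho\gamma_{s,t}}
= \imath^\ell\sum_{\ell'=0}^\infty\sum_{m'=-\ell'}^{\ell'} (-\imath)^{\ell'} j_{\ell'}(r_{x_j}\rho)\,\overline{Y_{\ell'}^{m'}(\gamma_{x_j})}\,Q_{\ell m,\ell'm'},
\]
where $Q_{\ell m,\ell'm'}=\sum_{s,t} w_s \overline{Y_\ell^m(\gamma_{s,t})}\,Y_{\ell'}^{m'}(\gamma_{s,t})$ is the discrete inner product induced by the rule. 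Since the target value is precisely the $(\ell',m')=(\ell,m)$ term of the continuous version, $R_{\ell,m,j}(\rho)$ equals the weighted sum of the defects $Q_{\ell m,\ell'm'}-\delta_{\ell\ell'}\delta_{mm'}$.

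The second step is an exactness argument that kills the low-degree terms. Because the $\phi$-nodes are uniform, the sum over $t$ vanishes unless $m'\equiv m\pmod S$; and since the Clenshaw--Curtis rule in $\cos\theta$ is exact for polynomials of degree $\le S$, while for $m'=m$ the product $\overline{Y_\ell^m}\,Y_{\ell'}^{m'}$ is a polynomial in $\cos\theta$ of degree $\ell+\ell'$, I would take the cutoff $N_0=\lfloor S/2\rfloor$ and show $Q_{\ell m,\ell'm'}=\delta_{\ell\ell'}\delta_{mm'}$ for every $\ell'\le N_0$. Indeed, for such $\ell'$ the stated hypothesis on $S$ forces $|m'-m|\le L+N_0<S$, so no azimuthal aliasing occurs and only $m'=m$ survives; then $\theta$-exactness together with orthonormality of the normalized associated Legendre functions supplies the Kronecker deltas. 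Hence all terms with $\ell'\le N_0$ vanish or cancel the target, and $R_{\ell,m,j}(\rho)$ collapses to the tail $\sum_{\ell'>N_0}$.

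The third step bounds the tail using three ingredients: the super-exponential decay of $j_{\ell'}(z)$ for $\ell'$ beyond $ez/2$ (from \eqref{jnJN} and \cite[Eq.~(10.19.1)]{dlmf}), the Unsöld bound $|Y_{\ell'}^{m'}|\le\sqrt{(2\ell'+1)/(4\pi)}$ from \eqref{eq:unsold_bound}, and the positivity and normalization $\sum_{s,t} w_s=4\pi$ of the weights, which give $|Q_{\ell m,\ell'm'}|\le\sqrt{(2\ell+1)(2\ell'+1)}$. Since $z=r_{x_j}\rho\le\lambda_n$ and the first term of the hypothesis enforces $S\ge 2e\,\lambda_n$ (with $\lambda_n\le 6^{1/3}\pi^{2/3}\lfloor(V^{1/3}+1)/2\rfloor$ by \eqref{eq:bandlimit_on_lambda}), at the cutoff $N_0=\lfloor S/2\rfloor\ge e\lambda_n\ge ez$ the ratio $ez/(2\ell'+1)\le 1/2$, so $j_{\ell'}(z)\lesssim 2^{-\ell'}$ for all $\ell'>N_0$. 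Summing the resulting geometric series, absorbing the per-degree polynomial factors coming from $\sqrt{2\ell'+1}$ and from the count ($\approx 2\ell'/S$) of aliased indices $m'\equiv m\pmod S$, yields a bound of the form $C\,2^{-S/4}$; requiring $C\,2^{-S/4}\le\eta$ then produces the second term $4\log_2(27.6\,\eta^{-1})$.

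The hard part will be the bookkeeping of this tail with \emph{explicit} constants rather than mere asymptotics: one must simultaneously track the Bessel decay rate against the harmonic growth $\sqrt{2\ell'+1}$, the number of surviving azimuthal modes, and the precise weight normalization, and then verify that the geometric sum closes below $\eta$ at exactly the stated threshold, thereby pinning down $2e\,6^{1/3}\pi^{2/3}$ and $27.6$. A secondary technical point is justifying the termwise interchange in the first step, which follows from the super-exponential decay of $j_{\ell'}$ and Fubini's theorem, just as in the proof of \eqref{eq:fourier_psi}.
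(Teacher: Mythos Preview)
Your proposal is correct and follows essentially the same strategy as the paper: insert the plane-wave expansion, use exactness of the spherical Clenshaw--Curtis rule to kill all terms with $\ell'\le\lfloor S/2\rfloor$, and bound the tail via the small-argument Bessel estimate together with the Uns\"old bound. The only presentational differences are that the paper cites the product-quadrature exactness \eqref{clenshaw-curtis-equality} directly (rather than decomposing into azimuthal aliasing plus polar Clenshaw--Curtis exactness as you do), and in the tail it bounds $\bigl|\sum_{s,t}w_s Y_{\ell'}^{m'}\overline{Y_\ell^m}-\int Y_{\ell'}^{m'}\overline{Y_\ell^m}\bigr|$ using the crude pointwise estimate $|w_s|\le 8\pi/S^2$ rather than $\sum_{s,t}w_s=4\pi$, and it does not exploit the aliased-index count $m'\equiv m\pmod S$ at all, simply summing over all $2\ell'+1$ values of $m'$.
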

\begin{proof}[Proof of Lemma \ref{lem:num_angular_nodes}]
Assume $S$ satisfies \eqref{eq:num_angular_nodes_phi}.
Let
\begin{equation} \label{eq:g_def}
g_{\ell, m, j}(\rho,\gamma) = \frac{ \imath^\ell}{4\pi}  \overline{Y_\ell^m(\gamma)}e^{-\imath x_j \cdot \rho\gamma}.
\end{equation}
We want to show that
\begin{equation} \label{eq:sum_ang_g}
R_{\ell, m, j}(\rho) = \left|  \sum_{s = 0}^{S}\sum_{t = 0}^{S-1} w_s g_{\ell, m, j}(\rho,\gamma_{s,t}) - j_\ell(r_{x_j} \rho)\overline{Y^m_\ell(\gamma_{x_j})}  \right| < \eta.
\end{equation}
Notice that the sum in \eqref{eq:sum_ang_g}  is a discretization of the integral
\begin{equation}\label{eq:exact_integral}
\int_{\mathbb{S}^2} g_{\ell, m, j}(\rho, \gamma) \dd\sigma(\gamma) = j_\ell(r_{x_j} \rho)\overline{Y^m_\ell(\gamma_{x_j})}
,
\end{equation}
where this exact expression for the integral follows from \eqref{eq:plane_wave_expansion}.
For fixed $j$ and $\rho$, we define the constants
\begin{equation}\label{def:alpha_in_lemma_bound}
\alpha_{m', \ell'} =  4\pi(-\imath)^{\ell'} j_{\ell'}(r_{x_j} \rho) \overline{Y_{\ell'}^{m'}(\gamma_{x_j})},
\end{equation}
and use the plane-wave expansion in \eqref{eq:plane_wave_expansion} to write
\begin{equation}
e^{-\imath \rho \gamma \cdot x_j} =   \sum_{\ell'=0}^\infty  \sum_{m'=-\ell'}^{\ell'} \alpha_{m', \ell'} Y_{\ell'}^{m'}(\gamma)  .
\end{equation}
By \eqref{eq:num_angular_nodes_phi} and \eqref{eq:bandlimit_on_L}, we have $L \le S/2$.
Recall from \eqref{clenshaw-curtis-equality} that the quadrature rule defined by \eqref{eq:angular_grid} and \eqref{eq:clenshaw_curtis_weights} is exact for integrals corresponding to inner products of bandlimited functions on the sphere when $L \le S/2$. This gives
\begin{align}
R_{\ell, m, j}(\rho)&= \left| \sum_{s = 0}^{S}\sum_{t = 0}^{S-1} w_s\frac{\imath^\ell}{4\pi} e^{-\imath x_j \cdot \rho \gamma_s }\overline{Y_\ell^m(\gamma_{s,t})} - \! \int_{\mathbb{S}^2} \frac{\imath^\ell}{4\pi}e^{-\imath x_j \cdot \rho \gamma_\omega }\overline{Y_\ell^m(\gamma_\omega)} \dd\sigma(\gamma_\omega) \right| \nonumber \\
&= \left| \sum_{s = 0}^{S}\sum_{t = 0}^{S-1} w_s\frac{\imath^\ell}{4\pi} \sum_{\ell'=0}^\infty  \sum_{m'=-\ell'}^{\ell'} \alpha_{m', \ell'} Y_{\ell'}^{m'}(\gamma_{s,t}) \overline{Y_\ell^m(\gamma_{s,t})} \right. \nonumber \\
& \qquad \quad \left. - \int_{\mathbb{S}^2}  \frac{\imath^\ell}{4\pi}\sum_{\ell'=0}^\infty  \sum_{m'=-\ell'}^{\ell'} \alpha_{m', \ell'} Y_{\ell'}^{m'}(\gamma_\omega) \overline{Y_\ell^m(\gamma_\omega)} \dd\sigma(\gamma_\omega) \right| \label{eq:def_Rlmj} \\
&= \left| \sum_{s = 0}^{S}\sum_{t = 0}^{S-1} \left( \sum_{\ell' = \lfloor S/2 \rfloor + 1}^{ \infty} \sum_{m'=-\ell'}^{\ell'} \frac{\imath^\ell}{4\pi}w_s\alpha_{m',\ell'} Y_{\ell'}^{m'}(\gamma_{s,t})\overline{Y_\ell^m(\gamma_{s,t})}\right) \right. \nonumber \\
&\left. \qquad \quad - \sum_{\ell' = \lfloor S/2 \rfloor + 1}^{ \infty} \sum_{m'=-\ell'}^{\ell'} \frac{\imath^\ell}{4\pi} \alpha_{m',\ell'} \int_{\mathbb{S}^2} 
 Y_{\ell'}^{m'}(\gamma_\omega) \overline{Y_\ell^m(\gamma_\omega)}d\sigma(\gamma_\omega) \right|. \nonumber
\end{align}
We proceed by bounding the coefficients $\alpha_{m',\ell'}$ in order to show that the preceding sum is bounded in magnitude by $\eta$. By the definition of $\alpha_{m', \ell'}$ in \eqref{def:alpha_in_lemma_bound}
\begin{align}
\left|\frac{\imath^\ell}{4\pi}\alpha_{m', \ell'} \right| &=    \left| j_{\ell'}(r_{x_j} \rho) \overline{Y_{\ell'}^{m'}(\gamma_{x_j})} \right| \nonumber \leq \frac{\sqrt{\pi}}{2}\frac{\left(\frac{r_{x_j}\rho}{2}\right)^{\ell'}}{\Gamma(\ell'+3/2)} \sqrt{\frac{2\ell'+1}{4\pi}} \nonumber\\[2ex]
    &\leq \frac{\sqrt{\pi}}{2}\sqrt{\frac{\ell'+3/2}{2\pi}}\left(\frac{e}{\ell' + 3/2}\right)^{\ell'+3/2}\left( \frac{r_{x_j}\rho}{2}\right)^{\ell'}\sqrt{\frac{2\ell'+1}{4\pi}} \label{eq:bound_alpha} \\
    &= \frac{e^{3/2}}{4\sqrt{2\pi}} \frac{\sqrt{2\ell'+1}}{\ell'+3/2}\left( \frac{er_{x_j}\rho}{2\ell' + 3}\right)^{\ell'} \leq \frac{1}{3}\left( \frac{er_{x_j}\rho}{2\ell' + 3}\right)^{\ell'},\nonumber
\end{align}
where the first inequality follows from \eqref{jnJN}, \cite[10.14.4]{dlmf} and \eqref{eq:unsold_bound}, the second inequality follows from \cite[5.6.1]{dlmf} and the final equality follows from the fact that $$\frac{e^{3/2}}{4\sqrt{2\pi}} \frac{\sqrt{2\ell'+1}}{\ell'+3/2} \leq \frac{1}{3},$$ for non-negative integers $\ell'$, with $\ell' = 1$ maximizing the left hand side.
Using $|w_s| \leq \frac{8\pi}{S^2}$ from \eqref{eq:bound_ws}, it follows that 
\begin{align*}
     &\left| \sum_{s = 0}^{S}\sum_{t = 0}^{S-1} w_s Y_{\ell'}^{m'}(\gamma_s) \overline{Y_\ell^m}(\gamma_s) -    \int_{\mathbb{S}^2} 
 Y_{\ell'}^{m'}(\gamma_\omega)\overline{Y_\ell^m}(\gamma_\omega) \dd\sigma(\gamma_\omega) \right|  \\
 &\qquad \leq \sum_{s = 0}^{S}\sum_{t = 0}^{S-1} \frac{8\pi}{S^2} |Y_{\ell'}^{m'}(\gamma_s) \overline{Y_\ell^m}(\gamma_s)| +    \int_{\mathbb{S}^2}
 |Y_{\ell'}^{m'}(\gamma_\omega)\overline{Y_\ell^m}(\gamma_\omega)| \dd\sigma(\gamma_\omega) \\
 & \qquad \leq \frac{8\pi}{S^2}S(S+1) \sqrt{\frac{2\ell'+1}{4\pi}} \sqrt{\frac{2\ell+1}{4\pi}} + 4\pi \sqrt{\frac{2\ell'+1}{4\pi}} \sqrt{\frac{2\ell+1}{4\pi}} \\
 & \qquad \leq \frac{112\pi}{9} \sqrt{\frac{2\ell'+1}{4\pi}} \sqrt{\frac{2\ell+1}{4\pi}},
\end{align*}
where the second inequality used \eqref{eq:unsold_bound} and the third that $\frac{S(S+1)}{S^2} \leq \frac{19}{18}$ since $S \geq \lceil 2e \lambda_1 \rceil = \lceil 2e \lambda_{01} \rceil = 18$. Inserting this into \eqref{eq:def_Rlmj} and using \eqref{eq:bound_alpha} results in
\begin{align}
R_{\ell, m, j}(\rho) &= \left| \sum_{\ell' = \lfloor S/2 \rfloor+1}^{ \infty} \sum_{m'=-\ell'}^{\ell'}   \frac{\imath^\ell}{4\pi}\alpha_{m',\ell'}\left( \sum_{s = 0}^{S}\sum_{t = 0}^{S-1} w_s Y_{\ell'}^{m'}(\gamma_s) \overline{Y_\ell^m}(\gamma_s) \right. \right. \nonumber \\ & \qquad \qquad \left. \left. - \int_{\mathbb{S}^2} 
 Y_{\ell'}^{m'}(\gamma_\omega)\overline{Y_\ell^m}(\gamma_\omega) \dd\sigma(\gamma_\omega) \right) \right| \nonumber \\
 &\leq \sum_{\ell' = \lfloor S/2 \rfloor+1}^{ \infty} \sum_{m'=-\ell'}^{\ell'}   \left| \frac{\imath^\ell}{4\pi}\alpha_{m',\ell'} \right| \frac{112\pi}{9}  \sqrt{\frac{2\ell'+1}{4\pi}} \sqrt{\frac{2\ell+1}{4\pi}} \nonumber \\
  &\leq \sum_{\ell' = \lfloor S/2 \rfloor+1}^{ \infty} \sum_{m'=-\ell'}^{\ell'}    \frac{112\pi}{27}  \left( \frac{er_{x_j}\rho}{2\ell'+3}\right)^{\ell'} \sqrt{\frac{2\ell'+1}{4\pi}} \sqrt{\frac{2\ell+1}{4\pi}} \nonumber  \\
 & = \frac{28}{27} \sqrt{2\ell +1} \sum_{\ell' = \lfloor S/2 \rfloor+1}^{ \infty} (2\ell'+1)^{3/2}   \left( \frac{er_{x_j}\rho}{2\ell'+3}\right)^{\ell'}\nonumber \\
  &  = \frac{28}{27} \sqrt{2\ell +1} (er_{x_j}\rho)^{3/2} \sum_{\ell' = \lfloor S/2 \rfloor+1}^{ \infty} \left( \frac{er_{x_j}\rho}{2\ell'+3}\right)^{\ell'-3/2}\nonumber \\
   &  \leq \frac{28}{27} \sqrt{2\ell +1} (er_{x_j}\rho)^{3/2} \sum_{\ell' = \lfloor S/2 \rfloor+1}^{ \infty} \left( \frac{er_{x_j}\rho}{2(\lfloor S/2 \rfloor+1)+3}\right)^{\ell'-3/2}. \label{eq:optimize_for_S}
   \end{align}
   Since we assume $S  \geq 2er_{x_j}\rho$, it follows that $\frac{er_{x_j}\rho}{2(\lfloor S/2 \rfloor+1)+3} \leq 1/2$, so
\begin{align*}
      R_{\ell, m, j} &  \leq \frac{28}{27} \sqrt{2\ell +1} (er_{x_j}\rho)^{3/2} \sum_{\ell' = \lfloor S/2 \rfloor+1}^{ \infty} \left( \frac{1}{2}\right)^{\ell'-3/2} \nonumber \\
       &  = \frac{28}{27} \sqrt{2\ell +1} (er_{x_j}\rho)^{3/2} \left( \frac{1}{2}\right)^{{\lfloor S/2\rfloor-3/2}} \\
       & \leq \frac{28}{27} \sqrt{S +1} \left(\frac{S}{2}\right)^{3/2} \left( \frac{1}{2}\right)^{{\lfloor S/2\rfloor-3/2}} \leq 27.6\cdot 2^{-S/4} \nonumber,
\end{align*}
since $$\frac{28}{27} \sqrt{S +1} \left(\frac{S}{2}\right)^{3/2} \left( \frac{1}{2}\right)^{{\lfloor S/2\rfloor-3/2}}2^{S/4} \leq 27.6,$$ for integer $S$, where the left hand side is maximized at $S=11$. Note that $r_{x_j} \leq 1$ and $\rho \leq \lambda_n$, and $L \leq \lambda_n$ by \eqref{eq:bandlimit_on_L}. Using \eqref{eq:bandlimit_on_lambda}, the conclusion then follows.
\end{proof}

\begin{remark}[Comparison to 2D angular node proof] 
The proof strategy for 
Lemma \ref{lem:num_angular_nodes} is more significantly complicated than
the analogous 2D result \cite[Lemma 4.2]{marshall2023fast}. When generalizing the 2D result, we noticed a gap in the 2D proof: the derivative estimate in the equation before \cite[Eq. (4.2)]{marshall2023fast} is not rigorously proven. In the present paper, we use a more complicated strategy, which avoids this issue.
We remark that a similar approach can be used to rigorously establish \cite[Lemma 4.2]{marshall2023fast} up to a constant factor of $2$.
\end{remark}

\begin{remark}[Optimizing the bound for $S$]
The bounds used in the proof of Lemma~\ref{lem:num_angular_nodes} to show the asymptotic behavior of $S$ are slightly conservative. In practice, we can numerically obtain smaller values of $S$ through \eqref{eq:optimize_for_S}, by choosing the smallest integer $S$ satisfying
$$
\frac{28}{27} \sqrt{2\ell +1} (e \lambda_n)^{3/2} \sum_{\ell' = \lfloor S/2 \rfloor+1}^{ \infty} \left( \frac{e \lambda_n}{2(\lfloor S/2 \rfloor+1)+3}\right)^{\ell'-3/2} \leq \eta .
$$
\end{remark}

\section{Numerical experiments}\label{sec:numerical_experiments}
This section applies Algorithms~\ref{algoBH} and \ref{algoB} to benchmarking problems. We first make two remarks on implementation details.

\begin{remark} (Barycentric interpolation) \label{rem:interpolation_methods}
There are many fast methods for performing the interpolation steps of Algorithms~\ref{algoBH} and \ref{algoB} \cite{Dutt1993,Greengard2004,lee2005type,dutt1996,gimbutas2020fast,plonka2018numerical}. In practice, using precomputed sparse barycentric interpolation matrices \cite{berrut2004barycentric} is simple, fast and accurate, and is therefore the method used in our implementation.
\end{remark}
\begin{remark} (FFT-based bandlimit heuristic) \label{rmkbandlimith}
Previously, in \S\ref{sec:bandlimit}, we derived the following upper bound on the bandlimit parameter: 
\begin{equation} \label{bandlimitupperboundest}
\lambda = 6^{1/3} \pi^{2/3} \lfloor (N+1)/2 \rfloor \approx 1.80 N.
\end{equation}
The main results of this paper guarantee that the operators $B$ and $B^*$ can be applied accurately and rapidly for any bandlimit bounded by $\lambda$.
However, for applications that compute with the basis expansion operators $B$ and $B^*$ (e.g., in fast computations of least-squares problems involving these operators), using a slightly smaller bandlimit is necessary since the minimum singular values of $B$ and $B^*$ are very small at this value of $\lambda$. Determining the optimal practical bandlimit is a separate theoretical issue from fast computation and one not addressed here.

A heuristic that we find to be effective is based on the FFT. The maximum frequency of a centered FFT of length $N$ is $\pi^2 (N/2)^2$. Equating this frequency with the eigenvalue $\lambda^2$ gives  
\begin{equation} \label{approxlambdan}
\lambda = \pi/2 N \approx 1.57 N.
\end{equation}
Since this heuristic leads to a bandlimit less than
\eqref{bandlimitupperboundest}, the accuracy guarantees of Theorem~\ref{thm:main} apply.  We use the bandlimit \eqref{approxlambdan} for the numerical experiments below.

\begin{figure}[htb]
\centering
\begin{tabular}{cc}
\includegraphics[width=.465\textwidth]{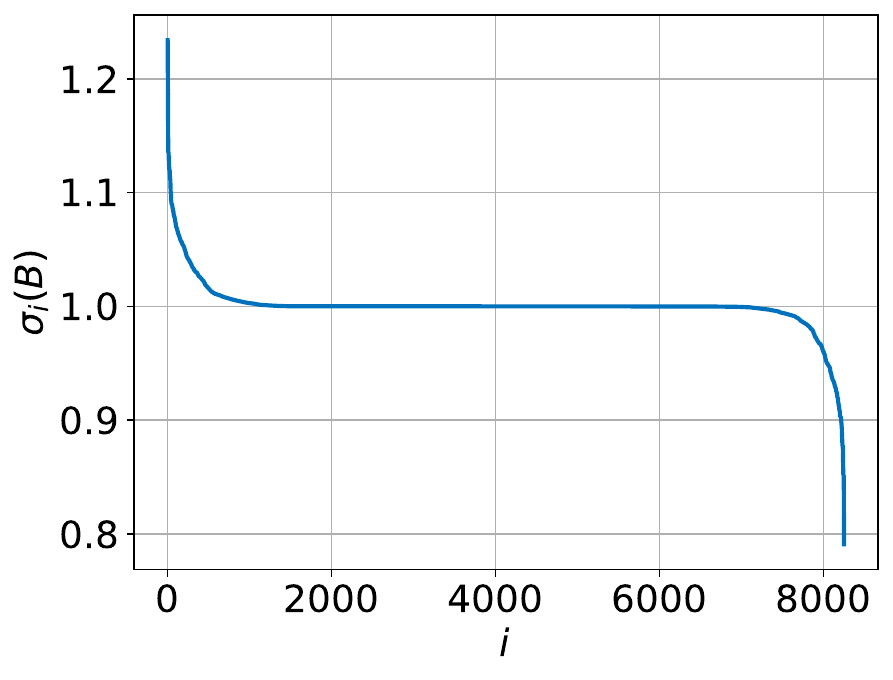} &
\includegraphics[width=.465\textwidth]{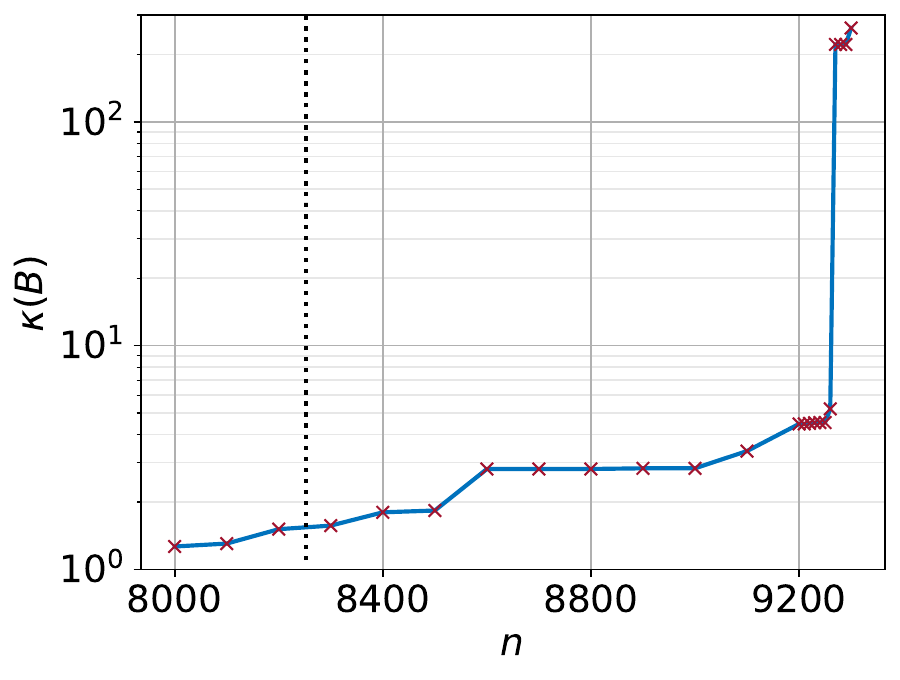}
\end{tabular}
\caption{ Singular values of $B$ when using the bandlimit $\lambda$ from \eqref{approxlambdan} (left) and
condition number $\kappa(B)$ as a function of the number of basis functions $n$ (right). We mark the number of basis functions resulting from using the bandlimit from \eqref{approxlambdan} as a dashed vertical line.}
\label{fig:condb}
\end{figure}

To illustrate the conditioning of $B$, Figure~\ref{fig:condb} plots the condition number of $B$ for volumes with $V = 32 \times 32 \times 32$ voxels as a function of the number of basis functions $n$ used. We mark the number of basis functions resulting from using the bandlimit from \eqref{approxlambdan} as a dashed vertical line. We also plot the singular values of $B$ when using the bandlimit \eqref{approxlambdan}, showing that the $B$ is well-conditioned with this choice of bandlimit.

\end{remark}

\subsection{Accuracy} \label{sec:accuracy}
This section verifies the accuracy guarantees of Theorem~\ref{thm:main}. We compare the fast methods of Algorithms~\ref{algoBH} and \ref{algoB} to the dense method performed by explicitly forming and applying the matrix $B$ from \eqref{eq:defB} with entries $B_{ij} = \psi_i(x_j)h^{3/2}$ and its adjoint $B^*$ from \eqref{eq:defB*}, respectively. We denote the relative $\ell^1$ - $\ell^\infty$ error comparing applications of $B$ and $B^*$ to their fast counterparts of Theorem~\ref{thm:main} by
\begin{equation}
    \text{err}_\alpha = \frac{\|B\alpha - \widetilde{B}\alpha\|_{\ell^\infty}}{\|\alpha\|_{\ell^1}} \quad \text{ and } \quad  \text{err}_f = \frac{\|B^* f - \widetilde{B}^* f\|_{\ell^\infty}}{\| f\|_{\ell^1}} .
\end{equation}

We verify that the Algorithms~\ref{algoBH} and \ref{algoB} achieve the $\ell^1$ - $\ell^\infty$ error guaranteed by Theorem~\ref{thm:main}. 
The experiments use as volume a 3D density map of the SARS-CoV-2 Omicron spike glycoprotein complex  \cite{guo2022structures} downloaded from the online electron microscopy data bank~\cite{lawson2016emdatabank} and illustrated in Figure~\ref{fig:downsampling}. The computational results are shown in Table~\ref{table:accuracy}. Explicitly forming the dense matrices $B$ becomes computationally prohibitive for moderate values of $N$ and we can therefore only compute the dense operators up to $N=56$. The experiments were performed on a machine with Intel Xeon E7-4870 processors and 1.5 TB of memory.

As an additional validation of the usefulness of Theorem~\ref{thm:main}, we also report the $\ell^2$ - $\ell^2$ errors
\begin{equation}
    \text{err}_{2,\alpha} = \frac{\|B\alpha - \widetilde{B}\alpha\|_{\ell^2}}{\|B\alpha\|_{\ell^2}} \quad \text{ and } \quad  \text{err}_{2,f} = \frac{\|B^* f - \widetilde{B}^* f\|_{\ell^2}}{\| B^*f\|_{\ell^2}} .
\end{equation}

\begin{table}
    \centering
        \caption{Relative accuracy of Algorithms~\ref{algoBH} and \ref{algoB}.}
    \label{table:accuracy}
\begin{tabular}{r||cccccc}
\hline
$N$ & $\varepsilon$ & $\text{err}_\alpha$ & $\text{err}_f$& $\text{err}_{2,\alpha}$ & $\text{err}_{2,f}$ \\
\hline
32 &  1.00000e-04 &  1.09147e-06 &  3.44431e-07  &  3.47059e-04 &  3.94597e-05 \\
32 &  1.00000e-07 &  8.80468e-10 &  7.31137e-10  &  3.77298e-07 &  7.32120e-08 \\
32 &  1.00000e-10 &  1.50301e-15 &  9.66415e-16  &  3.90013e-13 &  8.60492e-14\\
32 &  1.00000e-14 &  9.21641e-17 &  1.61313e-16  &  2.79836e-14 &  1.44868e-14 \\
\hline
\hline
48 &  1.00000e-04 &  3.07614e-07 &  1.43754e-07  &  3.10650e-04 &  3.69283e-05 \\
48 &  1.00000e-07 &  3.93735e-09 &  9.17106e-10  &  2.42870e-06 &  1.64956e-07 \\
48 &  1.00000e-10 &  1.71840e-13 &  2.63019e-14  &  1.22662e-10 &  5.45878e-12\\
48 &  1.00000e-14 &  8.80011e-15 &  3.44285e-15  &  5.11352e-12 &  6.40179e-13 \\
\hline
\hline
56 &  1.00000e-04 &  1.46292e-07 &  1.00198e-07  &  2.68961e-04 &  4.08369e-05\\
56 &  1.00000e-07 &  1.80844e-09 &  5.80538e-10  &  2.90177e-06 &  2.10560e-07 \\
56 &  1.00000e-10 &  2.87442e-13 &  6.20621e-14  &  3.34660e-10 &  1.56999e-11 \\
56 &  1.00000e-14 &  5.10866e-14 &  7.97114e-15  &  5.65145e-11 &  2.36787e-12\\
\hline
\end{tabular}
\end{table}

The  $\ell^1$ - $\ell^\infty$ errors are bounded by $\varepsilon$, as guaranteed by Theorem~\ref{thm:main} and the $\ell^2$ - $\ell^2$ errors are close to $\varepsilon$, even though this is not a priori guaranteed by Theorem~\ref{thm:main} given the hyperparameter settings chosen for the experiments.

\subsection{Timings}

This section verifies the computational complexity given in Theorem~\ref{thm:main}. We record the timings of the fast algorithms with two different choices of fast spherical harmonics transforms \cite{bonev2023spherical,slevinsky2019fast}, which have complexity $\mathcal{O}(L^{3}\log L)$ and $\mathcal{O}(L^2(\log L)^2)$, respectively, when computing a spherical harmonics transform with $\ell \leq L$. The method \cite{slevinsky2019fast} additionally has a one-time precomputation cost $\mathcal{O}(L^3 \log L)$, which does not affect the overall complexity of Algorithms~\ref{algoBH} and \ref{algoB}. Note that \cite{slevinsky2019fast} has the asymptotic complexity assumed in our complexity analysis in the proof of Theorem~\ref{thm:main}, whereas \cite{bonev2023spherical} has slower asymptotical speed, but is in practice fast for the volume sizes considered here.} We note that there are many other algorithms for fast spherical harmonics transforms, and an alternative includes for example \cite[Chapter~9.6]{plonka2018numerical}. The experiments used accuracy $\varepsilon = 10^{-7}$. The results are shown in Figure~\ref{fig:timings}. We also show the time of each step of Algorithm~\ref{algoBH} in Table~\ref{table:timings}.

We remark that Variant~2 in Figure~\ref{fig:timings} is slower than Variant~1, even though the spherical harmonics transform used in Variant~2 has asymptotically lower complexity than the one used in Variant~1. This has two plausible explanations: the complexity of the spherical harmonics transform in \cite{slevinsky2019fast} is observed to achieve the scaling $\mathcal{O}(L^2 (\log L)^2)$ for large values of $L$, which potentially are not encountered in these numerical tests and is a well-known drawback for fast spherical harmonics transforms. Moreover, the implementation of the spherical harmonics transform in \cite{slevinsky2019fast} is written in C and called in a wrapper package in Julia, and is in turn called from within Python in our implementation of Algorithms~\ref{algoBH} and \ref{algoB}, which could potentially lead to runtimes increased by a constant factor.

\begin{table}
\centering
        \caption{Timings of the steps of Algorithm~\ref{algoBH}. ``Variant 1'' uses \cite{bonev2023spherical} for the fast spherical harmonics transform in step 2 of Algorithm~\ref{algoBH}, and ``Variant 2'' uses \cite{slevinsky2019fast}. The timings of Algorithm~\ref{algoB} are similar.}
    \label{table:timings}
\resizebox{\textwidth}{!}{
\begin{tabular}{r||ccc||ccc}
& & Variant 1 & & & Variant 2 &\\
$N$ & Step 1 & Step 2 & Step 3 & Step 1 & Step 2 & Step 3 \\
\hline
32 &  1.635e-01 &  1.470e-01 &  9.175e-03 &  1.726e-01 &  1.872e+01 &  3.637e-02  \\
48 &  2.539e-01 &  3.326e-01 &  2.387e-02 & 3.478e-01 &  2.948e+01 &  1.029e-01  \\
56 & 4.233e-01 &  5.082e-01 &  3.568e-02 & 4.726e-01 &  4.038e+01 &  1.220e-01  \\
64 &  5.644e-01 &  3.799e+00 &  4.923e-02 & 8.495e-01 &  4.275e+01 &  3.368e-01  \\
128 &  3.340e+00 &  1.058e+01 &  3.183e-01 &   5.168e+00 &  5.349e+02 &  2.300e+00 \\
256 &  2.431e+01 &  4.142e+01 &  2.475e+00 &  3.711e+01 &  2.626e+03 &  3.171e+01 \\
512 &  1.810e+02 &  2.016e+02 &  2.411e+01 &  2.885e+02 &  2.107e+04 &  2.666e+02 \\
\hline
\end{tabular}
}
\end{table}

\begin{figure}
    \centering
\includegraphics[width=\textwidth]{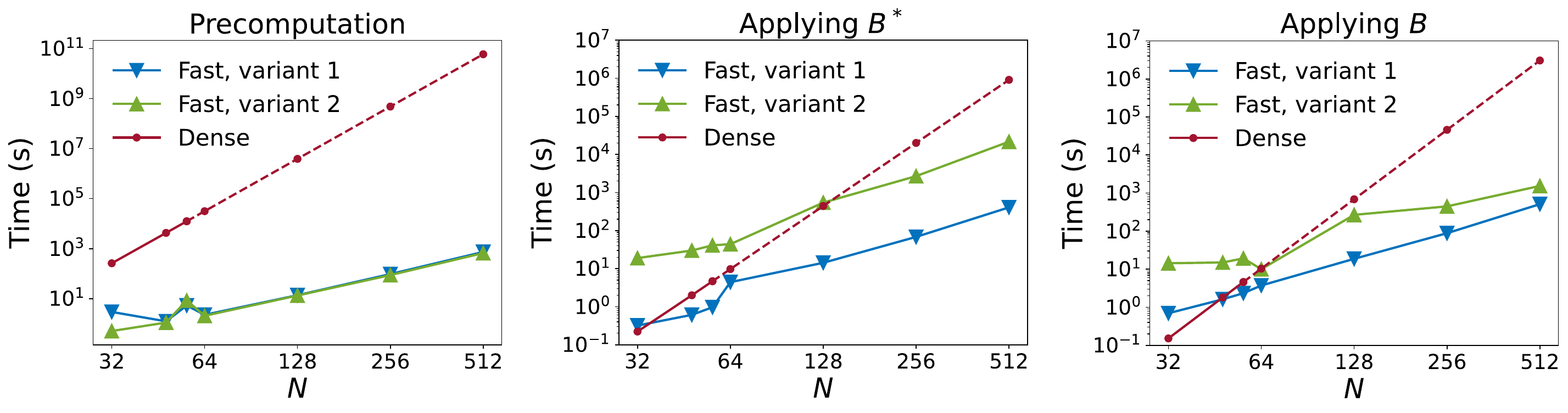}
    \caption{Timings of Algorithm~\ref{algoBH} and Algorithm~\ref{algoB}. ``Variant 1'' uses \cite{bonev2023spherical} for the fast spherical harmonics transform in steps 2 of Algorithm~\ref{algoBH} and Algorithm~\ref{algoB}, and ``Variant 2'' uses \cite{slevinsky2019fast}.}
    \label{fig:timings}
\end{figure}

\section{Conclusion}
This paper developed fast methods to transform between a voxel representation of a three-dimensional function and its expansion into ball harmonics. We proved accuracy guarantees and demonstrated the performance of the methods in numerical experiments. To our knowledge, there are no existing transforms from a voxel representation of a 3D function into an analogous steerable basis that provide
similar approximation with comparable time complexity and theoretical guarantees. 

In future work, we plan to apply these fast ball harmonic transforms in pipelines for computationally demanding 3D data processing tasks, such as reconstruction algorithms in cryo-electron microscopy.  It could also be interesting to adapt the approach here to other geometric domains, such as spherical annuli, which would involve identifying analytic analogues to the plane-wave expansion.

\subsection*{Acknowledgments}
We thank Gregory Chirikjian and Jeremy Hoskins for their helpful comments on a draft of this paper. 
Some of this research was performed while authors JK, NFM, and AS were visiting the long program on Computational Microscopy (Fall 2022) at the Institute for Pure and Applied Mathematics, which is supported by NSF DMS 1925919.
JK was supported in part by NSF DMS 2309782 and CISE-IIS 2312746, and start-up
grants from the College of Natural Sciences and Oden Institute at UT Austin. NFM was supported in part by a start-up grant from Oregon State University. AS and OM were supported in part by AFOSR FA9550-20-1-0266 and FA9550-23-1-0249, the Simons Foundation Math+X Investigator Award, NSF DMS 2009753, and NIH/NIGMS R01GM136780-01.

\bibliographystyle{siamplain}
\bibliography{references_arxiv}

\begin{thebibliography}{10}

\bibitem{abramo2010cmb}
{\sc L.~R. Abramo, P.~H. Reimberg, and H.~S. Xavier}, {\em {CMB in a box:
  Causal structure and the Fourier-Bessel expansion}}, Phys. Rev. D, 82 (2010),
  p.~043510.

\bibitem{akramus2012frequency}
{\sc S.~Akramus~Salehin and T.~D. Abhayapala}, {\em Frequency-radial duality
  based photoacoustic image reconstruction}, J. Acoust. Soc. Am., 132 (2012),
  pp.~150--161.

\bibitem{banerjee2015spectral}
{\sc A.~S. Banerjee, R.~S. Elliott, and R.~D. James}, {\em A spectral scheme
  for {K}ohn--{S}ham density functional theory of clusters}, J. Comput. Phys.,
  287 (2015), pp.~226--253.

\bibitem{barnett2021aliasing}
{\sc A.~H. Barnett}, {\em {Aliasing error of the $\exp(\beta\sqrt{1-z^2})$
  kernel in the nonuniform fast Fourier transform}}, Appl. Comput. Harmon.
  Anal., 51 (2021), pp.~1--16.

\bibitem{barnett2019parallel}
{\sc A.~H. Barnett, J.~Magland, and L.~af~Klinteberg}, {\em {A parallel
  nonuniform fast Fourier transform library based on an ``Exponential of
  semicircle'' kernel}}, SIAM J. Sci. Comput., 41 (2019), pp.~C479--C504.

\bibitem{bendory2023autocorrelation}
{\sc T.~Bendory, Y.~Khoo, J.~Kileel, O.~Mickelin, and A.~Singer}, {\em
  {Autocorrelation analysis for cryo-EM with sparsity constraints: Improved
  sample complexity and projection-based algorithms}}, Proc. Natl. Acad. Sci.
  USA, 120 (2023), p.~e2216507120.

\bibitem{berrut2004barycentric}
{\sc J.-P. Berrut and L.~N. Trefethen}, {\em {Barycentric Lagrange
  interpolation}}, SIAM Rev., 46 (2004), pp.~501--517.

\bibitem{bonev2023spherical}
{\sc B.~Bonev, T.~Kurth, C.~Hundt, J.~Pathak, M.~Baust, K.~Kashinath, and
  A.~Anandkumar}, {\em {Spherical {F}ourier neural operators: Learning stable
  dynamics on the sphere}}, in ICML 2023, PMLR, 2023, pp.~2806--2823.

\bibitem{boulle2020computing}
{\sc N.~Boull{\'e} and A.~Townsend}, {\em Computing with functions in the
  ball}, SIAM J. Sci. Comput., 42 (2020), pp.~C169--C191.

\bibitem{chirikjian2016harmonic}
{\sc G.~S. Chirikjian and A.~B. Kyatkin}, {\em {Harmonic Analysis for Engineers
  and Applied Scientists: Updated and Expanded Edition}}, Courier Dover
  Publications, 2016.

\bibitem{dlmf}
{\em {\it NIST Digital Library of Mathematical Functions}}.
\newblock http://dlmf.nist.gov/, Release 1.1.5 of 2022-03-15,
  \url{http://dlmf.nist.gov/}.
\newblock Frank W.~J. Olver, Adri B. {Olde Daalhuis}, Daniel W. Lozier, Barry
  I. Schneider, Ronald F. Boisvert, Charles W. Clark, Bruce R. Miller, Bonita
  V. Saunders, Howard S. Cohl, and Marjorie A. McClain, eds.

\bibitem{donatelli2015iterative}
{\sc J.~J. Donatelli, P.~H. Zwart, and J.~A. Sethian}, {\em {Iterative phasing
  for fluctuation X-ray scattering}}, Proc. Natl. Acad. Sci. USA, 112 (2015),
  pp.~10286--10291.

\bibitem{driscoll1994computing}
{\sc J.~R. Driscoll and D.~M. Healy}, {\em {Computing Fourier transforms and
  convolutions on the 2-sphere}}, Advances in Applied Mathematics, 15 (1994),
  pp.~202--250.

\bibitem{dutt1996}
{\sc A.~Dutt, M.~Gu, and V.~Rokhlin}, {\em {Fast algorithms for polynomial
  interpolation, integration, and differentiation}}, SIAM J. Numer. Anal., 33
  (1996), pp.~1689--1711, \url{https://doi.org/10.1137/0733082}.

\bibitem{Dutt1993}
{\sc A.~Dutt and V.~Rokhlin}, {\em {Fast Fourier transforms for nonequispaced
  data}}, SIAM J. Sci. Comput., 14 (1993), pp.~1368--1393.

\bibitem{elbert2001some}
{\sc A.~Elbert}, {\em {Some recent results on the zeros of Bessel functions and
  orthogonal polynomials}}, J. Comput. Appl. Math., 133 (2001), pp.~65--83.

\bibitem{galinsky2014automated}
{\sc V.~L. Galinsky and L.~R. Frank}, {\em Automated segmentation and shape
  characterization of volumetric data}, NeuroImage, 92 (2014), pp.~156--168.

\bibitem{gasteiger2020directional}
{\sc J.~Gasteiger, J.~Gro{\ss}, and S.~G{\"u}nnemann}, {\em Directional message
  passing for molecular graphs}, arXiv preprint arXiv:2003.03123,  (2020).

\bibitem{gimbutas2020fast}
{\sc Z.~Gimbutas, N.~F. Marshall, and V.~Rokhlin}, {\em A fast simple algorithm
  for computing the potential of charges on a line}, Appl. Comput. Harmon.
  Anal., 49 (2020), pp.~815--830.

\bibitem{goddard2018ucsf}
{\sc T.~D. Goddard, C.~C. Huang, E.~C. Meng, E.~F. Pettersen, G.~S. Couch,
  J.~H. Morris, and T.~E. Ferrin}, {\em {UCSF ChimeraX: M}eeting modern
  challenges in visualization and analysis}, Protein Sci., 27 (2018),
  pp.~14--25.

\bibitem{grebenkov2013}
{\sc D.~S. Grebenkov and B.-T. Nguyen}, {\em {Geometrical structure of
  Laplacian eigenfunctions}}, SIAM Rev., 55 (2013), pp.~601--667,
  \url{https://doi.org/10.1137/120880173}.

\bibitem{Greengard2004}
{\sc L.~Greengard and J.-Y. Lee}, {\em {Accelerating the nonuniform fast
  Fourier transform}}, SIAM Rev., 46 (2004), pp.~443--454.

\bibitem{gumerov2009broadband}
{\sc N.~A. Gumerov and R.~Duraiswami}, {\em {A broadband fast multipole
  accelerated boundary element method for the three dimensional Helmholtz
  equation}}, J. Acoust. Soc. Am., 125 (2009), pp.~191--205.

\bibitem{guo2022structures}
{\sc H.~Guo, Y.~Gao, T.~Li, T.~Li, Y.~Lu, L.~Zheng, Y.~Liu, T.~Yang, F.~Luo,
  S.~Song, et~al.}, {\em {Structures of Omicron spike complexes and
  implications for neutralizing antibody development}}, Cell Rep., 39 (2022),
  p.~110770.

\bibitem{guo2021note}
{\sc J.~Guo}, {\em {A note on the Weyl formula for balls in $\mathbb{R}^d$}},
  Proc. Amer. Math. Soc., 149 (2021), pp.~1663--1675.

\bibitem{heath1999lattice}
{\sc D.~R. Heath-Brown}, {\em Lattice points in the sphere}, in Number Theory
  in Progress, vol.~2, 1999, pp.~883--892.

\bibitem{hollerbach2000spectral}
{\sc R.~Hollerbach}, {\em A spectral solution of the magneto-convection
  equations in spherical geometry}, Internat. J. Numer. Methods Fluids, 32
  (2000), pp.~773--797.

\bibitem{kam1980reconstruction}
{\sc Z.~Kam}, {\em The reconstruction of structure from electron micrographs of
  randomly oriented particles}, J. Theoret. Biol., 82 (1980), pp.~15--39.

\bibitem{keiner2009using}
{\sc J.~Keiner, S.~Kunis, and D.~Potts}, {\em {Using NFFT 3---{A} software
  library for various nonequispaced fast Fourier transforms}}, ACM Trans. Math.
  Softw., 36 (2009), pp.~1--30.

\bibitem{keiner2008fast}
{\sc J.~Keiner and D.~Potts}, {\em Fast evaluation of quadrature formulae on
  the sphere}, Math. Comp., 77 (2008), pp.~397--419.

\bibitem{kovacs2002fast}
{\sc J.~A. Kovacs and W.~Wriggers}, {\em Fast rotational matching}, Acta
  Crystallogr. D Biol. Crystallogr., 58 (2002), pp.~1282--1286.

\bibitem{lanusse2012spherical}
{\sc F.~Lanusse, A.~Rassat, and J.-L. Starck}, {\em {Spherical 3D isotropic
  wavelets}}, Astron. Astrophys., 540 (2012), p.~A92.

\bibitem{lawson2016emdatabank}
{\sc C.~L. Lawson, A.~Patwardhan, M.~L. Baker, C.~Hryc, E.~S. Garcia, B.~P.
  Hudson, I.~Lagerstedt, S.~J. Ludtke, G.~Pintilie, R.~Sala, et~al.}, {\em
  {{EMDataBank} unified data resource for {3DEM}}}, Nucleic Acids Res., 44
  (2016), pp.~D396--D403.

\bibitem{lee2005type}
{\sc J.-Y. Lee and L.~Greengard}, {\em {The type 3 nonuniform FFT and its
  applications}}, J. Comput. Phys., 206 (2005), pp.~1--5.

\bibitem{leistedt2012exact}
{\sc B.~Leistedt and J.~D. McEwen}, {\em Exact wavelets on the ball}, IEEE
  Trans. Signal Process., 60 (2012), pp.~6257--6269.

\bibitem{leistedt2013flaglets}
{\sc B.~Leistedt, H.~V. Peiris, and J.~D. McEwen}, {\em {Flaglets for studying
  the large-scale structure of the Universe}}, in Wavelets and Sparsity XV,
  vol.~8858, SPIE, 2013, pp.~130--142.

\bibitem{leistedt20123dex}
{\sc B.~Leistedt, A.~Rassat, A.~Refregier, and J.-L. Starck}, {\em {3DEX: A
  code for fast spherical Fourier-Bessel decomposition of 3D surveys}}, Astron.
  Astrophys., 540 (2012), p.~A60.

\bibitem{lowrie2020fundamentals}
{\sc W.~Lowrie and A.~Fichtner}, {\em {Fundamentals of Geophysics}}, Cambridge
  University Press, 2020.

\bibitem{marshall2023fast}
{\sc N.~F. Marshall, O.~Mickelin, and A.~Singer}, {\em {Fast expansion into
  harmonics on the disk: A steerable basis with fast radial convolutions}},
  SIAM J. Sci. Comput., 45 (2023), pp.~A2431--A2457,
  \url{https://doi.org/10.1137/22M1542775}.

\bibitem{mcewen2013fourier}
{\sc J.~D. McEwen and B.~Leistedt}, {\em Fourier-{L}aguerre transform,
  convolution and wavelets on the ball}, arXiv preprint arXiv:1307.1307,
  (2013).

\bibitem{plonka2018numerical}
{\sc G.~Plonka, D.~Potts, G.~Steidl, and M.~Tasche}, {\em {Numerical Fourier
  analysis}}, Springer, 2018.

\bibitem{politis2016applications}
{\sc A.~Politis, M.~R.~P. Thomas, H.~Gamper, and I.~J. Tashev}, {\em
  {Applications of 3D spherical transforms to personalization of head-related
  transfer functions}}, in 2016 IEEE International Conference on Acoustics,
  Speech and Signal Processing (ICASSP), IEEE, 2016, pp.~306--310.

\bibitem{potts2003fast}
{\sc D.~Potts and G.~Steidl}, {\em {Fast summation at nonequispaced knots by
  NFFT}}, SIAM J. Sci. Comput., 24 (2003), pp.~2013--2037.

\bibitem{potts1998fast}
{\sc D.~Potts, G.~Steidl, and M.~Tasche}, {\em {Fast and stable algorithms for
  discrete spherical Fourier transforms}}, Linear Algebra and its Applications,
  275 (1998), pp.~433--450.

\bibitem{rassat20123d}
{\sc A.~Rassat and A.~Refregier}, {\em {3D spherical analysis of baryon
  acoustic oscillations}}, Astron. Astrophys., 540 (2012), p.~A115.

\bibitem{robbins1955remark}
{\sc H.~Robbins}, {\em {A remark on Stirling's formula}}, Amer. Math. Monthly,
  62 (1955), pp.~26--29.

\bibitem{rokhlin1988fast}
{\sc V.~Rokhlin}, {\em {A fast algorithm for the discrete Laplace
  transformation}}, J. Complexity, 4 (1988), pp.~12--32,
  \url{https://doi.org/https://doi.org/10.1016/0885-064X(88)90007-6},
  \url{https://www.sciencedirect.com/science/article/pii/0885064X88900076}.

\bibitem{RokhlinTygert2006}
{\sc V.~Rokhlin and M.~Tygert}, {\em {Fast Algorithms for Spherical Harmonic
  Expansions}}, SIAM J. Sci. Comput., 27 (2006), pp.~1903--1928,
  \url{https://doi.org/10.1137/050623073}.

\bibitem{salehin2010photoacoustic}
{\sc S.~A. Salehin and T.~D. Abhayapala}, {\em Photoacoustic image
  reconstruction from a frequency-invariant source localization perspective},
  in EUSIPCO 2010, IEEE, 2010, pp.~1627--1631.

\bibitem{sharon2020method}
{\sc N.~Sharon, J.~Kileel, Y.~Khoo, B.~Landa, and A.~Singer}, {\em {Method of
  moments for 3D single particle \emph{ab initio} modeling with non-uniform
  distribution of viewing angles}}, Inverse Problems, 36 (2020), p.~044003.

\bibitem{slevinsky2019fast}
{\sc R.~M. Slevinsky}, {\em {Fast and backward stable transforms between
  spherical harmonic expansions and bivariate Fourier series}}, Appl. Comput.
  Harmon. Anal., 47 (2019), pp.~585--606.

\bibitem{stein1971introduction}
{\sc E.~M. Stein and G.~Weiss}, {\em {Introduction to Fourier Analysis on
  Euclidean Spaces}}, vol.~1, Princeton University Press, 1971.

\bibitem{waldvogel2006fast}
{\sc J.~Waldvogel}, {\em {Fast construction of the Fej{\'e}r and
  Clenshaw--Curtis quadrature rules}}, BIT Numerical Mathematics, 46 (2006),
  pp.~195--202.

\bibitem{wang2009rotational}
{\sc Q.~Wang, O.~Ronneberger, and H.~Burkhardt}, {\em Rotational invariance
  based on {F}ourier analysis in polar and spherical coordinates}, IEEE Trans.
  Pattern Anal. Mach. Intell., 31 (2009), pp.~1715--1722.

\bibitem{wulker2020fast}
{\sc C.~W{\"u}lker}, {\em {Fast SGL Fourier transforms for scattered data}},
  Appl. Comput. Harmon. Anal., 49 (2020), pp.~1107--1135.

\end{thebibliography}

\begin{appendix}
\section{Proof of Theorem \ref{thm:main}}\label{appendix:proofs}

The proof of Theorem \ref{thm:main} is divided into Claims \ref{claim1} and \ref{claim2}, which establish the accuracy guarantee for Algorithm \ref{algoBH} and \ref{algoB}, respectively, followed by Claim \ref{claim3} which establishes the computational complexity for both algorithms. These results rely on technical lemmas established in \S \ref{sec:technical_lemmas}.

\subsection{Accuracy bounds for Algorithm \ref{algoBH}}\label{sec:accuracy_of_BH}

\begin{claim}\label{claim1}
Assume $\lambda \leq 6^{1/3}\pi^{2/3} \lfloor (V^{1/3}+1)/2 \rfloor$ and $\varepsilon >0$ is a user-specified accuracy parameter satisfying $ |\log_2 \varepsilon| \leq 5.3V^{1/3}$. Then Algorithm~\ref{algoBH} produces output  $\widetilde{B}^* f$ satisfying 
$$
    \| \widetilde{B}^*f - B^*f\|_{\ell^\infty} \leq \varepsilon \|f\|_{\ell^1}, 
$$
for all inputs $f$.
\end{claim}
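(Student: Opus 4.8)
The plan is to compare the algorithm's output $\alpha_i=(\widetilde{B}^* f)_i$ directly with the discrete inner product
\begin{equation*}
(B^*f)_i=\sum_{j=1}^V f_j\,\overline{\psi_i(x_j)}\,h^{3/2}=c_i h^{3/2}\sum_{j=1}^V f_j\, j_{\ell_i}(\lambda_i r_{x_j})\,\overline{Y_{\ell_i}^{m_i}(\gamma_{x_j})},
\end{equation*}
using that $c_i$ and $j_\ell$ are real and $r_{x_j}\le 1$. Since every step of Algorithm~\ref{algoBH} is linear in $f$ and the target is an $\ell^1$--$\ell^\infty$ bound, it suffices to show $|\alpha_i-(B^*f)_i|\le\varepsilon\|f\|_{\ell^1}$ for each fixed $i$ and then take the maximum over $i$. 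I would introduce an intermediate ``exact-transform'' output $\alpha_i^{\mathrm{ex}}$, obtained by running the three steps of the algorithm with the NUFFT, the fast spherical harmonic transform, and the fast interpolation replaced by their exact (but slow) counterparts, and split
\begin{equation*}
|\alpha_i-(B^*f)_i|\le\big|\alpha_i-\alpha_i^{\mathrm{ex}}\big|+\big|\alpha_i^{\mathrm{ex}}-(B^*f)_i\big|,
\end{equation*}
where the first term collects the algorithmic errors and the second the discretization errors. The tolerances $\varepsilon^{\texttt{dis}}$ and $\varepsilon^{\texttt{nuf}},\varepsilon^{\texttt{fsh}},\varepsilon^{\texttt{in}}$ from \eqref{eq:epsdis} and \eqref{eq:nufst} will be chosen so that each term is at most $\tfrac{\varepsilon}{2}\|f\|_{\ell^1}$.

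For the discretization error I would substitute the three exact steps into one another and interchange the order of summation to move $\sum_j f_j$ outside, exposing for each pair $(i,j)$ the kernel $\sum_q u_q(\lambda_i)\,g_{q,j}$, where $g_{q,j}=\tfrac{\imath^{\ell_i}}{4\pi}\sum_{s,t}w_s\,\overline{Y_{\ell_i}^{m_i}(\gamma_{s,t})}\,e^{-\imath x_j\cdot\rho_q\gamma_{s,t}}$. Writing $b_{q,j}=j_{\ell_i}(r_{x_j}\rho_q)\overline{Y_{\ell_i}^{m_i}(\gamma_{x_j})}$, I would decompose this kernel minus its target $j_{\ell_i}(\lambda_i r_{x_j})\overline{Y_{\ell_i}^{m_i}(\gamma_{x_j})}$ as
\begin{equation*}
\sum_q u_q(\lambda_i)\,(g_{q,j}-b_{q,j})\;+\;\Big(P_{\ell_i,m_i,j}(\lambda_i)-j_{\ell_i}(\lambda_i r_{x_j})\overline{Y_{\ell_i}^{m_i}(\gamma_{x_j})}\Big).
\end{equation*}
Lemma~\ref{lem:num_angular_nodes} bounds $|g_{q,j}-b_{q,j}|$ by the angular tolerance, amplified by the Chebyshev interpolation Lebesgue constant $\Lambda_Q=\max_\rho\sum_q|u_q(\rho)|=\mathcal{O}(\log Q)$, while Lemma~\ref{lem:num_radial_nodes} bounds the second (radial interpolation) term directly. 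Multiplying by $c_i h^{3/2}$ and summing $\sum_j|f_j|$ yields a bound of the form $C\,c_i h^{3/2}(\Lambda_Q+1)\,\varepsilon^{\texttt{dis}}\|f\|_{\ell^1}$; since a bound of the form $c_{\ell k}\le C'\lambda_{\ell k}\le C''/h$ holds, one has $c_i h^{3/2}=\mathcal{O}(h^{1/2})$, so choosing $\varepsilon^{\texttt{dis}}$ as in \eqref{eq:epsdis} makes this at most $\tfrac{\varepsilon}{2}\|f\|_{\ell^1}$. The hypotheses $\lambda\le 6^{1/3}\pi^{2/3}\lfloor(V^{1/3}+1)/2\rfloor$ and $|\log_2\varepsilon|\le 5.3V^{1/3}$ are exactly what guarantees that Lemmas~\ref{lem:num_radial_nodes} and~\ref{lem:num_angular_nodes} apply at the node counts $Q$ and $S$ prescribed in Algorithm~\ref{algoBH}.

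For the algorithmic error I would propagate the three approximation errors through the exact linear maps that follow them. The NUFFT guarantee gives that the computed $\tilde a_{qst}$ satisfies $|\tilde a_{qst}-a_{qst}|\le\varepsilon^{\texttt{nuf}}\|f\|_{\ell^1}$, where $a_{qst}=\sum_j f_j e^{-\imath x_j\cdot\rho_q\gamma_{s,t}}$ is the exact sum; this is carried through Step~2 with amplification $\tfrac{1}{4\pi}\sum_{s,t}|w_s|\max_{s,t}|Y_{\ell_i}^{m_i}(\gamma_{s,t})|\le C\sqrt{L}$, using $|w_s|\le 8\pi/S^2$ from \eqref{eq:bound_ws} and $|Y_\ell^m|\le\sqrt{(2\ell+1)/4\pi}$ from \eqref{eq:unsold_bound}, and then through Step~3 with amplification $\Lambda_Q$ and prefactor $c_i h^{3/2}$. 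The fast spherical harmonic transform error $\varepsilon^{\texttt{fsh}}$ and the fast interpolation error $\varepsilon^{\texttt{in}}$ are handled identically, each contributing a term of the same shape. Choosing these three tolerances as in \eqref{eq:nufst} makes the total algorithmic error at most $\tfrac{\varepsilon}{2}\|f\|_{\ell^1}$, and combining with the discretization bound closes the argument.

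I expect the main obstacle to be the uniform bookkeeping of the amplification constants rather than any single deep estimate: one must check that the products of quadrature-weight sums, the spherical-harmonic magnitude bounds, the Chebyshev Lebesgue constant, and the normalization prefactor $c_i h^{3/2}$ all stay controlled uniformly in $i$---in particular establishing $c_{\ell k}\le C'\lambda_{\ell k}$, so that $c_i h^{3/2}=\mathcal{O}(h^{1/2})$ is small---so that the tolerances $\varepsilon^{\texttt{dis}},\varepsilon^{\texttt{nuf}},\varepsilon^{\texttt{fsh}},\varepsilon^{\texttt{in}}$ defined via \eqref{eq:epsdis} and \eqref{eq:nufst} can be made to absorb them without spoiling the complexity. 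These uniform bounds are precisely what the technical lemmas of \S\ref{sec:technical_lemmas} supply.
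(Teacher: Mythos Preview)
Your approach is essentially the paper's: it too introduces the exact-transform intermediate (called $\alpha'$ there), splits into algorithmic and discretization pieces, handles the discretization via Lemmas~\ref{lem:num_radial_nodes} and~\ref{lem:num_angular_nodes} (packaged together as Lemma~\ref{lem:dis}), and handles the algorithmic errors by propagating $\varepsilon^{\texttt{nuf}},\varepsilon^{\texttt{fsh}},\varepsilon^{\texttt{in}}$ through the remaining exact linear maps.

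Two quantitative points in your bookkeeping need correction if you want the specific tolerances in \eqref{eq:epsdis}--\eqref{eq:nufst} to close the argument. First, the normalization constants scale like $c_{\ell k}\asymp\lambda_{\ell k}^{3/2}$, not $c_{\ell k}\le C'\lambda_{\ell k}$; Lemma~\ref{lem:cj} gives $c_i h^{3/2}\le\pi^2(3/2)^{1/4}=O(1)$, not $O(h^{1/2})$. This is harmless, since $O(1)$ is all the argument actually needs, but your stated scaling is wrong. Second, and more consequentially, your amplification of the NUFFT error through Step~2 via $\tfrac{1}{4\pi}\sum_{s,t}|w_s|\max_{s,t}|Y_{\ell}^{m}|\le C\sqrt{L}$ is a factor $\sqrt{L}$ too loose; the paper instead uses Cauchy--Schwarz together with the exactness of the spherical quadrature \eqref{clenshaw-curtis-equality} to get the uniform bound $\sum_{s,t}|w_s\,\overline{Y_{\ell}^{m}(\gamma_{s,t})}|\le\sqrt{4\pi}$ (see \eqref{eq:cs_ws_Ylm}). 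With your $\sqrt{L}$ factor the tolerances \eqref{eq:nufst} as written would not deliver the desired fraction of $\varepsilon\|f\|_{\ell^1}$; you would have to shrink $\varepsilon^{\texttt{nuf}}$ by an additional $\sqrt{L}\sim V^{1/6}$, which does not affect the asymptotic complexity but does change the constants you claim to be quoting. Apart from these two slips, your plan is correct and coincides with the paper's argument.
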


\begin{proof}[Proof of Claim~\ref{claim1}]
In this step, we will use the observation 
\begin{equation}\label{eq:sum_ws}
\begin{split}
\sum_{s=0}^S \sum_{t=0}^{S-1} w_s =4 \pi,
\end{split}
\end{equation}
which follows from the exactness of the quadrature rule \eqref{clenshaw-curtis-equality} and the fact that $Y^0_0(\gamma)$ is a constant function.  We will also use the observation
\begin{equation}\label{eq:cs_ws_Ylm}
\begin{split}
     \sum_{s=0}^S \sum_{t=0}^{S-1} | w_s \overline{Y_{\ell_i}^{m_i}(\gamma_{s,t}}) | &\leq \left( \sum_{s=0}^S \sum_{t=0}^{S-1} 
Y_{\ell_i}^{m_i}(\gamma_{s,t})\overline{Y_{\ell_i}^{m_i}(\gamma_{s,t})} 
w_s\right)^{1/2} \!\!\! \left( \sum_{s=0}^S \sum_{t=0}^{S-1} w_s  \right)^{1/2} \\
&=\sqrt{4 \pi},
\end{split}
\end{equation}
which follows from Cauchy-Schwarz, the fact that $w_s \geq 0$ from \eqref{eq:ws_pos} and exactness of the quadrature rule \eqref{clenshaw-curtis-equality}.

Denote $\alpha = \widetilde{B}^*f$. By writing out the results of the steps of
the algorithm and including the error terms induced by the fast and approximate applications of the NUFFT, spherical harmonics transform, and the interpolation steps, respectively, we have 
\begin{multline*}
\alpha_i =  c_i h^{3/2}  \left( \sum_{q=0}^{Q-1} \left( \frac{ \imath^{\ell_i}}{4\pi}
\left( \sum_{s=0}^S\sum_{t=0}^{S-1} \left( \sum_{j=1}^V f_j e^{-\imath x_j \cdot \rho_{q}\gamma_{s,t}} + \delta^\texttt{nuf}_{q s t} \right) \right. \right.\right. \\
\left. \left. \left. \times w_s \overline{Y_{\ell_i}^{m_i}(\gamma_{s,t})} \right) + \delta_{q}^{\texttt{fsh}} \right)
u_q(\lambda_i) +  \delta^\texttt{in}_i\right),
\end{multline*}
where $\delta^\texttt{nuf}_{q s t}$, $\delta^\texttt{fsh}_{q }$ and $\delta^{\texttt{in}}_i$ denote the error from
the NUFFT, spherical harmonics transform, and fast interpolation, respectively. We will also denote by $\varepsilon^\texttt{nuf}$, $\varepsilon^\texttt{fsh}$ and $\varepsilon^\texttt{in}$ the relative
error parameters for the NUFFT, fast spherical harmonic transform, and fast interpolation, respectively. The error induced by the fast application of the NUFFT satisfies the
$\ell^1$-$\ell^\infty$ relative error bound \cite{barnett2021aliasing, barnett2019parallel}
\begin{equation} \label{err:nuf}
\|\delta^\texttt{nuf}\|_{\ell^\infty}
\le \varepsilon^\texttt{nuf} \sum_{j=1}^V |f_j e^{-\imath x_j \cdot \rho_{q}\gamma_{s,t}}| = \varepsilon^\texttt{nuf} \|f\|_{\ell^1}.
\end{equation}
Next, assume $\varepsilon^\texttt{nuf} \le 1$ (which we will ensure holds below), which together with \eqref{eq:bound_ws} shows that the error of the fast spherical harmonics step satisfies the $\ell^1$ -- $\ell^\infty$ error bound \cite{keiner2008fast,keiner2009using}
\begin{equation}\label{eq:deltafsh_to_epsilonfsh}
\begin{split}
\|\delta^{\texttt{fsh}}\|_{\ell^\infty} &\leq \frac{ \varepsilon^{\texttt{fsh}}}{4\pi}
\sum_{s=0}^S\sum_{t=0}^{S-1} \left| w_s\left(\sum_{j=1}^V f_j e^{-\imath x_j \cdot \rho_{q}\gamma_{s,t}} + \delta^\texttt{nuf}_{q s t} \right)\right| \\
&\leq   \frac{ \varepsilon^{\texttt{fsh}}}{4\pi}
\sum_{s=0}^S\sum_{t=0}^{S-1} w_s (1+\varepsilon^{\texttt{nuf}})\|f\|_{\ell^1}
\leq 2\varepsilon^{\texttt{fsh}} \| f \|_{\ell^1}.
\end{split}
\end{equation}
where the last inequality used \eqref{eq:sum_ws}, \eqref{eq:ws_pos} and that $\varepsilon^{\texttt{nuf}} \leq 1$. Assuming $Q\varepsilon^{\texttt{in}} \leq 1$ (which we will choose to hold below), the error of the fast interpolation step satisfies 
(see \cite{dutt1996} also see \S\ref{claim3} for further discussion of fast interpolation methods) 
\begin{align} 
\|\delta^\texttt{in}\|_{\ell^\infty}
&\le \varepsilon^\texttt{in} \sum_{q=0}^{Q-1} \left| \frac{\imath^{\ell_i}}{4\pi}
\sum_{s=0}^{S} \sum_{t=0}^{S-1} 
 \left( \sum_{j=1}^V f_j e^{-\imath x_j \cdot \rho_{q}\gamma_{s,t}} + \delta^\texttt{nuf}_{q st} \right) w_s \overline{Y_{\ell_i}^{m_i}(\gamma_{s,t})} + \delta_q^\texttt{fsh}\right| \nonumber
\\
&\le \varepsilon^\texttt{in} \sum_{q=0}^{Q-1} \left( \frac{1}{4\pi}
\sum_{s=0}^{S} \sum_{t=0}^{S-1} 
(1+\varepsilon^{\texttt{nuf}})\|f\|_{\ell^1} \left| w_s \overline{Y_{\ell_i}^{m_i}(\gamma_{s,t})}\right| + \delta_q^\texttt{fsh} \right) \label{err:fst}
\\
&\le
\varepsilon^\texttt{in} Q \|f\|_{\ell^1} + 2\varepsilon^{\texttt{in}}\varepsilon^{\texttt{fsh}}Q\|f\|_{\ell^1} \nonumber \le
\varepsilon^\texttt{in} Q \|f\|_{\ell^1} +  2\varepsilon^{\texttt{fsh}}\|f\|_{\ell^1}.\nonumber
\end{align}
 Note that the third inequality in \eqref{err:fst} used \eqref{eq:cs_ws_Ylm}.
 
Next, denote by
$\alpha'_i$ the result of applying Algorithm~\ref{algoBH} without the error induced by the approximations in the NUFFT, spherical harmonics transform and fast
interpolation steps, i.e., 
\begin{equation}\label{eq:def_alpha_in_proof}
\alpha'_i =  c_i h^{3/2} \frac{ \imath^{\ell_i}}{4\pi} \sum_{q=0}^{Q-1} 
\sum_{s=0}^S\sum_{t=0}^{S-1}  \sum_{j=1}^V f_j e^{-\imath x_j \cdot \rho_{q}\gamma_{s,t}}  w_s \overline{Y_{\ell_i}^{m_i}(\gamma_{s,t})} 
u_q(\lambda_i).
\end{equation}

This results in
\begin{equation}
\begin{split}
|\alpha_i - \alpha'_i| &\le  c_i h^{3/2} \left( \sum_{q=0}^{Q-1} \left( \frac{1}{4\pi}  \|\delta^\texttt{nuf}\|_{\ell^\infty} \left( \sum_{s=0}^{S}\sum_{t=0}^{S-1} w_s |Y_\ell^m(\gamma_{s,t})|\right) + \|\delta^{\texttt{fsh}}\|_{\ell^\infty}\right)\right. \\
& \left. \qquad \qquad \qquad \qquad \times u_q(\lambda_i) + \|\delta^\texttt{in}\|_{\ell^\infty}\right) \\
&\le c_i h^{3/2} \left( \left( \frac{1}{2\sqrt{\pi}} \|\delta^\texttt{nuf}\|_{\ell^\infty} + \|\delta^{\texttt{fsh}}\|_{\ell^\infty} \right) \sum_{q=0}^{Q-1} |u_q(\lambda_i)| + \|\delta^\texttt{in}\|_{\ell^\infty} \right) \\
&\le  \pi^2 (3/2)^{1/4} \left( \left( \frac{\|\delta^\texttt{nuf}\|_{\ell^\infty}}{2\sqrt{\pi}}  + \|\delta^{\texttt{fsh}}\|_{\ell^\infty} \right) (2 + \frac{\pi}{2}
\log Q) + \|\delta^\texttt{in}\|_{\ell^\infty} \right), 
\end{split}
\end{equation}
where the last inequality used $\sum_{q=0}^{Q-1} |u_q(\lambda_i)| \leq (2 + \frac{\pi}{2}
\log Q)$ (see \cite[Eq.~(11)]{rokhlin1988fast}) and 
$c_ih^{3/2} \leq \pi^2 (3/2)^{1/4},$
from Lemma \ref{lem:cj}. Combining this inequality with \eqref{err:nuf}, \eqref{eq:deltafsh_to_epsilonfsh} and
\eqref{err:fst} gives
\begin{equation*} \label{nuf+fst}
\|\alpha - \alpha'\|_{\ell^\infty} \le \pi^2 \left(\frac{3}{2}\right)^{1/4} \left( \left( \frac{\varepsilon^\texttt{nuf}}{2\sqrt{\pi}} +  2\varepsilon^{\texttt{fsh}}\right) \left(2 + \frac{\pi}{2} \log Q \right) +  Q\varepsilon^\texttt{in} +  2\varepsilon^\texttt{fsh}  \right) \|f\|_{\ell^1}.
\end{equation*}
Choosing the parameters \begin{equation} \label{eq:nufst}
\varepsilon^\texttt{nuf} = \frac{
\varepsilon/(2\pi^{3/2} (3/2)^{1/4})}{(2 + \frac{\pi}{2} \log Q)} ,\,\,  \varepsilon^\texttt{in} = \frac{\varepsilon/(4\pi^{2}(3/2)^{1/4})}{
Q},\,\,  \varepsilon^\texttt{fsh} = \frac{\varepsilon/(8\pi^{2}(3/2)^{1/4})}{3 +\frac{\pi}{2}\log Q}
\end{equation}
therefore results in
\begin{equation} \label{errp1}
\|\alpha - \alpha'\|_{\ell^\infty} \le \frac{3\varepsilon}{4} \|f\|_{\ell^1}.
\end{equation}
We lastly relate $\alpha'_i$ to $(B^*f)_i$. The definition \eqref{eq:def_alpha_in_proof} gives
\begin{equation}
\begin{split}
\alpha'_i &= c_i h^{3/2} \frac{ \imath^{\ell_i}}{4\pi} \sum_{q=0}^{Q-1} 
\sum_{s=0}^S\sum_{t=0}^{S-1}  \sum_{j=1}^V f_j e^{-\imath x_j \cdot \rho_{q}\gamma_{s,t}}  w_s \overline{Y_{\ell_i}^{m_i}(\gamma_{s,t})} 
u_q(\lambda_i) \\
&= c_i h^{3/2} \sum_{j=1}^V f_j \left(j_{\ell_i}(r_j \lambda_i) \overline{Y_{\ell_i}^{m_i}(\gamma_{x_j})} + \delta_{i,j}^\texttt{dis} \right) = (B^* f)_i + c_i h^{3/2} \sum_{j=1}^V f_j \delta_{i,j}^\texttt{dis},
\end{split}
\end{equation} 
where the second equality follows from Lemma \ref{lem:dis} and $\delta^\texttt{dis}_{i,j}$ is a discretization error that satisfies $\|\delta^\texttt{dis}\|_{\ell^\infty} \le
(3 + \frac{\pi}{2} \log Q)
\varepsilon^\texttt{dis}$. This results in
\begin{equation}  \label{eq:bound_alpha_tilde_to_BH}
|\alpha'_i - (B^*f)_i | \le c_i h^{3/2} \|f\|_{\ell^1} \|\delta^\texttt{dis}\|_{\ell^\infty} \le \pi^2 (3/2)^{1/4} 
(3 + \frac{\pi}{2} \log Q)
\varepsilon^\texttt{dis} \|f\|_{\ell^1}.
\end{equation}
Set
\begin{equation} \label{eq:epsdis}
\varepsilon^\texttt{dis} = \left(\pi^2 (3/2)^{1/4}\left(3 + \frac{\pi}{2} \log{\left\lceil  5.3 V^{1/3}   \right\rceil }\right)\right)^{-1} \frac{\varepsilon}{4}.
\end{equation}
Note that $\varepsilon^\texttt{dis} \leq \varepsilon$ so the assumption $5.3V^{1/3} \geq |\log_2 \varepsilon|$ then implies $$Q = \log \lceil \max\{ 5.3 V^{1/3} , \log_2 \varepsilon^\texttt{dis}\}\rceil = \log\lceil 5.3V^{1/3}\rceil.$$  Combining \eqref{eq:bound_alpha_tilde_to_BH} and \eqref{errp1} then gives
$
\|\alpha - B^* f \|_{\ell^\infty} = \|\widetilde{B}^* f - B^* f \|_{\ell^\infty} \le \varepsilon \|f\|_{\ell^1} .
$
\end{proof}

\subsection{Accuracy bounds for Algorithm \ref{algoB}}\label{sec:accuracy_of_B}

\begin{claim}\label{claim2}
Assume $\lambda \leq 6^{1/3}\pi^{2/3} \lfloor (V^{1/3}+1)/2 \rfloor$ and $\varepsilon >0$ is a user-specified accuracy parameter satisfying $ |\log_2 \varepsilon| \leq 5.3V^{1/3}$. Then Algorithm~\ref{algoB} produces output  
$\widetilde{B} \alpha $ that satisfies the accuracy bounds 
$$
      \| \widetilde{B}\alpha - B\alpha\|_{\ell^\infty} \leq \varepsilon \|\alpha\|_{\ell^1} ,
$$
for all inputs $\alpha$. 
\end{claim}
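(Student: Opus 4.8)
The plan is to mirror the proof of Claim~\ref{claim1}, exploiting that Algorithm~\ref{algoB} is exactly the composition of the step-wise adjoints of Algorithm~\ref{algoBH} applied in reverse order: the adjoint interpolation (Step~1), the fast spherical harmonics transform (Step~2), and the NUFFT (Step~3). A clean conceptual shortcut is to note that the $\ell^1$-$\ell^\infty$ operator norm of a matrix equals its largest entry in modulus, which is invariant under conjugate transpose; hence if the three subroutines are run so that the implemented $\widetilde{B}$ is the exact conjugate transpose of the implemented $\widetilde{B}^*$, then, since $B = (B^*)^*$ by \eqref{eq:defB}--\eqref{eq:defB*}, one has $\|\widetilde{B}\alpha - B\alpha\|_{\ell^\infty}/\|\alpha\|_{\ell^1} = \|(\widetilde{B}^* - B^*)^*\|_{\ell^1 \to \ell^\infty} = \|\widetilde{B}^* - B^*\|_{\ell^1 \to \ell^\infty}$, and the bound follows immediately from Claim~\ref{claim1}. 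Because the error parameters for Algorithm~\ref{algoB} are chosen separately (the implemented transforms need not be bit-for-bit adjoints), I would nonetheless carry out the direct parallel analysis below to keep the argument self-contained.

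First I would write out $f = \widetilde{B}\alpha$ by tracing the three steps of Algorithm~\ref{algoB} and inserting the relative error terms $\delta^{\texttt{in}}$, $\delta^{\texttt{fsh}}$, $\delta^{\texttt{nuf}}$ produced by the approximate interpolation, spherical harmonics transform, and NUFFT, respectively, and let $f'$ denote the idealized output obtained by setting all three error terms to zero. Propagating each subroutine's relative error through the subsequent (exact) steps---using the Lebesgue-constant bound $\sum_{q} |u_q(\lambda_i)| \leq 2 + \frac{\pi}{2}\log Q$ from \cite{rokhlin1988fast}, the weight bound $|w_s| \leq 8\pi/S^2$, the identity $\sum_{s,t} w_s = 4\pi$, the Cauchy--Schwarz estimate $\sum_{s,t} |w_s \overline{Y_\ell^m(\gamma_{s,t})}| \leq \sqrt{4\pi}$, and the bound $c_i h^{3/2} \leq \pi^2(3/2)^{1/4}$ from Lemma~\ref{lem:cj}---yields $\|f - f'\|_{\ell^\infty} \leq C(\varepsilon^{\texttt{in}}, \varepsilon^{\texttt{fsh}}, \varepsilon^{\texttt{nuf}})\,\|\alpha\|_{\ell^1}$ with an explicit logarithmic-in-$Q$ constant. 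I would then rearrange $f'_j$ so that, for each fixed $i$, the angular sum over $(s,t)$ followed by the radial interpolation over $q$ reproduces exactly the discrete quadrature of Lemma~\ref{lem:dis} (now with the conjugates flipped, matching the $+\imath$ sign in Step~3 and the factor $(-\imath)^\ell$ in Step~2), giving $f'_j = (B\alpha)_j + \sum_i c_i \alpha_i h^{3/2}\delta^{\texttt{dis}}_{i,j}$ with $\|\delta^{\texttt{dis}}\|_{\ell^\infty} \leq (3 + \frac{\pi}{2}\log Q)\varepsilon^{\texttt{dis}}$; this controls $\|f' - B\alpha\|_{\ell^\infty}$ by $\pi^2(3/2)^{1/4}(3 + \frac{\pi}{2}\log Q)\varepsilon^{\texttt{dis}}\|\alpha\|_{\ell^1}$. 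Choosing the parameters $\varepsilon^{\texttt{in}}, \varepsilon^{\texttt{fsh}}, \varepsilon^{\texttt{nuf}}, \varepsilon^{\texttt{dis}}$ as specified in the Constants of Algorithm~\ref{algoB}, so that each contribution is at most $\varepsilon/4$ times $\|\alpha\|_{\ell^1}$, and combining via the triangle inequality then gives the claim.

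The main obstacle I anticipate is the $\ell^1$ bookkeeping, which runs in the opposite direction from Claim~\ref{claim1}. There the input $f$ entered through the NUFFT first, so $\|f\|_{\ell^1}$ was exposed immediately; here the input $\alpha$ enters through the adjoint interpolation step, which sums over all indices $i$ sharing a fixed pair $(\ell,m)$, so I must verify that regrouping these contributions into $\beta^*_{q,\ell,m}$ and then pushing the mass through the spherical-harmonics and NUFFT steps inflates the bound by at most a controllable constant, i.e.\ that the errors of Steps~2 and~3 scale with $\|\beta^*\|_{\ell^1}$ and in turn with $\|\alpha\|_{\ell^1}$. Getting this reversed order of error accumulation, together with the index grouping in the adjoint interpolation, exactly right is where the care lies; every underlying special-function and quadrature estimate is identical to those already established for Claim~\ref{claim1} and Lemma~\ref{lem:dis}, so no genuinely new analytic input is required.
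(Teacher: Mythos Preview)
Your proposal is correct and follows essentially the same route as the paper's proof of Claim~\ref{claim2}: trace the three steps with inserted subroutine errors, compare to the idealized output $f'$, invoke Lemma~\ref{lem:dis} for the discretization step, and tune the four $\varepsilon$-parameters so the contributions sum to at most $\varepsilon\|\alpha\|_{\ell^1}$. Your adjoint shortcut via the entrywise characterization of the $\ell^1\!\to\!\ell^\infty$ norm is a nice observation the paper does not mention, and you are right that it does not directly apply because the implemented $\widetilde{B}$ is not the exact conjugate transpose of $\widetilde{B}^*$; the subsequent direct argument you outline matches the paper's bookkeeping, including the reversed order of error accumulation you flag.
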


\begin{proof}[Proof of Claim~\ref{claim2}]

Denote $f = \widetilde{B} \alpha$. By writing out the results of the steps of
the algorithm and including the error terms induced by the fast and approximate applications of the interpolation step, the spherical harmonics transform and the NUFFT, respectively, we have 
\begin{equation*}
\begin{split}
f_j = \sum_{q=0}^{Q-1} & \left( \sum_{s=0}^{S}\sum_{t=0}^{S-1} \left( \sum_{\ell=0}^{L}  \sum_{m = -\ell}^{\ell} \left( \sum_{\substack{i: \ell_i = \ell,\\ \quad m_i = m}} c_i 
\alpha_{i}
 u_q(\lambda_i)  h^{3/2} + \delta_{q,\ell, m}^\texttt{in} \right) \right. \right. \\
 & \qquad \left. \left.  \times \frac{(-\imath)^\ell}{4\pi} w_sY_\ell^m(\gamma_{s,t}) \right) + \delta^{\texttt{fsh}}_{qst} \right)
 e^{\imath x_j \cdot \rho_q \gamma_{s,t}} + \delta_j^\texttt{nuf},
 \end{split}
\end{equation*}
where $\delta_{q,\ell, m}^\texttt{in}$, $\delta_{q,s,t}^\texttt{fsh}$ $\delta_j^\texttt{nuf}$  denote the error from
the fast interpolation, fast spherical harmonics transform and NUFFT, respectively. We will also denote by $\varepsilon^\texttt{in}$, $\varepsilon^\texttt{fsh}$ and $\varepsilon^\texttt{nuf}$ the relative
error parameters for the fast interpolation, fast spherical harmonic transform and NUFFT,   respectively. The error induced by the fast interpolation satisfies the
$\ell^1$-$\ell^\infty$ relative error bound (see \cite{dutt1996} and see also \S\ref{claim3} for further discussion of fast interpolation methods) so
\begin{equation}\label{eq:bound_dfin_for_B}
    \|\delta^\texttt{in}_{\ell, m}\|_{\ell^\infty} \le \varepsilon^\texttt{in} \sum_{\substack{i :
\ell_i = \ell,\\ \quad m_i = m}} c_i h^{3/2} |\alpha_i| \le \pi^2 (3/2)^{1/4} \varepsilon^\texttt{in} \sum_{\substack{i :
\ell_i = \ell,\\ \quad m_i = m}} |\alpha_i|,
\end{equation}
where $\delta_{\ell, m}^\texttt{in}$ is the vector defined by $\delta_{\ell, m}^\texttt{in} = (\delta_{q,\ell, m}^\texttt{in})_{q =0}^{Q-1}$. It follows that
$$
\sum_{\ell=0}^L \sum_{m=-\ell}^\ell \| \delta^\texttt{in}_{\ell, m} \|_{\ell^{\infty}} \leq \pi^2 (3/2)^{1/4} \varepsilon^\texttt{in} \|\alpha \|_{\ell^1}.
$$
Furthermore, the fast spherical harmonics error \cite{keiner2008fast,keiner2009using} satisfies 
\begin{align*}
\|\delta_{s,t}^\texttt{fsh}\|_{\ell^\infty} & \leq \varepsilon^\texttt{fsh}
\left| \sum_{\ell=0}^{L} \sum_{m = -\ell}^{\ell} \left( \sum_{\substack{i: \ell_i = \ell,\\ \quad m_i = m}} c_i 
\alpha_{i}
 u_q(\lambda_i)  h^{3/2} + \delta_{q,\ell, m}^\texttt{in} \right)  \frac{(-\imath)^\ell}{4\pi} w_s  \right| \\
 &\leq \frac{\varepsilon^\texttt{fsh}}{4\pi} \sum_{i=1}^n |c_i \alpha_i u_q(\lambda_i)h^{3/2}w_s| + \frac{\varepsilon^\texttt{fsh}}{4\pi} \sum_{\ell = 0}^L \sum_{m=-\ell}^{\ell}|\delta^{\texttt{in}}_{\ell, m}w_s| \\
 & \leq \frac{\varepsilon^\texttt{fsh}}{4\pi} \pi^2(3/2)^{1/4}w_s\sum_{i=1}^n|u_q(\lambda_i)\alpha_{i}| + \varepsilon^{\texttt{in}}\frac{\varepsilon^\texttt{fsh}}{4\pi}\|\alpha\|_{\ell^1} \pi^2 (3/2)^{1/4} w_s ,
\end{align*}
 where the third inequality used Lemma~\ref{lem:cj}, \cite[Lemma~A.2]{marshall2023fast}, \eqref{eq:bound_ws} and \eqref{eq:bound_dfin_for_B}. Using \eqref{eq:sum_ws}, it follows that
$$
\sum_{s=0}^S \sum_{t=0}^{S-1} \|\delta^\texttt{fsh}_{s,t}\|_{\ell^\infty} \leq \varepsilon^\texttt{fsh} \pi^2(3/2)^{1/4}\sum_{i=1}^n|u_q(\lambda_i)\alpha_{i}| + \varepsilon^{\texttt{in}}\varepsilon^\texttt{fsh}\|\alpha\|_{\ell^1} \pi^2 (3/2)^{1/4}.
$$
 
 Lastly, the error for the NUFFT \cite{barnett2021aliasing, barnett2019parallel} satisfies 
\begin{align*}
\|\delta^\texttt{nuf}\|_{\ell^\infty} &\le \varepsilon^\texttt{nuf}
\left( \sum_{q=0}^{Q-1}\sum_{s=0}^{S}\sum_{t=0}^{S-1}  \left( \sum_{\ell=0}^{L}  \sum_{m = -\ell}^{\ell} \left( \sum_{\substack{i: \ell_i = \ell,\\ \quad m_i = m}} \left| c_i 
\alpha_{i}
 u_q(\lambda_i)  h^{3/2}\right| + \left|\delta_{q, \ell, m}^\texttt{in} \right| \right) \right. \right. \\
 & \qquad\qquad\qquad \left.\left. \times  \frac{1}{4\pi} \left| w_sY_\ell^m(\gamma_{s,t})\right| \right) + \left| \delta_{qst}^{\texttt{fsh}}\right| \right)\\ 
 & \leq \frac{\varepsilon^\texttt{nuf}}{\sqrt{4\pi}}
 \sum_{q=0}^{Q-1} \sum_{\ell=0}^{L} \left( \sum_{m = -\ell}^{\ell} \left( \sum_{\substack{i: \ell_i = \ell,\\ \quad m_i = m}} c_i 
\left|\alpha_{i}
 u_q(\lambda_i) \right| h^{3/2} + \left| \delta_{q, \ell, m}^\texttt{in} \right| \right) \right) \\
 &\qquad \qquad +\varepsilon^{\texttt{nuf}}\sum_{q=0}^{Q-1} \sum_{s=0}^S \sum_{t=0}^{S-1} |\delta^{\texttt{fsh}}_{qst}|\\
& \le  \frac{\varepsilon^\texttt{nuf}}{\sqrt{4\pi}}  \left( \sum_{i=1}^n
\left( \sum_{q=0}^{Q-1} 
|u_q(\lambda_i)| \right) c_i h^{3/2} |\alpha_i |  + \sum_{q=0}^{Q-1}  \pi^2 (3/2)^{1/4} \varepsilon^\texttt{in} \|\alpha \|_{\ell^1} \right) \\
& \qquad + \varepsilon^\texttt{nuf} \sum_{q=0}^{Q-1} \left( \varepsilon^\texttt{fsh} \pi^2(3/2)^{1/4}\sum_{i=1}^n|u_q(\lambda_i)\alpha_{i}| + \varepsilon^{\texttt{in}}\varepsilon^\texttt{fsh}\|\alpha\|_{\ell^1} \pi^2 (3/2)^{1/4} \right) \\
&\le \frac{\varepsilon^\texttt{nuf}}{\sqrt{4\pi}} \left( \left( 2 + \frac{\pi}{2} \log Q \right) \pi^2 (3/2)^{1/4} \|\alpha\|_{\ell^1} +  Q \pi^2 (3/2)^{1/4}  \varepsilon^\texttt{in} \|\alpha\|_{\ell^1} \right) \\
&\qquad + \varepsilon^\texttt{nuf}  \left( \varepsilon^\texttt{fsh} \pi^2(3/2)^{1/4}(2+\frac{\pi}{2}\log Q)\|\alpha\|_{\ell^1} + Q\varepsilon^{\texttt{in}}\varepsilon^\texttt{fsh}\|\alpha\|_{\ell^1} \pi^2 (3/2)^{1/4} \right) \\
&\le  \frac{\varepsilon^\texttt{nuf}}{\sqrt{4\pi}} \pi^2 (3/2)^{1/4} \left( \left( 2 + \frac{\pi}{2} \log Q \right)  +  1\right)\left( 1+ \varepsilon^{\texttt{fsh}}\sqrt{4\pi}\right) \|\alpha\|_{\ell^1} ,
\end{align*}
where the second equality uses \eqref{eq:cs_ws_Ylm}, the fourth uses \cite[Eq.~(11)]{rokhlin1988fast} and the fifth assumes $Q  \varepsilon^\texttt{in} \le 1$, which we will show holds below. 

Next, denote by
$\tilde{f}$ the result of applying Algorithm~\ref{algoB} without the error induced by the approximations in the fast
interpolation, spherical harmonics transform and NUFFT steps, i.e.,
\begin{equation}\label{eq:def_f_in_proof}
f'_j = \sum_{q=0}^{Q-1}\sum_{s=0}^{S}\sum_{t=0}^{S-1} \sum_{\ell=0}^{L}  \sum_{m = -\ell}^{\ell} \sum_{\substack{i:  \ell_i = \ell, \\ \quad m_i = m}}  c_i 
\alpha_{i}
 u_q(\lambda_i)  h^{3/2} \frac{(-\imath)^\ell}{4\pi} w_sY_\ell^m(\gamma_{s,t}) 
 e^{\imath x_j \cdot \rho_q \gamma_{s,t}}  .
\end{equation}
We have
\begin{align*}
|f_j - f'_j| &\le \left( \sum_{q=0}^{Q-1}  \sum_{s=0}^{S}\sum_{t=0}^S\left(
 \sum_{\ell=0}^{L} \sum_{m=-\ell}^{\ell}\|\delta_{\ell, m}^\texttt{in}\| _{\ell^\infty}\frac{1}{4\pi}|w_s Y_\ell^m(\gamma_{s,t})| \right) + |\delta^{\texttt{fsh}}_{qst}| \right) +
|\delta_j^\texttt{nuf}| \\ 
&\le \frac{1}{\sqrt{4\pi}}\left( \sum_{q=0}^{Q-1} \pi^2 (3/2)^{1/4} \varepsilon^\texttt{in} \|\alpha\|_{\ell^1} \right) \\
&\qquad + \sum_{q=0}^{Q-1}\left( \varepsilon^\texttt{fsh} \pi^2(3/2)^{1/4}\sum_{i=1}^n|u_q(\lambda_i)\alpha_{i}| + \varepsilon^{\texttt{in}}\varepsilon^\texttt{fsh}\|\alpha\|_{\ell^1} \pi^2 (3/2)^{1/4} \right) \\
&\qquad + \frac{\varepsilon^\texttt{nuf}}{\sqrt{4\pi}} \pi^2 (3/2)^{1/4} \left( \left( 2 + \frac{\pi}{2} \log Q \right)  +  1\right)\left( 1+ \varepsilon^{\texttt{fsh}}\sqrt{4\pi}\right) \|\alpha\|_{\ell^1}
 \\
 &\le \frac{1}{\sqrt{4\pi}}Q \pi^2 (3/2)^{1/4} \varepsilon^\texttt{in} \|\alpha\|_{\ell^1} \\
&\qquad +  \varepsilon^\texttt{fsh} \pi^2(3/2)^{1/4}(2+\frac{\pi}{2}\log Q) \|\alpha \|_{\ell^1} + Q\varepsilon^{\texttt{in}}\varepsilon^\texttt{fsh}\|\alpha\|_{\ell^1} \pi^2 (3/2)^{1/4}   \\
&\qquad + \frac{\varepsilon^\texttt{nuf}}{\sqrt{4\pi}} \pi^2 (3/2)^{1/4} \left( \left( 2 + \frac{\pi}{2} \log Q \right)  +  1\right)\left( 1+ \varepsilon^{\texttt{fsh}}\sqrt{4\pi}\right) \|\alpha\|_{\ell^1}
 \\
&\le \pi^2 (3/2)^{1/4}
\left(  \frac{\varepsilon^\texttt{in} Q}{\sqrt{4\pi}} + \varepsilon^{\texttt{fsh}}(3+\frac{\pi}{2}\log Q) +
\frac{2\varepsilon^\texttt{nuf}}{\sqrt{4\pi}}  \left( 3 + \frac{\pi}{2} \log Q \right)     \right) \|\alpha\|_{\ell^1},
\end{align*}
where the last equation used $Q\varepsilon^\texttt{in} \leq 1$ and $\sqrt{4\pi}\varepsilon^\texttt{in} \leq 1$, which we will show holds below.
Choosing the parameters
\begin{equation}\label{eq:error_nufft_B_proof}
\varepsilon^\texttt{in} = \frac{1}{4Q}\frac{2\sqrt{\pi}\varepsilon}{ \pi^2 (3/2)^{1/4} } ,  \,\,
\varepsilon^\texttt{fsh} = \frac{1}{4}\frac{\varepsilon/(\pi^2 (3/2)^{1/4}) }{  3 + \frac{\pi}{2} \log Q }, \,\,
 \varepsilon^\texttt{nuf} = \frac{1}{4}\frac{\sqrt{\pi}\varepsilon/(\pi^2 (3/2)^{1/4})}{   3 + \frac{\pi}{2} \log Q } 
\end{equation}
results in
\begin{equation}\label{errp2}
    \|f - f'\|_{\ell^\infty} \le \frac{3\varepsilon}{4} \|\alpha\|_{\ell^1}.
\end{equation}
We lastly relate $f'$ to $B\alpha$. The definition \eqref{eq:def_f_in_proof} gives
\begin{equation}
\begin{split}
f'_j
&= \sum_{q=0}^{Q-1} \sum_{s=0}^{S}\sum_{t=0}^S \sum_{i=1}^n c_i 
\alpha_{i}
 u_q(\lambda_i)  h^{3/2} \frac{(-\imath)^\ell}{4\pi} w_sY_\ell^m(\gamma_{s,t}) 
 e^{\imath x_j \cdot \rho_q \gamma_{s,t}}
\\
&= \sum_{i=1}^n c_i h^{3/2}
\alpha_{i} \sum_{q=0}^{Q-1} \sum_{s=0}^{S}\sum_{t=0}^S  
 u_q(\lambda_i)   \frac{(-\imath)^\ell}{4\pi} w_sY_\ell^m(\gamma_{s,t}) 
 e^{\imath x_j \cdot \rho_q \gamma_{s,t}} \\
&= \sum_{i=1}^m c_i h^{3/2} \alpha_i \left( j_{\ell_i} (r_{x_j} \lambda_i) Y_{\ell_i}^{m_i}(\gamma_{x_j}) + \delta^\texttt{dis}_{i,j} \right) = (B f)_j + \sum_{i=1}^m c_i h^{3/2} \alpha_i \delta^\texttt{dis}_{i,j},
\end{split}
\end{equation}
where the third equality follows from Lemma \ref{lem:dis} and $\delta^\texttt{dis}_{ij}$ is a discretization error that satisfies
$\|\delta^\texttt{dis}\|_{\ell^\infty} \le (3 + \frac{\pi}{2} \log Q)\varepsilon^\texttt{dis}$. This results in
\begin{equation}\label{eq:relate_tildef_andB}
\|f' - B \alpha\|_{\ell^\infty} \le \pi^2 (3/2)^{1/4} (3 + \frac{\pi}{2} \log Q) \varepsilon^\texttt{dis} \|\alpha
\|_{\ell^\infty} .
\end{equation}
Set
\begin{equation} \label{eq:epsdis2}
\varepsilon^\texttt{dis} =  \left(\pi^2 (3/2)^{1/4} \left(3 + \frac{\pi}{2} \log \left\lceil  5.3 V^{1/3}  \right\rceil \right)\right)^{-1} \frac{\varepsilon}{4}.
\end{equation}
Note that $\varepsilon^\texttt{dis} \leq \varepsilon$ so the assumption $5.3V^{1/3} \geq |\log_2 \varepsilon|$ then implies $$Q = \log \lceil \max\{ 5.3 V^{1/3} , \log_2 \varepsilon^\texttt{dis}\}\rceil = \lceil 5.3V^{1/3}\rceil .$$ Combining \eqref{eq:relate_tildef_andB} and \eqref{errp2} gives
$
\|f - B \alpha \|_{\ell^\infty} = \|\widetilde{B} \alpha - B \alpha\|_{\ell^{\infty}}\le \varepsilon \|\alpha\|_{\ell^1} .
$
\end{proof}

\subsection{Bound on computational complexity of Algorithms \ref{algoBH} and \ref{algoB}}\label{sec:complexity_of_BH}

\begin{claim} \label{claim3}
Assume $\lambda \leq 6^{1/3}\pi^{2/3} \lfloor (V^{1/3}+1)/2 \rfloor$ and $\varepsilon >0$ is a user-specified accuracy parameter satisfying $ |\log_2 \varepsilon| \leq 5.3V^{1/3}$. Then Algorithms~\ref{algoBH}--\ref{algoB} involve $\mathcal{O}(V(\log V)^2 + V |\log \varepsilon |^2) $ operations.
\end{claim}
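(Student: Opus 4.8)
The plan is to reduce the claim to a step-by-step cost accounting, after first pinning down the sizes of every parameter that enters Algorithms~\ref{algoBH} and \ref{algoB}. The preliminary task is to show that all discretization sizes are controlled by $V$. From the bandlimit hypothesis $\lambda \le 6^{1/3}\pi^{2/3}\lfloor (V^{1/3}+1)/2\rfloor = \mathcal{O}(V^{1/3})$ together with \eqref{eq:bandlimit_on_L} we get $L,K = \mathcal{O}(V^{1/3})$, and the Weyl-law count of \S\ref{sec:bandlimit} gives $n = \mathcal{O}(\lambda^3) = \mathcal{O}(V)$. Since $|\log_2\varepsilon|\le 5.3V^{1/3}$ and $\varepsilon^{\texttt{dis}}\le\varepsilon$ by \eqref{eq:epsdis} and \eqref{eq:epsdis2}, the maxima defining the constants $Q$ and $S$ are attained by their $V^{1/3}$ terms, so $Q=\mathcal{O}(V^{1/3})$ and $S=\mathcal{O}(V^{1/3})$; in particular the number of quadrature points is $Q(S+1)S=\mathcal{O}(V)$. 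Finally, each tolerance in \eqref{eq:nufst} and \eqref{eq:error_nufft_B_proof} differs from $\varepsilon$ by a factor that is at most polynomial in $Q=\mathcal{O}(V^{1/3})$, so $|\log\varepsilon^{\texttt{nuf}}|,\,|\log\varepsilon^{\texttt{fsh}}|,\,|\log\varepsilon^{\texttt{in}}| = \mathcal{O}(|\log\varepsilon|+\log V)$.

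With these bounds I would cost the three steps as follows. Step~1 is a NUFFT between the $V$ Cartesian voxels and the $\mathcal{O}(V)$ spherical-grid frequencies $\rho_q\gamma_{s,t}$; by the NUFFT complexity \cite{barnett2019parallel} it costs $\mathcal{O}(V\log V)$ for the FFT plus a gridding cost that is linear in the number of nonuniform points and polynomial in $|\log\varepsilon^{\texttt{nuf}}|$, contributing $\mathcal{O}(V\log V + V\,|\log\varepsilon^{\texttt{nuf}}|^2)$. Step~2 performs, for each of the $Q$ radial nodes, one fast spherical harmonic transform of bandlimit $L$ on the $(S+1)\times S$ grid; invoking an asymptotically fast transform \cite{keiner2008fast,keiner2009using} of cost $\mathcal{O}(L^2(\log L)^2)$ at accuracy $\varepsilon^{\texttt{fsh}}$ and summing over the $Q$ slices gives $\mathcal{O}(QL^2(\log L)^2)=\mathcal{O}(V(\log V)^2)$, which is precisely the origin of the $(\log V)^2$ term. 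Step~3 interpolates, for each of the $\mathcal{O}(L^2)$ pairs $(\ell,m)$, from the $Q$ Chebyshev nodes onto the relevant $\lambda_{\ell k}$; since fast interpolation \cite{dutt1996} costs $\mathcal{O}((Q+P_\ell)|\log\varepsilon^{\texttt{in}}|)$ per pair and the target counts satisfy $\sum_{\ell,m}P_\ell = n$, the total is $\mathcal{O}((L^2Q+n)|\log\varepsilon^{\texttt{in}}|)=\mathcal{O}(V\,|\log\varepsilon^{\texttt{in}}|)$, a lower-order contribution. The application of $\widetilde{B}$ in Algorithm~\ref{algoB} is the reverse composition of the adjoints of these three operations, each of which has the same complexity as its forward counterpart, so it obeys the same bound.

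Summing the three contributions and substituting $|\log\varepsilon^{\bullet}|=\mathcal{O}(|\log\varepsilon|+\log V)$, the elementary inequality $(|\log\varepsilon|+\log V)^2 = \mathcal{O}(|\log\varepsilon|^2+(\log V)^2)$ collapses every tolerance-dependent factor into the target expression, yielding $\mathcal{O}(V(\log V)^2 + V\,|\log\varepsilon|^2)$. I expect the \emph{main obstacle} to be the bookkeeping in this last step: one must (i) confirm that the nonuniform transforms of Steps~1 and~3 genuinely have gridding cost scaling as $|\log\varepsilon^{\texttt{nuf}}|^2$ per unit of $V$ for the NUFFT variant used, rather than a higher power, and (ii) invoke a fast spherical harmonic transform whose structural $\mathcal{O}(L^2(\log L)^2)$ cost stays within budget once multiplied by the $Q=\mathcal{O}(V^{1/3})$ radial slices and whose accuracy-dependent cost is absorbed by the same collapse. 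Checking that the $\log\log V$ corrections entering the tolerances are harmless and that $n=\mathcal{O}(V)$ keeps the interpolation linear then completes the argument.
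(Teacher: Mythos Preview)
Your proposal is correct and follows essentially the same approach as the paper: both pin down $L,K,Q,S=\mathcal{O}(V^{1/3})$ and $n=\mathcal{O}(V)$ from the bandlimit hypothesis, then cost the NUFFT, fast spherical harmonic transform, and fast interpolation steps separately, citing the same references for each. Your handling of the tolerance collapse via $|\log\varepsilon^{\bullet}|=\mathcal{O}(|\log\varepsilon|+\log V)$ and $(|\log\varepsilon|+\log V)^2=\mathcal{O}(|\log\varepsilon|^2+(\log V)^2)$ is in fact slightly more explicit than the paper's, which absorbs the $\log\log Q$ correction more tersely; otherwise the arguments coincide.
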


\begin{proof}[Proof of Claim~\ref{claim3}]

The NUFFT with $n$ source points and $m$ target points in $\mathbb{R}^d$ and a precision parameter $0 \leq \varepsilon \leq 1$ takes $\mathcal{O} ( n\log n+m\left( \log \frac{1}{\varepsilon} \right)^d)$
operations to achieve $\ell^1$-$\ell^\infty$ relative error $\varepsilon$ (see \cite[Eq.~(9)]{barnett2021aliasing} and \cite[Sec.~1.1]{barnett2019parallel}). For both Algorithms \ref{algoBH} and \ref{algoB},
the number of source points is $V$ and the number of target points is $S(S+1) Q = \mathcal{O}(V)$. In \eqref{eq:nufst} and \eqref{eq:error_nufft_B_proof}, both algorithms set the precision to $\varepsilon^\texttt{nuf} = \mathcal{O}(\varepsilon/\log Q)$. The computational complexity is therefore $\mathcal{O} (V\log V + V|\log \varepsilon^\texttt{nuf} |^2 ) = \mathcal{O} (V\log V + V|\log \varepsilon - \log \log Q |^2 )=\mathcal{O}(V\log V + V|\log \varepsilon |^2)$
operations.

Algorithms \ref{algoBH} and \ref{algoB} use the fast spherical harmonics transforms defined in \eqref{eq:def_sph_transform1} and \eqref{eq:def_sph_transform2}. Each algorithm performs $Q = \mathcal{O}(V^{1/3})$ applications of these transforms, where each application has $\mathcal{O}(S^2)$ grid points and $\mathcal{O}(L^2)$ basis coefficients. With $\varepsilon^\texttt{fsh}$ from \eqref{eq:nufst} and \eqref{eq:error_nufft_B_proof}, the computations therefore use $\mathcal{O} (V(\log V)^2 + V |\log \varepsilon^{\texttt{fsh}}|^2) = \mathcal{O} (V(\log V)^2 + V |\log \varepsilon|^2)$ operations (see \cite{potts2003fast,keiner2008fast,keiner2009using}).

 For fixed $
 \ell , m$, the algorithms perform polynomial interpolation from the $Q = \mathcal{O}(V^{1/3})$ source points to the target points $\lambda_{\ell k}$, where $1 \leq k \leq K$. There are many fast methods for performing interpolation \cite{Dutt1993,Greengard2004,lee2005type,dutt1996,gimbutas2020fast,plonka2018numerical}. Using the lowest complexity method of these \cite{dutt1996}, the cost of each interpolation to $\ell^1$-$\ell^\infty$ relative precision $\varepsilon^\texttt{in}$ is $\mathcal{O}( V^{1/3} \log V + K |\log \varepsilon^\texttt{in}|)$. By \eqref{eq:bandlimit_on_L} we have $L,K = \mathcal{O}(V^{1/3})$. There are therefore fewer than $L(2L+1) = \mathcal{O}(V^{2/3})$ interpolation problems, which gives a total complexity of $\mathcal{O}(V \log V + V |\log \varepsilon^\texttt{in}|)$.
By \eqref{eq:nufst} and \eqref{eq:error_nufft_B_proof},  the interpolation step therefore uses $\mathcal{O}(V \log V + V |\log \varepsilon|)$ operations.

All three steps of the algorithm have computational complexity $\mathcal{O}(V(\log V)^2 + V |\log \varepsilon|^2)$, which concludes the proof of Theorem~\ref{thm:main}.
\end{proof}

\subsection{Auxiliary Lemmas}\label{sec:technical_lemmas}

\begin{lemma} \label{lem:dis}
Define $S$ and $Q$ as in Lemma \ref{lem:num_angular_nodes} and  \ref{lem:num_radial_nodes} with accuracy parameter $\eta > 0$. Then for any $i \in \{ 1, \ldots , n\}$, and $j\in \{1, \ldots , V\}$
$$
\left| \sum_{q=0}^{Q-1} 
\sum_{s=0}^S\sum_{t=0}^{S-1}  e^{-\imath x_j \cdot \rho_{q}\gamma_{s,t}}  \frac{\imath^\ell w_s}{4\pi} \overline{Y_{\ell_i}^{m_i}(\gamma_{s,t})} 
u_q(\lambda_i) - j_\ell(r_{x_j} \lambda_i)\overline{Y^m_\ell(\gamma_{x_j})} \right| \le   \left(3 + \frac{\pi}{2}\log Q\right)\eta,
$$
\end{lemma}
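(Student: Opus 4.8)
The plan is to split the total discretization error into two pieces --- the angular quadrature error and the radial interpolation error --- each already controlled by a preceding lemma. Write $\ell = \ell_i$ and $m = m_i$ for brevity, and set $g(\rho) = j_\ell(r_{x_j}\rho)\overline{Y_\ell^m(\gamma_{x_j})}$, so that the target value subtracted on the right-hand side is precisely $g(\lambda_i)$. Since $u_q(\lambda_i)$ depends only on $q$, I would first factor the triple sum as
$$
\sum_{q=0}^{Q-1} u_q(\lambda_i)\,\widetilde\beta(\rho_q), \qquad \widetilde\beta(\rho_q) := \frac{\imath^\ell}{4\pi}\sum_{s=0}^{S}\sum_{t=0}^{S-1} w_s\, e^{-\imath x_j \cdot \rho_q \gamma_{s,t}}\,\overline{Y_\ell^m(\gamma_{s,t})},
$$
recognizing $\widetilde\beta(\rho_q)$ as exactly the angular quadrature sum appearing in Lemma~\ref{lem:num_angular_nodes}.

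Next I would introduce the degree-$(Q-1)$ interpolant $P(\rho) = \sum_{q=0}^{Q-1} g(\rho_q)\,u_q(\rho)$ through the \emph{exact} values $g(\rho_q)$ at the Chebyshev nodes; this is precisely the polynomial $P_{\ell,m,j}$ of Lemma~\ref{lem:num_radial_nodes}. Adding and subtracting $P(\lambda_i) = \sum_q g(\rho_q) u_q(\lambda_i)$, the error telescopes as
$$
\sum_{q=0}^{Q-1} u_q(\lambda_i)\,\widetilde\beta(\rho_q) - g(\lambda_i) = \underbrace{\sum_{q=0}^{Q-1}\bigl(\widetilde\beta(\rho_q)-g(\rho_q)\bigr)u_q(\lambda_i)}_{\text{angular}} + \underbrace{\bigl(P(\lambda_i)-g(\lambda_i)\bigr)}_{\text{radial}}.
$$

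For the radial term, since $\lambda_i \in [\lambda_1,\lambda_n]$, Lemma~\ref{lem:num_radial_nodes} gives $|P(\lambda_i)-g(\lambda_i)| \le \eta$ directly. For the angular term, each Chebyshev node $\rho_q$ lies in $[\lambda_1,\lambda_n]$, so Lemma~\ref{lem:num_angular_nodes} yields $|\widetilde\beta(\rho_q)-g(\rho_q)| = R_{\ell,m,j}(\rho_q) \le \eta$; pulling this bound out of the sum leaves $\eta\sum_{q=0}^{Q-1}|u_q(\lambda_i)|$. The last ingredient is the Lebesgue-constant bound $\sum_{q=0}^{Q-1}|u_q(\lambda_i)| \le 2 + \frac{\pi}{2}\log Q$ for Chebyshev interpolation, which is exactly \cite[Eq.~(11)]{rokhlin1988fast} and is already invoked elsewhere in this paper. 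Combining the two contributions gives $\bigl(2+\frac{\pi}{2}\log Q\bigr)\eta + \eta = \bigl(3+\frac{\pi}{2}\log Q\bigr)\eta$, as claimed.

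The argument is essentially a clean triangle inequality, so I do not anticipate a genuine obstacle; the only points requiring care are the bookkeeping that identifies $\widetilde\beta(\rho_q)$ with the quadrature quantity of Lemma~\ref{lem:num_angular_nodes} and $P$ with the interpolant of Lemma~\ref{lem:num_radial_nodes}, and verifying that both lemmas apply at the relevant arguments (all nodes $\rho_q$ and the point $\lambda_i$ lie in $[\lambda_1,\lambda_n]$, while the hypotheses on $S$ and $Q$ are inherited verbatim). One should also confirm that the phase $\imath^\ell$ is carried consistently between the sum and the definition of $R_{\ell,m,j}$, which it is.
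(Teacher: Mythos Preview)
Your proposal is correct and follows essentially the same approach as the paper: both split the error into an angular quadrature piece handled by Lemma~\ref{lem:num_angular_nodes} and a radial interpolation piece handled by Lemma~\ref{lem:num_radial_nodes}, then bound the angular sum via the Lebesgue-constant estimate $\sum_q |u_q(\lambda_i)| \le 2 + \tfrac{\pi}{2}\log Q$ from \cite[Eq.~(11)]{rokhlin1988fast}. The paper's version simply phrases the decomposition by writing the inner sums as $g(\rho_q)+\delta^{\texttt{ang}}_{qij}$ and then the interpolation as $g(\lambda_i)+\delta^{\texttt{rad}}_{ij}$, which is your add-and-subtract argument in different notation.
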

\begin{proof}
Putting together Lemma~\ref{lem:num_angular_nodes} and \ref{lem:num_radial_nodes}, we have
\begin{align*}
 \sum_{q=0}^{Q-1} 
\sum_{s=0}^S\sum_{t=0}^{S-1}&  e^{-\imath x_j \cdot \rho_{q}\gamma_{s,t}}  w_s \overline{Y_{\ell_i}^{m_i}(\gamma_{s,t})} 
u_q(\lambda_i) = \sum_{q=0}^{Q-1} \left( j_\ell(r_{x_j} \rho_q)\overline{Y^{m_i}_{\ell_i}(\gamma_{x_j})} + \delta_{q i j}^\texttt{ang}  \right) u_q(\lambda_i)  \\
&= j_\ell(r_{x_j} \lambda_i)\overline{Y^{m_i}_{\ell_i}(\gamma_{x_j})} + \delta_{ij}^\texttt{rad} + \sum_{q=0}^{Q-1} \delta_{q i j}^\texttt{ang} u_q(\lambda_i),
\end{align*}
where $\delta_{q i j}^\texttt{ang}$ and $\delta^\texttt{rad}_{ij}$ are the approximation errors from Lemma~\ref{lem:num_angular_nodes} and \ref{lem:num_radial_nodes}, respectively. By the choice of $Q$ and $S$, these approximation errors are controlled by $| \delta_{q i j}^\texttt{ang} | \leq \eta$ and $| \delta^\texttt{rad}_{ij} | \leq \eta$. Bounding the term $\sum_{q=0}^Q |u_q(\lambda_i)|$ as in \cite[Eq.~(11)]{rokhlin1988fast}, 
$$
|\delta_{ij}^\texttt{rad} + \sum_{q=0}^{Q-1} \delta_{q i j}^\texttt{ang} u_q(\lambda_i)| \le \eta + \eta(2 + \frac{\pi}{2} \log Q ),
$$
which finishes the proof.
\end{proof}

\begin{lemma}\label{lem:cj}
If $\lambda_{\ell k} \le 6^{1/3}\pi^{2/3}h^{-1}$, then the normalization constants $c_{\ell k}$ in \eqref{eq:sph_bessel1} satisfy $c_{\ell k} \le \pi^2 (3/2)^{1/4}h^{-3/2}$.
\end{lemma}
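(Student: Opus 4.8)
The plan is to reduce the claim to a lower bound on the normalization integral, and then to a lower bound on a single spherical Bessel value. By orthonormality of the $\psi_{k,\ell,m}$ in $L^2(\mathbb{B})$ together with $\int_{\mathbb{S}^2}|Y_\ell^m|^2\,\dd\sigma = 1$ from \eqref{eq:orthogonality_sph_harm}, the constant satisfies
\begin{equation*}
c_{\ell k}^{-2} = \int_0^1 j_\ell(\lambda_{\ell k} r)^2 r^2 \, \dd r = \tfrac12 \, j_{\ell+1}(\lambda_{\ell k})^2,
\end{equation*}
where the first equality is exactly the orthogonality relation invoked in the proof of Theorem~\ref{thm:main_identity}, and the second is the standard closed form for the spherical Bessel normalization integral at a root of $j_\ell$ (see \cite{dlmf}). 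Writing the hypothesis as $\lambda_{\ell k}^{3/2} \le \sqrt 6\,\pi\,h^{-3/2}$, a short computation shows that the target $c_{\ell k} \le \pi^2(3/2)^{1/4}h^{-3/2}$ follows once we establish the scale-free estimate $c_{\ell k} \le \pi\,2^{-3/4}3^{-1/4}\lambda_{\ell k}^{3/2}$, i.e.\ once $|j_{\ell+1}(\lambda_{\ell k})|$ is bounded below appropriately. So the whole claim rests on showing that $j_{\ell+1}$ cannot be too small at the $k$-th root of $j_\ell$.

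The crux is this lower bound, and I would obtain it by passing to cylindrical Bessel functions. Set $\nu = \ell + 1/2$, so that $\lambda_{\ell k} = j_{\nu,k}$ is the $k$-th positive zero of $J_\nu$ and $j_{\ell+1}(x) = \sqrt{\pi/(2x)}\,J_{\nu+1}(x)$. At a zero of $J_\nu$ the recurrences give $J_{\nu+1}(j_{\nu,k}) = -J_\nu'(j_{\nu,k})$, and the Wronskian $J_\nu Y_\nu' - J_\nu' Y_\nu = 2/(\pi x)$ evaluated at $j_{\nu,k}$ yields $|J_\nu'(j_{\nu,k})| = 2/(\pi j_{\nu,k} M_\nu(j_{\nu,k}))$, where $M_\nu(x)^2 = J_\nu(x)^2 + Y_\nu(x)^2$ is the Bessel modulus function and $M_\nu(j_{\nu,k}) = |Y_\nu(j_{\nu,k})|$ since $J_\nu(j_{\nu,k}) = 0$. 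Substituting back produces the clean identity
\begin{equation*}
c_{\ell k}^2 = \pi\,\lambda_{\ell k}^3\, M_\nu(\lambda_{\ell k})^2 ,
\end{equation*}
so an upper bound on $c_{\ell k}$ is precisely an upper bound on the modulus $M_\nu$ at the root.

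Here I would invoke the classical bound $M_\nu(x)^2 \le 2/\big(\pi\sqrt{x^2 - \nu^2 + 1/4}\big)$, valid for $\nu \ge 1/2$ and $x \ge \sqrt{\nu^2 - 1/4}$ (see \cite[\S 10.18]{dlmf}; it is an equality at $\nu = 1/2$). Since $\nu^2 - 1/4 = \ell(\ell+1)$, this gives $c_{\ell k}^2 \le 2\lambda_{\ell k}^3 / \sqrt{\lambda_{\ell k}^2 - \ell(\ell+1)}$. The root bound $\lambda_{\ell k} > \ell + 1/2 + k\pi - \pi/2 + 1/2 \ge \ell + 1 + \pi/2$ from \cite{elbert2001some} (recalled before \eqref{eq:bandlimit_on_L}) guarantees both that $\lambda_{\ell k} > \sqrt{\ell(\ell+1)}$, so the modulus bound applies, and that $\lambda_{\ell k}^2 - \ell(\ell+1) \ge (1+\pi/2)^2 \ge 96/\pi^4$, whence $\sqrt{\lambda_{\ell k}^2 - \ell(\ell+1)} \ge 4\sqrt6/\pi^2$. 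Combining, $c_{\ell k}^2 \le \pi^2 \lambda_{\ell k}^3/(2\sqrt6)$, i.e.\ $c_{\ell k} \le \pi\,2^{-3/4}3^{-1/4}\lambda_{\ell k}^{3/2}$, and substituting $\lambda_{\ell k}^{3/2}\le \sqrt6\,\pi\,h^{-3/2}$ yields $c_{\ell k}\le \pi^2(3/2)^{1/4}h^{-3/2}$, as claimed.

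The main obstacle is the uniform (in both $\ell$ and $k$) lower bound on $|J_\nu'|$ at the zeros of $J_\nu$; this is what forces the detour through the modulus function $M_\nu$, since a direct estimate of the normalization integral over the first hump would require a case analysis that is hard to make uniform in $\ell$. The only delicate points are checking that the root bound places $\lambda_{\ell k}$ in the range where the $M_\nu^2$ estimate holds, and tracking the numerical constants so that $(1+\pi/2)^2$ comfortably dominates $96/\pi^4 \approx 0.99$; both become routine once the identity $c_{\ell k}^2 = \pi\lambda_{\ell k}^3 M_\nu(\lambda_{\ell k})^2$ is in hand.
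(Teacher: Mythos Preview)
Your approach is essentially the same as the paper's: both reduce to a lower bound on $|J_{\ell+1/2}'(\lambda_{\ell k})|$, then invoke the Elbert root bound $\lambda_{\ell k} > \ell + 1 + \pi/2$ (the paper rounds this to $\ell + 2.5$) to control $\lambda_{\ell k}^2 - \nu^2$, and finally substitute the bandlimit hypothesis; the paper simply cites the 2D analogue \cite[Lemma~A.4]{marshall2023fast} for the $|J_\nu'|$ bound, whereas you unpack it through the Wronskian identity $c_{\ell k}^2 = \pi\lambda_{\ell k}^3 M_\nu(\lambda_{\ell k})^2$ and a modulus estimate. One small caution: the precise inequality $M_\nu(x)^2 \le 2/\big(\pi\sqrt{x^2-\nu^2+1/4}\big)$ you quote is sharper than the standard form and than what \S10.18 of \cite{dlmf} states directly, but your argument does not actually need the ``$+1/4$'' refinement---with the ordinary bound $M_\nu(x)^2 \le 2/\big(\pi\sqrt{x^2-\nu^2}\big)$ (or even the paper's weaker implicit version $M_\nu^2 \le \pi/(2\sqrt{x^2-\nu^2})$) you still have $\sqrt{\lambda_{\ell k}^2 - (\ell+1/2)^2} \ge \sqrt{6} \gg 4\sqrt{6}/\pi^2$, which is all that is required.
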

\begin{proof}[Proof of Lemma \ref{lem:cj}]
By the same argument as in \cite[Lemma A.4]{marshall2023fast}, we obtain
$$
J_{\ell+1/2}'(\lambda_{\ell k})^2 \ge \left(\frac{2}{\pi \lambda_{\ell k}} \right)^2 \frac{2 \sqrt{\lambda_{\ell k}^2 - (\ell+1/2)^2}}{\pi}.
$$
Since $\lambda_{\ell k} > \ell + 1/2 + k \pi - \pi/2 + 1/2 > \ell + 2.5$ for $(\ell,k) \in \mathbb{Z}_{\ge 0} \times \mathbb{Z}_{>0}$ (see \cite[Eq. 1.6]{elbert2001some}), we have
$\sqrt{\lambda_{\ell k}^2 - (\ell + 1/2)^2} > \sqrt{ 6 }$. Using this and the assumption $\lambda_{\ell k} \le 6^{1/3}\pi^{2/3}h^{-1}$ yields
$$
c_{\ell k} = \frac{2\sqrt{\lambda_{\ell k}}}{\pi^{1/2} |J_{\ell + 1/2}'(\lambda_{\ell k})|} \le \frac{\pi \lambda_{\ell k}^{3/2}}{2^{1/2} 6^{1/4}} \le \pi^2 (3/2)^{1/4}h^{-3/2},
$$ 
as announced.
\end{proof}

\section{Conventions}  \label{notation} 
 This appendix specifies the notational conventions used in the article and includes background material to make the main paper self-contained.  Since conventions for special functions and operators used in this paper vary across mathematical communities, care is taken to avoid any potential ambiguity.

\subsection{Spherical coordinates}  \label{sec:sphericalcoor}
Suppose that $f : \mathbb{R}^3 \rightarrow \mathbb{C}$ and $x = (x_1,x_2,x_3)$ are Cartesian coordinates. Let  $(r,\theta,\phi)$ be spherical coordinates  defined by
\begin{align}\label{eq:def_spherical_coords}
\left\{
\begin{array}{l}
x_1 = r \sin \theta \cos \phi, \\ 
x_2 = r \sin \theta \sin \phi, \\
x_3 = r \cos \theta,
\end{array} \right.
\end{align}
where $r \in [0,\infty)$, $\theta \in [0,\pi]$, and $\phi \in [0,2\pi)$. 
More concisely, we can define $\gamma \in \mathbb{S}^2 = \{ x \in \mathbb{R}^3 : |x| = 1\}$ by
\begin{equation}\label{eq:def_spherical_gamma}
\gamma = (\sin \theta \cos \phi, \sin \theta \sin \phi, \cos \theta),
\end{equation}
and write $x = r \gamma$. When necessary to avoid ambiguity, we write $r_x,\theta_x,\phi_x,\gamma_x$ to denote the spherical coordinates defined in \eqref{eq:def_spherical_coords} and 
\eqref{eq:def_spherical_gamma} corresponding to $x \in \mathbb{R}^3$.
The integral of $f$ is expressed in spherical coordinates as
$$
\int_{\mathbb{R}^3} f(x)\dd x  = 
\int_0^{2\pi} \int_0^\pi \int_0^\infty f(r, \theta,\phi) r^2 \sin \theta \dd r \dd \theta \dd\phi
= \int_{\mathbb{S}^2} \int_0^\infty f(r \gamma) r^2\dd r\dd\sigma(\gamma),
$$
where $\dd\sigma(\gamma)$ is the surface element on $\mathbb{S}^2$. 
The Laplacian $\Delta = \partial_{x_1 x_1} + \partial_{x_2 x_2} + \partial_{x_3 x_3}$ is expressed in spherical coordinates by
\begin{equation} \label{lap}
\begin{split}
\Delta &=  \partial_{rr} + \frac{2}{r} \partial_r + \frac{1}{r^2} \left( \frac{1}{\sin\theta}\partial_\theta \Big(\sin \theta \partial_\theta\Big) + \frac{1}{(\sin \theta)^2}\partial_{\phi \phi} \right) \\
&= \partial_{rr} + \frac{2}{r} \partial_r + \frac{1}{r^2} \Delta_{\mathbb{S}^2},
\end{split}
\end{equation}
where $\Delta_{\mathbb{S}^2}$ is the Laplace-Beltrami operator on the sphere $\mathbb{S}^2= \{ x \in \mathbb{R}^3 : \| x \| = 1\}$.
We define the Fourier-transform of an integrable function $f:\mathbb{R}^3 \rightarrow \mathbb{C}$ using the convention
\begin{equation}
    \widehat{f}(\omega) = \int_{\mathbb{R}^3} f(x)e^{-\imath x\cdot \omega}\dd x .
\end{equation}
With this convention, the inverse Fourier transform is given by
$$
f(x) = \frac{1}{(2\pi)^{3}} \int_{\mathbb{R}^3} \widehat{f}(\omega) e^{i x \cdot \omega}\dd\omega,
$$
when $f$ is sufficiently nice, for example when $f$ is in Schwartz space $\mathcal{S}(\mathbb{R}^3)$.

\subsection{Spherical Bessel functions} \label{sec:spherebessel} Let $j_\ell$ denote the $\ell$-th order spherical Bessel function of the first kind. We have
\begin{equation} \label{jnJN}
j_\ell(r) = \left(\frac{\pi}{2 r} \right)^{1/2} J_{\ell+1/2}(r),
\end{equation}
where $J_q$ is the $q$-th order Bessel function of the first kind, see \cite[Eq. 10.47.3]{dlmf}. Further, $j_\ell$ satisfies the differential equation
\begin{equation} \label{jndiffeq}
- \left( \partial_{rr} + \frac{2}{r}\partial_{r} - \frac{\ell (\ell+1)}{r^2} \right) j_\ell(r) = j_\ell(r),
\end{equation}
see \cite[10.47.1]{dlmf}.

\subsection{Spherical harmonics} \label{sec:sphereharmonic} Let $Y_{\ell}^m(\theta,\phi)$ be (normalized) spherical harmonics defined by
\begin{equation}\label{eq:def_Ylm}
    Y_{\ell}^m(\theta,\phi) = \left( \frac{(\ell - m)! (2 \ell + 1)}{4 \pi ( \ell + m)!} \right)^{\! 1/2} e^{i m \phi} P_\ell^m(\cos \theta),
\end{equation}
for  $m \in \{-\ell,\ldots,\ell\}$ and $\ell \in \mathbb{Z}_{\ge 0}$, see \cite[Eq. 14.30.1]{dlmf}. 
Here, $P_\ell^m$ denotes the 
associated Legendre function of the first kind, see \cite[Eq. 14.3.6]{dlmf}. With the above normalization, the spherical harmonics satisfy the identity
\begin{equation}\label{eq:orthogonality_sph_harm}
    \int_0^{2\pi} \int_0^{\pi} Y_{\ell_1}^{m_1}(\theta,\phi) \overline{Y_{\ell_2}^{m_2}(\theta,\phi)} \sin \theta \dd \theta \dd \phi = \delta_{\ell_1,\ell_2} \delta_{m_1,m_2},
\end{equation}
where $\delta_{\ell_1,\ell_2} = 1$ if $\ell_1 = \ell_2$ and $0$ if $\ell_1 \not = \ell_2$, see \cite[Eq. 14.30.8]{dlmf}. Additionally, the $Y_\ell^m$ satisfy
$$
\sum_{m=-\ell}^{\ell} |Y_\ell^m(\theta, \phi)|^2 = \frac{2\ell+1}{4\pi},
$$
which is known as Uns\"{o}ld's identity \cite[Eq.~(4.37)]{chirikjian2016harmonic}  and follows from the addition theorem 
$$
P_\ell(\gamma_1 \cdot \gamma_2) = \frac{4\pi}{2\ell+1} \sum_{m=-\ell}^{\ell} Y_\ell^m(\gamma_1)\overline{Y_\ell^m(\gamma_2)},
$$
for $\gamma_1, \gamma_2 \in \mathbb{S}^2$.
 See also  \cite[Corollary~IV.2.9b]{stein1971introduction}. In particular, it follows that
for all $\theta$ and $\phi$ \begin{equation}\label{eq:unsold_bound}
    |Y_\ell^m(\theta, \phi)| \leq \sqrt{\frac{2\ell + 1}{4\pi}}, \quad \text{ for any }  \ell, m  \, \text{ with }   -\ell \leq m \leq \ell.
\end{equation}
Further, spherical harmonics satisfy the differential equation
\begin{equation} \label{eqy}
-\left( \frac{1}{\sin\theta}\partial_\theta \Big(\sin \theta \partial_\theta\Big) + \frac{1}{(\sin \theta)^2}\partial_{\phi \phi} \right) Y_{\ell}^m(\theta,\phi) = \ell(\ell+1) Y_{\ell}^m(\theta,\phi),
\end{equation}
see \cite[Eq. 14.30.10]{dlmf}, 
which can be written concisely as
\begin{equation}
   -\Delta_{\mathbb{S}^2} Y_\ell^m = \ell(\ell+1) Y_\ell^m. 
\end{equation}
In other words, the functions $Y_{\ell}^m$ are eigenfunctions of the operator $-\Delta_{\mathbb{S}^2}$ with eigenvalue $\ell(\ell+1)$ for 
 $\ell,m \in \mathbb{Z}$ such that $|m| \le \ell$.

{
 \begin{remark}[Real-valued spherical harmonics]
 Real-valued spherical harmonics $\tilde{Y}_\ell^m: \mathbb{S}^2 \rightarrow \mathbb{R}$ can be obtained by taking linear combinations of $Y_{\ell}^m$ as follows:
 \begin{equation}\label{eq:def_real_Ylm}
 \tilde{Y}_\ell^m   =
 \begin{cases}
     \frac{i}{\sqrt{2}}\left(Y^{-|m|}_\ell - (-1)^m Y^{|m|}_\ell \right) &\quad \text{ if } m < 0 \\
     Y^0_\ell & \quad \text{ if } m = 0 \\
     \frac{1}{\sqrt{2}}\left(Y^{-|m|}_\ell + (-1)^m Y^{|m|}_\ell \right) & \quad \text{ if } m > 0.
 \end{cases}
 \end{equation}
 Indeed, the fact that $\tilde{Y}_\ell^m$ is real-valued follows from the definition \eqref{eq:def_Ylm} of $Y_\ell^m$   and the
 identity $$
 P_\ell^{-m}(x) = (-1)^m \frac{(\ell - m)!}{(\ell + m)!} P_\ell^{m}(x);$$ see \cite[Eq.~(14.9.3)]{dlmf}.  The functions $\tilde{Y}_\ell^m$ are orthonormal on $\mathbb{S}^2$.  
\end{remark}
}
\subsection{Ball harmonics} \label{sec:lapeigedirdef}
The harmonics on the unit ball $\mathbb{B} = \{x \in \mathbb{R}^3 : \|x\| < 1\}$ in $\mathbb{R}^3$ are eigenfunctions of the Dirichlet Laplacian, i.e., they solve the eigenproblem
\begin{equation}
\left\{
\begin{array}{cl}
-\Delta\psi = \lambda \psi &  \text{ in } \mathbb{B}, \\[5pt]
\psi = 0   & \text{ on } \partial \mathbb{B},
\end{array} \right.
\end{equation}
where $\partial \mathbb{B} = \mathbb{S}^2$ denotes the boundary of $\mathbb{B}$. The eigenfunctions of the Laplacian form an orthonormal basis of square-integrable functions on $\mathbb{B}$ and can be expressed by
\begin{equation} \label{eigenfundef}
\psi_{k, \ell, m}(r, \theta , \phi) = c_{\ell k}j_\ell(\lambda_{\ell k} r)  Y_{\ell}^m(\theta,\phi)\chi_{[0,1)}(r),
\end{equation}
for $ m \in \{-\ell,\ldots,\ell\}$, $ \ell \in \mathbb{Z}_{\ge 0}$, and $ k \in \mathbb{Z}_{>0}$,
where $c_{\ell k}$ are positive normalization constants ensuring $\|\psi_{k, \ell, m}\|_{L^2} =1 $, 
and the constant $\lambda_{\ell k}$ is the $k$-th positive root of $j_\ell$; see for example \cite[\S 3.3]{grebenkov2013}. The inclusion of the indicator function $\chi_{[0,1)}(x)$ for $[0,1)$ extends the domain of the ball harmonics to $\mathbb{R}^3$ by setting the value equal to zero outside of $\mathbb{B}$.

Note that by \eqref{jnJN} the constant $\lambda_{\ell k}$ can equivalently be defined as the $k$-th positive root of $J_{\ell+1/2}$ by \eqref{jnJN}. Moreover, it holds
$$
-\Delta \psi_{k, \ell, m} = \lambda_{\ell k}^2 \psi_{k, \ell, m},
$$
so that $\lambda_{\ell k}^2$ is the eigenvalue associated with the eigenfunctions $\psi_{k, \ell, m}$ of the operator $-\Delta$. Indeed, this is straightforward to see by writing the Laplacian in spherical coordinates: 
$$
\Delta = \partial_{rr} + \frac{2}{r} \partial_r + \frac{1}{r^2} \Delta_{\mathbb{S}^2},
$$
where $\Delta_{\mathbb{S}^2}$ is the Laplace Beltrami operator on the sphere. This gives
\begin{equation}
\begin{split}
    \Delta \psi_{ k, \ell, m} &= \lambda_{\ell k}^2 j_\ell''(\lambda_{ \ell k} r) Y_{\ell}^m + \frac{2 \lambda_{\ell k}}{r} j_\ell'(\lambda_{ \ell k} r) Y_{\ell}^m + 
j_{\ell}(\lambda_{\ell k}r) \frac{1}{r^2} \Delta_{\mathbb{S}^2}Y_{\ell}^m \\
& = \left(\lambda_{\ell k}^2 j_\ell''(\lambda_{\ell k} r)  + \frac{2 \lambda_{ \ell k}}{r} j_\ell'(\lambda_{\ell k} r) - \frac{\ell(\ell+1)}{r^2}j_\ell(\lambda_{\ell k}r) \right) Y_{\ell}^m(\theta, \phi) \\
&  = - \lambda_{\ell k}^2 j_\ell(\lambda_{\ell k}r) Y_{\ell}^m = - \lambda_{ \ell k}^2 \psi_{ k, \ell, m},
\end{split}
\end{equation}
where the final equality follows from \eqref{jndiffeq}, and $f' := \partial_r f$ and $f'' := \partial_{rr} f$.
Finally, we define the normalization constants $c_{\ell k}$ for completeness. We have
\begin{equation}
\begin{split}
\|\psi_{k, \ell, m}\|_{L^2}^2 
&=  c_{\ell k}^2\int_0^{2\pi} \int_0^\pi \int_0^1 j_\ell( \lambda_{\ell k}r)^2 |Y_{\ell}^m(\theta,\phi)|^2 r^2 \sin \theta \dd r \dd \theta \dd \phi \\ 
&=  c_{\ell k}^2\int_0^1 j_\ell(\lambda_{\ell k}r)^2  r^2 \dd r 
=  c_{\ell k}^2 \int_0^1 \frac{\pi}{2\lambda_{\ell k}}  J_{\ell+1/2}(\lambda_{\ell k}r)^2 r \dd r \\ 
&= \frac{\pi c_{\ell k}^2}{4\lambda_{\ell k}} (J_{\ell+1/2}'(\lambda_{\ell k}))^2,
\end{split}
\end{equation}
where the final equality follows from \cite[Eq. 10.22.37]{dlmf}.  Thus,
\begin{equation}\label{eq:def_clk}
c_{\ell k} = \frac{2 \sqrt{\lambda_{\ell k}}}{\pi^{1/2} |J_{\ell+1/2}'(\lambda_{\ell k})|}.
\end{equation}

\begin{remark}[Real-valued ball harmonics]
Note that while the ball harmonics $\psi_{k,\ell,m}$ are complex-valued, orthonormal real-valued analogs $\tilde{\psi}_{k,\ell,m}$ can be obtained by defining
 \begin{equation}\label{eq:def_real_psiklm}
 \tilde{\psi}_{k, \ell, m}   =
 \begin{cases}
     \frac{i}{\sqrt{2}}\left(\psi_{k,\ell,-|m|} - (-1)^m \psi_{k,\ell,|m|} \right) &\quad \text{ if } m < 0 \\
     \psi_{k,\ell,0} & \quad \text{ if } m = 0 \\
     \frac{1}{\sqrt{2}}\left(\psi_{k,\ell,-|m|} + (-1)^m \psi_{k,\ell,|m|} \right) & \quad \text{ if } m > 0;
 \end{cases}
 \end{equation}
 the fact that $\tilde{\psi}_{k,\ell,m}$ is real-valued follows from \eqref{eq:def_real_Ylm}.
 \end{remark}

\subsection{Quadrature on $\mathbb{S}^2$} \label{Clenshaw-Curtis-sec}
We define a quadrature rule on the sphere that is exact for integrals corresponding to inner products of functions that have bandlimited spherical harmonic expansions.
We define quadrature nodes $\gamma_{s,t} \in \mathbb{S}^2$ by
\begin{equation} \label{eq:prodquad}
\gamma_{s,t} = (\sin \theta_s \cos \phi_t, \sin \theta_s \sin \phi_t, \cos \theta_s),
\end{equation}
where
\begin{equation}\label{eq:angular_grid}
(\theta_s,\phi_t ) := \left( \frac{\pi s}{S}, \frac{2\pi t}{S} \right) \quad \text{for} \quad (s,t) \in \{ 0, \ldots, S \} \times \{ 0, \ldots ,  S-1 \}.
\end{equation}
We define quadrature weights $w_s$ equal to the Clenshaw-Curtis quadrature with respect to $\theta_s$ and uniform quadrature with respect to $\phi_t$. The quadrature weights $w_s$ are given by
\begin{equation}\label{eq:clenshaw_curtis_weights}
    w_s := \frac{4\pi\varepsilon_s}{S^2} \sum_{u=0}^{\lfloor S/2 \rfloor} \frac{2 \varepsilon_u}{1-4u^2 }\cos \left(\frac{2\pi su }{S}\right),
    \quad \text{for} \quad s \in \{0,\ldots,S\},
\end{equation}
 where $\varepsilon_0 = \varepsilon_S = 1/2 $ and  $\varepsilon_u = 1$ otherwise, see \cite{plonka2018numerical, waldvogel2006fast}. We say that $f : \mathbb{S}^2 \rightarrow \mathbb{C}$ is bandlimited on the sphere if
\begin{equation} \label{bandlimited-on-sphere}
f(\gamma) = 
\sum_{\ell=0}^L
\sum_{m=-\ell}^\ell
\alpha_{\ell,m}
Y_{\ell}^{m}(\gamma),
\end{equation}
for some coefficients $\alpha_{\ell,m}$. Suppose that $f,g : \mathbb{S}^2 \rightarrow \mathbb{C}$ are bandlimited on the sphere in the sense of
\eqref{bandlimited-on-sphere} for $L \le S/2$. Then,
\begin{equation}  \label{clenshaw-curtis-equality}
\sum_{s=0}^S \sum_{t = 0}^{S-1} f(\gamma_{s,t}) 
\overline{g(\gamma_{s,t})} w_s = \int_{\mathbb{S}^2} f(\gamma) \overline{g(\gamma)} \dd\sigma(\gamma),
\end{equation}
see \cite[Theorem~9.60]{plonka2018numerical} (see also \cite{driscoll1994computing,potts1998fast}). 
We note that the quadrature weights $w_s$ satisfy the bound
\begin{equation}\label{eq:bound_ws}
\begin{split}
  |w_s| &\leq \frac{4\pi}{S^2} \left( 1 + \sum_{u=1}^\infty \frac{2}{4u^2-1} \right) = \frac{8\pi}{S^2}, \quad \text{for} \quad s \in \{0,\ldots,S\},
  \end{split}
\end{equation}
where the equality follows from the fact that the sum telescopes. We also note that $w_s \geq 0$ holds, since
\begin{equation}\label{eq:ws_pos}
    w_s \geq \frac{4\pi}{S^2} \left(1- \sum_{u=1}\frac{2}{4u^2-1} \right)  = 0.
\end{equation}

{
\begin{remark}[Choice of quadrature]
In related works,  different authors use different quadrature points $\gamma_{s,t}$ and weights $w_{s,t}$ on the sphere. 
For example, another choice is Gaussian quadrature nodes in the variable $t$,
see \cite{RokhlinTygert2006}. This paper assumes the product Clenshaw-Curtis quadrature defined in \S \ref{Clenshaw-Curtis-sec} above for the sake of definiteness. However, straightforward modifications to the theory and implemented code could be performed to extend our results to other quadrature choices.
\end{remark}
}

\subsection{Spherical harmonic transforms} \label{sec:spherehartransform}
We use two types of spherical harmonics transforms
between coefficients $ (\beta_{\ell, m})$ and function values $(a_{s,t})$, where  $(\ell,m)$ range over 
\begin{equation} \label{eq:ellmrange}
(\ell,m) \in \{ (\ell,m) \in \mathbb{Z}^2 : 0 \le \ell \le L, \text{ and } -\ell \le m \le \ell\}, 
\end{equation}
and $(s,t)$ range over
\begin{equation} \label{eq:strange}
(s,t) \in \{0,\ldots,S\} \times \{0,\ldots,S-1\}.
\end{equation}
The first kind of  spherical harmonics transform  maps coefficients $(\beta_{\ell, m})$ to function values $(a_{s,t})$ via
\begin{equation}\label{eq:def_sph_transform1}
(\beta_{\ell, m}) \mapsto  a_{s,t} := \sum_{\ell =0 }^L \sum_{m = -\ell}^{\ell} \beta_{\ell, m} Y_\ell^m (\gamma_{s,t}) , 
\end{equation}
where $\gamma_{s,t}$ are defined in 
\eqref{eq:angular_grid}. The second kind of spherical harmonics transform maps function values $(a_{s,t})$ to coefficients by $(\tilde{\beta}_{\ell, m})$ via
\begin{equation}\label{eq:def_sph_transform2}
(a_{s,t }) \mapsto \tilde{\beta}_{\ell, m} := \sum_{s=0}^S \sum_{t=0}^{S-1} a_{s,t} \overline{Y_\ell^m (\gamma_{s,t})} w_{s}, 
\end{equation}
where $w_{s}$ are the quadrature weights defined in \eqref{eq:clenshaw_curtis_weights}.

\subsection{Fast spherical harmonic transforms} \label{sec:fastsphehartransform}
If the number of coefficients described by the index sets
\eqref{eq:ellmrange} and \eqref{eq:strange} are both order $N$, then directly computing the transforms $(\beta_{\ell, m}) \mapsto (a_{s,t})$ and $(a_{s,t}) \mapsto 
(\tilde{\beta}_{\ell, m})$ defined by
\eqref{eq:def_sph_transform1}
and \eqref{eq:def_sph_transform2} would require order $N^2$ operations. Fast spherical harmonic transforms like 
\cite{RokhlinTygert2006,slevinsky2019fast} compute these transforms in 
$\mathcal{O}(N (\log N)^2)$ operations. We note that in 
practice, however, there are $\mathcal{O}(N^{3/2} \log N)$ implementations  \cite{bonev2023spherical} that have better running time constants and thus are more practical for small to moderate input sizes. The code accompanying our paper allows the user to choose between asymptotically fast \cite{slevinsky2019fast} and asymptotically slow (but in practice fast) \cite{bonev2023spherical} spherical harmonics transforms at runtime.

\end{appendix}

\end{document}